\documentclass[11pt]{amsart}   	
\usepackage{geometry}             
\usepackage[dvipsnames]{xcolor}   		             		
\usepackage{graphicx}				
\usepackage{amssymb,mathrsfs}
\usepackage{amsthm,amsmath,stmaryrd}
\usepackage{tikz}
\usepackage{tikz-cd}
\usepackage{accents,upgreek,enumerate}
\usepackage{booktabs}
\usepackage[headings]{fullpage}
\usepackage{bm}
\usepackage{mathtools}
\usepackage[all]{xy}
\usepackage{caption}
\usepackage{standalone}
\usepackage[euler-digits]{eulervm}
\usepackage[normalem]{ulem}
\usepackage{dirtytalk}
\usepackage{float}

\usepackage{setspace}
\setstretch{1.05}

\tikzset{
  commutative diagrams/.cd, 
  arrow style=tikz, 
  diagrams={>=stealth}
}

\tikzset{every picture/.style={line width=0.75pt}} 

\usetikzlibrary{decorations.markings}
\tikzset{arrow/.style={
        decoration={markings,
            mark= at position 0.5 with {\arrow[scale=0.8]{>}} ,
        },
        postaction={decorate}
    }
}


\newenvironment{customthm}[1]
  {\innercustomthm}
  {\endinnercustomthm}

  \makeatletter
\def\@tocline#1#2#3#4#5#6#7{\relax
  \ifnum #1>\c@tocdepth 
  \else
    \par \addpenalty\@secpenalty\addvspace{#2}%
    \begingroup \hyphenpenalty\@M
    \@ifempty{#4}{%
      \@tempdima\csname r@tocindent\number#1\endcsname\relax
    }{%
      \@tempdima#4\relax
    }%
    \parindent\z@ \leftskip#3\relax \advance\leftskip\@tempdima\relax
    \rightskip\@pnumwidth plus4em \parfillskip-\@pnumwidth
    #5\leavevmode\hskip-\@tempdima
      \ifcase #1
       \or\or \hskip 1em \or \hskip 2em \else \hskip 3em \fi%
      #6\nobreak\relax
    \dotfill\hbox to\@pnumwidth{\@tocpagenum{#7}}\par
    \nobreak
    \endgroup
  \fi}
\makeatother

\usetikzlibrary{calc}
\usetikzlibrary{fadings}
\usetikzlibrary{decorations.pathmorphing}
\usetikzlibrary{decorations.pathreplacing}

\newcounter{marginnote}
\setcounter{marginnote}{0}

\setlength{\parskip}{4pt}

\DeclareMathAlphabet{\mathpzc}{OT1}{pzc}{m}{it}

\usepackage[backref=page]{hyperref}
\hypersetup{
  colorlinks   = true,          
  urlcolor     = violet,          
  linkcolor    = violet,          
  citecolor   = blue             
}

\newtheorem{theorem}{Theorem}[subsection]
\newtheorem{corollary}[theorem]{Corollary}
\newtheorem{lemma}[theorem]{Lemma}
\newtheorem*{lemma*}{Lemma}
\newtheorem{proposition}[theorem]{Proposition}
\newtheorem*{proposition*}{Proposition}

\newtheorem{quasi-theorem}[theorem]{Quasi-Theorem}

\theoremstyle{definition}
\newtheorem{definition}[theorem]{Definition}

\newtheorem{remark}[theorem]{Remark}

\newtheorem{construction}[theorem]{Construction}

\newtheorem{blank remark}[theorem]{}

\newtheorem{not1}[theorem]{Notation}


\newcommand{\PP}{\mathbb{P}}         
\newcommand{\QQ} {{\mathbb Q}}		
\newcommand{\RR} {{\mathbb R}}		
\newcommand{\ZZ} {{\mathbb Z}}		



\def\setminus{\smallsetminus}

\newcommand{\Hom}{\operatorname{Hom}}

\DeclareMathOperator{\spec}{Spec}

\DeclareMathOperator{\Star}{Star}





\newcommand{\cal}{\mathcal}

\def\cM{{\cal M}}






\newcommand{\Mbar}{\overline{\cM}\vphantom{\cM}}

\def\trop{\operatorname{trop}}

\newcommand{\Spec}{\operatorname{Spec}}






\makeatletter
\def\blfootnote{\xdef\@thefnmark{}\@footnotetext}
\makeatother

\title{Combinatorics of Hurwitz degenerations and tropical realizability}

\date{}

\author{Mia Lam, Chi Kin Ng, {\it \&} Dhruv Ranganathan}

\address{{\bf Mia Lam}\newline School of Mathematics\newline University of Edinburgh, Edinburgh, UK}
\email{\href{mailto:Mia.Lam@ed.ac.uk}{Mia.Lam@ed.ac.uk}}

\address{{\bf Dhruv Ranganathan} \newline Department of Pure Mathematics {\it \&} Mathematical Statistics\newline
University of Cambridge, Cambridge, UK}
\email{\href{mailto:dr508@cam.ac.uk}{dr508@cam.ac.uk}}

\address{{\bf Chi Kin Ng} \newline Mathematics Department \newline
Northwestern University, Evanston, IL, USA}
\email{\href{mailto:ching2029@u.northwestern.edu}{ching2029@u.northwestern.edu}}

\begin{document}

\begin{abstract}
    We investigate the realizability of balanced functions on tropical curves, establishing new sufficient criteria for superabundant functions on genus two curves, analogous to the well-spacedness condition in genus one. We find that realizability is sensitive to the precise locations of conjugate and Weierstrass points on the tropical curve. The key input is a combinatorial comparison of semistable limit theorems for maps of curves. Amini--Baker--Brugall\'e--Rabinoff previously showed that realizability of functions is equivalent to \emph{modifiability} to a tropical admissible cover. While the resulting criteria are typically inexplicit, we develop combinatorial techniques to derive explicit, verifiable conditions. We further introduce a dimensional reduction technique to deduce statements about maps to $\mathbb{R}^r$ from corresponding statements about maps to $\mathbb{R}$. By proving directly that modifiability and well-spacedness are equivalent in genus one, we obtain a new proof that well-spaced maps are realizable. Along the way, we explain how the modifiability criterion can be interpreted as a comparison result for properness statements in moduli spaces of relative maps and admissible covers.
\end{abstract}

\maketitle

\setcounter{tocdepth}{1}
\tableofcontents

\section*{Introduction}
In this paper, we use the combinatorics of degenerations of Hurwitz covers to study the tropical realizability problem for maps from tropical curves. We establish new realizability criteria for curves of genus two and provide new proofs of several known realizability theorems using these techniques.  

\subsection{The problem} 
The tropical inverse problem is a fundamental question in tropical geometry: when does a synthetically defined tropical object arise from algebraic geometry? A key instance concerns the realizability of \emph{parameterized tropical curves}, see~\cite{BPR16,CFPU,Mi03,Ni09,Ni15,NS06}. Fix an abstract tropical curve $\Gamma$ of genus $g$ and a balanced map
\[
F\colon \Gamma \to \mathbb{R}^r.
\]
We say that $F$ is \emph{realizable} if there exists a smooth curve $\mathcal{C}$ of genus $g$, defined over a non-archimedean field, and a rational map
\[
\varphi\colon \mathcal{C} \dashrightarrow \mathbb{G}_m^r,
\]
such that the tropicalization of $\varphi$ equals $F$. The relevant basic notions of tropical geometry are recalled in Section~\ref{sec: modifications}. In this paper, we focus on the case where all vertex genera are equal to $0$. 

\subsection{State of the art} 
To contextualize our results, we explain how the complexity of the realizability problem grows with the genus $g$ and the target dimension $r$. When $g = 0$, for any value of $r$, all balanced tropical maps are realizable~\cite{NS06}. More generally, an explicit combinatorial condition called \emph{non-superabundance} is sufficient for realizability for all values of $g$ and $r$~\cite{CFPU}. The (non-)superabundance condition depends only on the combinatorics of the map: the underlying graph and the directions of edges under the map.  

However, \emph{superabundant maps} are expected to be generic objects, and understanding their realizability lies at the heart of the tropical inverse problem. Superabundant phenomena appear for each value of $(g,r)$ starting at $(1,1)$. For a superabundant configuration, additional conditions on the edge lengths of the tropical curve are typically required to guarantee realizability. The basic goal is to identify these additional \emph{realizability conditions}.  

Once a superabundant configuration appears, it ``propagates'' to all higher genera and target dimensions: smaller genus graphs appear as subgraphs of larger genus graphs, and maps to lower-dimensional targets appear as projections of maps to higher-dimensional ones. For each value of $(g,r)$, new phenomena arise that are not inherited from lower genus or lower dimension, see~\cite{Koy23}. Realizability conditions can also be propagated into higher genus and higher dimensions via the methods of~\cite{JR17}, which we further develop here.  

The first ``atomic'' superabundance phenomenon occurs at $(1,1)$, and its propagation explains all genus-one superabundance. Realizability in this case is characterized by a well-known condition discovered by Speyer, called \emph{well-spacedness}, see~\cite{R16,RSW17B,Sp07}. Speyer's work has implications for the enumerative geometry of elliptic curves~\cite{LR15,RSW17B}, and the higher-genus propagation of well-spacedness has applications in Brill–Noether theory~\cite{JR17}.  

In genus two and higher, prior to this paper, there had been no example of a condition guaranteeing the realizability of superabundant tropical curves, except those arising from genus one by propagation. A reason for this lack of progress is that existing proofs in genus one rely on features of elliptic curves—such as non-archimedean uniformization or the classification of elliptic singularities—that do not have straightforward generalizations to higher genus~\cite{RSW17B,Sp07}.

\subsection{Main results} The contributions of this paper are as follows:

\begin{enumerate}[(i)]
\item Qualitatively new conditions guaranteeing the realizability of balanced functions $\Gamma\to\RR$ on tropical curves of genus $2$ with contracted genus two subgraphs. 
\item A new proof that a balanced function $\Gamma\to\RR$ on a tropical curve of genus $1$ is realizable if and only if it is well-spaced. 
\item A new proof that non-superabundant functions $\Gamma\to\RR$ are realizable in any genus. 
\end{enumerate}

In Section~\ref{sec: bootstrap}, we develop a new dimensional induction that allows us to deduce partial results about maps to $\mathbb R^r$ using the natural projections to $\RR$. Consequently, results (ii) and (iii) imply the full well-spacedness result in all dimensions for genus one~\cite{RSW17B,Sp07}, as well as the corresponding result for non-superabundant maps in all dimensions~\cite{CFPU}. This approach also realizes many superabundant maps to $\RR^r$ in genus two.  

The geometric input is that the Hurwitz space of covers of $\mathbb P^1$ admits two natural moduli-theoretic compactifications: the space of logarithmic (or relative) stable maps~\cite{Che10,Li01} and the space of admissible covers~\cite{ACV,HM82}. We analyze the combinatorics of the natural comparison map in key base cases, and deduce the main results by combining these local analyses with global moduli arguments.

\subsubsection{Genus two} We work in characteristic $0$ for simplicity. An abstract tropical curve $\Gamma$ of {\it type $\Theta$} is a genus two abstract tropical curve that contains a subgraph homeomorphic to a $\Theta$ graph. Such a subgraph is necessarily unique; we call it the {\it core} and denote it by $\Theta$. The three edges of $\Theta$ will be called the {\it edges of the $\Theta$}.  

Consider a piecewise linear function $F\colon \Gamma \to \RR$ that is constant of value $0$ on the core. In the connected component of $F^{-1}(0)$ containing $\Theta$, the points that are incident to non-contracted edges in $\Gamma$ are called {\it critical points}. There is a unique path from each critical point to $\Theta$, which we call a {\it critical path}. The intersection of a critical path with $\Theta$ is called its {\it location}.

\begin{customthm}{A}\label{thm: genus-2-A}
Let $F\colon \Gamma\to\RR$ be a balanced function on an abstract tropical curve of $\Theta$ type, and assume that $F$ contracts the core. If 
\begin{enumerate}[(i)]
\item for each edge $E$ of $\Theta$, there exists a critical path of minimal length whose location is on $E$, and 
\item the lengths of all other critical paths are sufficiently large compared to the minimal length,
\end{enumerate}
then $F$ is realizable. 
\end{customthm}

In Figure~\ref{fig: example-C}, we show a balanced function of this kind. Except for the non-constructive nature of (ii), the result above is exactly parallel to the $r = 1$ case of the well-spacedness condition.

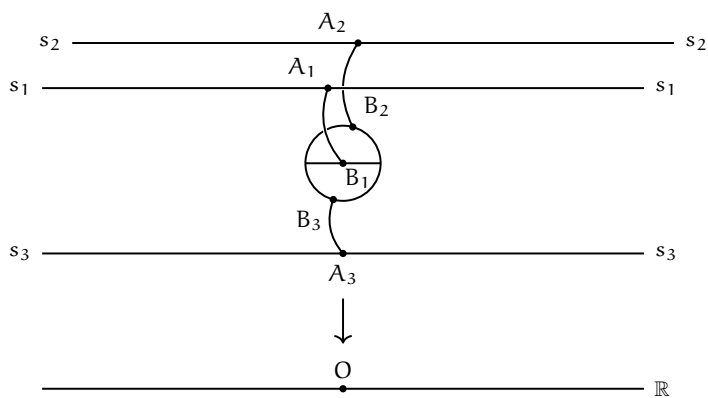
\begin{figure}[h!]
\begin{tikzpicture}
\draw (0.2,1.6) to [bend right] (75:0.5);
\draw[white,ultra thick] (-4,1) -- (4,1);
\draw (-4,1) node[anchor=east,font=\scriptsize] {$s_1$}  -- (4,1) node[anchor=west,font=\scriptsize] {$s_1$};
\draw (-3.6,1.6) node[anchor=east,font=\scriptsize] {$s_2$} -- (4.4,1.6) node[anchor=west,font=\scriptsize] {$s_2$};
\draw (0,0) circle (0.5);
\draw (-4,-1.2) node[anchor=east,font=\scriptsize] {$s_3$} -- (4,-1.2) node[anchor=west,font=\scriptsize] {$s_3$};
\draw[white,ultra thick] (-0.2,1) to [bend right] (0,0);
\draw (-0.2,1) to [bend right] (0,0);
\draw (255:0.5) to [bend right] (0,-1.2);
\draw (0.5,0) -- (-0.5,0);
\filldraw[black] (-0.2,1) circle (1pt) node[anchor=south east,font=\footnotesize] {$A_1$};
\filldraw[black] (0.2,1.6) circle (1pt) node[anchor=south east,font=\footnotesize] {$A_2$};
\filldraw[black] (0,-1.2) circle (1pt) node[anchor=north,font=\footnotesize] {$A_3$};
\filldraw[black] (0,0) circle (1pt);
\node[font=\footnotesize] at (0.2,-0.2) {$B_1$};
\filldraw[black] (75:0.5) circle (1pt) node[anchor=south west,font=\footnotesize] {$B_2$};
\filldraw[black] (255:0.5) circle (1pt) node[anchor=north east,font=\footnotesize] {$B_3$};
\draw[->] (0,-1.8) -- (0,-2.4);
\draw (-4,-3) -- (4,-3) node[anchor=west,font=\scriptsize] {$\mathbb{R}$};
\filldraw[black] (0,-3) circle (1pt) node[anchor=south,font=\footnotesize] {$O$};
\end{tikzpicture}
\caption{A balanced map with a contracted $\Theta$ and three attaching points. It is realizable provided the lengths of the three segments $A_iB_i$ are all equal.}\label{fig: example-C}
\end{figure}

Our next result is qualitatively different from the genus $1$ case. Observe that the $\Theta$ graph admits a unique involution whose quotient is a tree, given by flipping each edge. Pairs of points that are exchanged by the involution are {\it conjugate}. 

\begin{customthm}{B}\label{thm: genus-2-B}
Let $F\colon \Gamma\to\RR$ be a balanced function on an abstract tropical curve of $\Theta$ type, and assume that $F$ contracts the core. If 
\begin{enumerate}[(i)]
\item there exists an edge of $\Theta$ and two critical paths of minimal length whose locations are conjugate, and
\item the lengths of all other critical paths are sufficiently large compared to the minimal length, 
\end{enumerate}
then $F$ is realizable. 
\end{customthm}

In Figure~\ref{fig: example-D}, we depict an example of a balanced function of this kind. In Proposition~\ref{prop: genus-2-Weierstrass} we show an analogous result where the Weierstrass points, i.e. fixed points of the hyperelliptic involution, play a crucial role. 

The moduli of maps from $\Theta$ curves to $\RR$ that contract the core has excess dimension $2$ compared to the expected dimension, and so one expects realizability to be a codimension $2$ condition. In these two theorems, we see that the two conditions can arise from both the locations of critical paths and their lengths. 

In the main text, we include additional such results, including when the core is a dumbbell rather than $\Theta$. We focus on the combinatorics of the $\Theta$ here, largely for expository reasons. 

\begin{figure}[h!]
\includegraphics{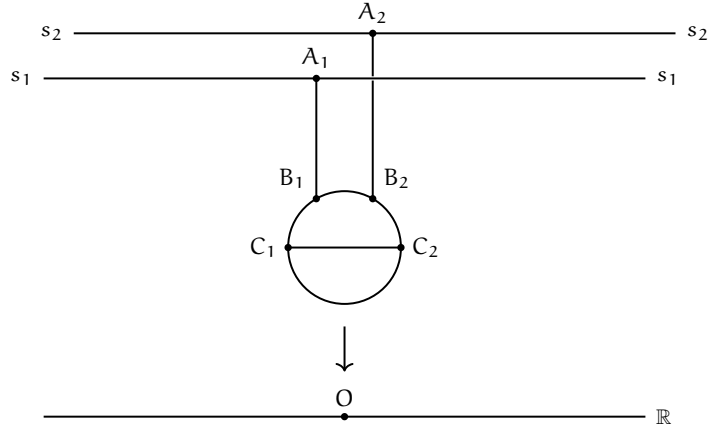}
\caption{A balanced map with a contracted $\Theta$ and two attaching points that are conjugate. }\label{fig: example-D}
\end{figure}

\subsubsection{Hurwitz modifiability and combinatorial rederivations} Let $F\colon \Gamma\to \RR$ be a balanced function. It is known by work of Amini--Baker--Brugall\'e--Rabinoff that the realizability of $F$ is equivalent to the existence of, what we dub here, an {\it $H$-modification}~\cite{ABBR1,ABBR2}. In Section~\ref{sec: modifications}, we re-explain this using the geometry of relative maps and admissible covers. Either way, the upshot is that realizability is equivalent to the existence of a commutative diagram
\[
\begin{tikzcd}
\hat \Gamma\arrow[swap]{d}{\hat F}\arrow{r} &\Gamma\arrow{d}{F}\\
\hat T\arrow{r} & \RR,
\end{tikzcd}
\]
where the horizontal arrows are tree attachments, the map $\hat F$ is finite and preserves balanced functions under pullback, and at every vertex, the local Hurwitz problem has a solution. 

The $H$-modifiability of a balanced function might seem like a basic tool for studying realizability, but both the graph-theoretic aspects and the Hurwitz problem are of high combinatorial complexity, and few general techniques apply to either. One motivation for this paper was to investigate whether $H$-modifiability can be used in practice. The answer appears to be positive.  

We now examine what these techniques imply for the existing body of work on tropical realizability~\cite{CFPU,Mi03,NS06,Sp07}. The methods used in these results are very different from $H$-modifiability. Approaching the problem from the perspective of $H$-modifiability, however, leads naturally to nontrivial combinatorial statements. For example: {\it (i) every balanced function on a trivalent tropical curve that does not contract a cycle is $H$-modifiable,} and {\it (ii) well-spacedness for a balanced function on a trivalent genus-$1$ tropical curve is equivalent to $H$-modifiability.}

Our second main contribution is to provide direct proofs of these combinatorial statements.  

\begin{customthm}{C}
Let $\Gamma$ be an abstract, trivalent tropical curve, and let $F\colon \Gamma \to \RR$ be a balanced map. Assume that $F$ does not contract a cycle. Then $F$ is $H$-modifiable. As a consequence, such maps are realizable.
\end{customthm}

This result reproduces a theorem of Cheung--Fantini--Park--Ulirsch~\cite{CFPU} specialized to target dimension $1$, and, with the dimensional reduction technique developed in Section~\ref{sec: bootstrap}, provides a new proof of their results.  

We now turn to genus $1$. Speyer's ``well-spacedness'' condition~\cite{Sp07} gives a combinatorial criterion for realizability. In the trivalent case, it asserts that a genus-$1$ tropical curve $\Gamma$ with a balanced map $F\colon \Gamma \to \RR$ that contracts the core is realizable if and only if there are at least two critical paths of minimal length.  

\begin{customthm}{D}\label{thm: well-spacedness}
Let $\Gamma$ be an abstract, trivalent, genus-$1$ tropical curve, and let $F\colon \Gamma \to \RR$ be a balanced map that contracts a cycle. Then $F$ is $H$-modifiable if and only if $F$ is well-spaced. Consequently, such a map is realizable if and only if it is well-spaced. 
\end{customthm}

We discuss the non-trivalent case in the main text. The result provides an independent proof of Speyer's realizability theorem~\cite[Theorem~3.3]{Sp07} for balanced functions. The bootstrapping technique of Section~\ref{sec: bootstrap} implies the higher dimensional case from the above statement. 

\subsection{Outlook and techniques} 
Our approach to realizability is more elementary than earlier ones, which rely on heavier technical machinery such as Tate's non-archimedean uniformization for elliptic curves and the residue theory of elliptic singularities. These techniques have not yet been extended to higher genera, but our results hint at the subtleties involved in this. The cost of our method is high combinatorial complexity: finding appropriate tropical modifications and verifying the relevant Hurwitz conditions is challenging in practice, and much of the paper is devoted to these combinatorial constructions. Once such techniques are developed, it seems potentially fruitful to combine them with other viewpoints.  

One promising direction is the connection to the singularity theory of curves. Our methods are particularly effective for producing examples, whereas the geometry of Gorenstein curves used in~\cite{RSW17B} seems better suited for proving statements once they have been conjectured. We expect that progress in genus $2$ and beyond will involve revisiting the genus-$2$ results using Abel--Jacobi maps of Gorenstein curves of genus $2$, following Battistella--Carocci~\cite{BC23}.\footnote{During the conference {\it GeNeSys} in Belalp in September 2024, Carocci sketched why these results are consistent with the perspective from genus-$2$ curve singularities.}

We use some general algebro-geometric techniques -- essentially ``pure thought'' arguments -- and we mention them here in case they may be useful. First, while we use results on modifiability~\cite{ABBR2}, we explain this as a comparison between two moduli spaces of maps from curves. The basic object for us is a map $C\to\PP^1$ over a non-archimedean field. We view this as a point in the Hurwitz space and produce tropical pictures from the combinatorics of limits in two different compact moduli spaces: the admissible cover space~\cite{ACV,HM82} and the logarithmic stable map space~\cite{Che10,Li01}:
\[
\mathsf{AdmCovers}\supset\mathsf{HurwCovers}\subset\mathsf{LogStMaps}. 
\]
The tropical realization problem is a type of {\it smoothing} problem for degenerate maps. But $\mathsf{AdmCovers}$ is logarithmically smooth, so this problem always has a solution. Therefore, a smoothable point of $\mathsf{LogStMaps}$ must be ``promotable'' to a point of $\mathsf{AdmCovers}$, and this is the modifiability condition. It suggests a general picture: the interacting combinatorics of different semistable reduction theorems create constraints on tropical realizability. 

Second, as mentioned above, we explain how to deduce results about realizing maps to higher dimensional tori from maps to lower dimensional tori. This is based on a lifting argument for tropical intersections. In genus $0$, $1$, and in the non-superabundant case, this is sufficient to understand the problem completely. In higher genus, this is not the case, as shown by Koyama's recent results~\cite{Koy23}. The target dimension $2$, genus $2$ situation in complete generality requires new ideas. 

And third, the non-constructive aspect of our genus $2$ results -- the fact that lengths have to be ``very long'' -- seems to provide a useful conclusion that is relatively cheap to prove. It uses the structure of compactified tropicalizations to deduce the existence of points in the interior of the moduli space by constructing points at infinity in the combinatorial picture. 

We conclude with a word about applications. The present results in genus $2$ can be leveraged, using the genus induction of~\cite{JR17}, to prove regeneration theorems for linear series on tropical curves, and we believe this should have applications in Brill--Noether theory. In particular, one can hope to use this to understand the codimension of the Brill--Noether locus when the Brill--Noether number is negative~\cite{JR17,Pfl23}.  

We expect other applications, for example, to the study of fixed complex structure enumerative geometry in genus $2$, following~\cite{LR15,Z03}. In a different direction, Hicks has shown that the realizability of tropical curves has a surprising link, via mirror symmetry, to the construction of Floer theoretically unobstructed Lagrangian submanifolds in the torus~\cite{Hicks22}. For this link to Lagrangian geometry, it is important that the tropical curves are embedded, or at least immersed. Interestingly, this makes the dimensional reduction in Section~\ref{sec: bootstrap} something on the algebraic side that might not be available on the mirror.

\subsection*{Acknowledgements} The question pursued here has its origins in an uncompleted project of R. Cavalieri, H. Markwig, and the third author dating back to 2014, concerning the tropicalization of the double ramification cycle~\cite{CMR14a}. We are grateful for many years of conversations with them on these and related topics. Our thanks also go to D. Jensen for encouraging us to return to the question, as well as to L. Battistella, F. Carocci, and J. Wise for inspiring discussions on realizability and Gorenstein singularities. Finally, we are very grateful to K. Christ, E. Katz, J. Hicks, S. Koyama, S. Payne, I. Tyomkin, and D. Speyer for their helpful conversations. The manuscript was improved thanks to the input of an anonymous referee. 

\noindent
ML was supported by the Philippa Fawcett program. ML and CKN conducted the research here under the Cambridge SRIM program in 2023. DR was supported by the EPSRC New Investigator Grant EP/V051830/1 and the EPSRC Frontier Research Grant EP/Y037162/1.

\section{Tropical covers and modifications}\label{sec: modifications}

We recall the basics of tropical curves and maps between them, following~\cite{ABBR1,ABBR2,CMR14a}.  

\subsection{Tropical curves}\label{sec: tropical-curves} A {\it tropical curve} is a connected, finite metric graph with unbounded rays and a genus labeling at its vertices. Precisely, it is an enhancement of an abstract connected graph $G$ by
\begin{enumerate}[(i)]
\item a length function $\ell\colon E(G)\to\QQ_{>0}$,
\item a finite set of {\it rays} attached to the vertices by $m\colon [n]\to V(G)$, and
\item a genus function $\hat g\colon V(G)\to \mathbb Z_{\geq 0}$.
\end{enumerate}
Given this data, we form a non-compact metric space as follows. Identify the edge $e$ with an interval of length $\ell(e)$ to form a metric graph $\Gamma_0$. We form $\Gamma$ by
\[
\Gamma = \left(\Gamma_0\sqcup \coprod_{i\in[n]}\RR_{\geq 0}\right)/\sim,
\]
where the equivalence relation identifies each $0$ in the disjoint union with the point corresponding to the vertex $m(i)$. The unbounded rays are referred to as {\it rays}. 

Of course, one can also study {\it disconnected} tropical curves. The connected case is better for most of our exposition, so when disconnected curves arise, we explicitly use the phrase ``possibly disconnected''.
We typically work in the case where $\hat g$ is everywhere equal to $0$. This is sometimes referred to as an {\it explicit} tropical curve. The only exception to this is in Section~\ref{sec: tropicalizations}, which discusses tropicalization via semistable reduction. The exposition of this is more natural with an allowance for vertex genera.

Note that it is sometimes natural to allow the edge lengths to take real values, rather than only rational values. However, the space of realizable tropical maps, whose analysis is our main goal, is a {\it rational} polyhedral subcomplex of the moduli space of all tropical maps~\cite{R16}. Realizability results in this more general context, therefore, follow from the results that we discuss here. 

A metric graph $\Gamma$ comes with a distinguished set of {\it piecewise linear functions with integral slopes}, or {\it piecewise linear functions} for short. These are continuous functions
\[
\Gamma\to\RR
\]
that restrict to linear functions with integer slopes on the edges and rays. 

Given two metric graphs $\Gamma'$ and $\Gamma$, we study {\it piecewise linear maps} between them. These are continuous maps $F\colon\Gamma'\to \Gamma$ that induce a graph map, with the further condition that, when restricted to every edge or ray of $\Gamma'$, the map $F$ is linear with an integer slope.   

For ease of terminology, given a vertex $v$ on $\Gamma$, we refer to flags consisting of a vertex $v$ and an edge or ray incident to it as the {\it tangent directions} at $v$. The ``trivial'' flag $(v,v)$ is formally viewed as the $0$-tangent direction. Given a vertex $v$ and an edge or ray $e$ to which it is incident, we refer to the flag $(v,e)$ as the ``direction associated with $e$''. We visualize these as germs of edges or rays starting at $v$, moving in the direction of one of the flags. 

Given a piecewise linear map $F\colon \Gamma'\to \Gamma$, every tangent direction based at $v'$ in $\Gamma'$ maps to a tangent direction based at $F(v')$. It is the $0$ direction when the edge or ray that determines the tangent direction is contracted to $F(v')$. 

A piecewise linear function has a well-defined integral slope along a tangent direction, by convention directed {\it away} from $v$. Given a map $\Gamma'\to\Gamma$, we can measure the slope along any tangent direction in $\Gamma'$. Indeed, a tangent direction of $\Gamma'$ maps to a tangent direction of $\Gamma$, locally as an integer linear map. If $(v',e')$ is a tangent direction on $\Gamma'$, and if $e'$ maps to the vertex $v$, the slope is equal to $0$.

\subsection{Balanced functions and harmonic maps} Piecewise linear functions and piecewise linear maps will arise for us as tropicalizations of meromorphic functions on Riemann surfaces and maps between Riemann surfaces, respectively. However, not all such maps are relevant to geometry. The {\it balancing condition} identifies a subset of such functions. 
\begin{definition}
A piecewise linear function $F\colon \Gamma \to \mathbb{R}$ is called \emph{balanced} or \emph{harmonic} if, at every vertex $v$ of $\Gamma$, the sum of the slopes of $F$ along the tangent directions at $v$ is $0$. 
\end{definition}

Balanced functions are the tropical analogues of rational functions on algebraic curves. The corresponding notion for maps between tropical curves is recorded next.

\begin{definition}
Let $F\colon \Gamma' \to \Gamma$ be a piecewise linear map of tropical curves. Let $v'$ be a vertex of $\Gamma'$ with image $v = F(v')$, and let $e$ be an edge or ray of $\Gamma$ based at $v$. The \emph{local degree of $F$ at $v'$ in the direction of $e$} is the sum of the slopes of $F$ along all tangent directions $(v', e')$ in $\Gamma'$ that map to the direction $(v, e)$.

The map $F$ is said to be \emph{locally of pure degree at $v'$} if, for any edge $e$ based at $v$, this local degree is independent of the choice of $e$. 
\end{definition} 

\begin{definition}
A map $F \colon \Gamma' \to \Gamma$ is \emph{harmonic} if it is surjective and locally of pure degree at every vertex $v'$ of $\Gamma'$. We usually denote this degree by $d_{v'}(F)$, or simply $d_{v'}$ when it is clear from context.
\end{definition}

In other words, given a map $\Gamma' \to \Gamma$, the local degree at a vertex $v'$ of $\Gamma'$ can be measured \emph{after choosing} a tangent direction at its image $v = F(v')$ in $\Gamma$. This measurement is given by the sum of the slopes along all directions in $\Gamma'$ based at $v'$ that map to the chosen direction in $\Gamma$. The \emph{purity} condition asserts that this choice of direction in the target does not affect the local degree. In practice, this is how the condition is used. The harmonic condition asserts that purity holds at every vertex of the domain $\Gamma'$.

The harmonic condition for a map $F\colon \Gamma' \to \Gamma$ can also be described more conceptually though less practically in terms of balanced functions. Specifically, a piecewise linear map is harmonic if and only if, for any open set $U \subset \Gamma$, the pullback of any balanced function $U \to \mathbb{R}$ is again a balanced function $F^{-1}(U) \to \mathbb{R}$. In particular, compositions of harmonic maps are harmonic. See~\cite[Section~2.9]{ABBR1} for further details.

\subsection{Mildness conditions} We introduce two conditions on harmonic maps that allow realizability to be analyzed in a straightforward manner. The discussion here assumes that all vertex genera are equal to $0$. The contents of this section do not require a connectivity hypothesis on the tropical curves. 

\begin{definition}
A harmonic map $F\colon\Gamma'\to \Gamma$ is {\it finite} if the slope of $F$ is nonzero on all edges and rays of $\Gamma'$. 
\end{definition}

Given a finite harmonic map of tropical curves, each vertex of the domain determines potential discrete data for a map between Riemann spheres. We record this as a construction below. 

\begin{construction}
Let $F\colon \Gamma'\to\Gamma$ be a finite harmonic map of tropical curves. Let $v'$ be a vertex of $\Gamma'$, and let $e'_1,\ldots,e'_r$ be the non-zero tangent directions at $v'$. Let $v$ be the image of $v'$ and let $e_1,\ldots,e_s$ be the non-zero tangent directions of $\Gamma$ based at $v$. There is a natural map
\[
\overline F\colon\{e'_1,\ldots,e'_r\}\to \{e_1,\ldots,e_s\}.
\]
The map $F$ induces a linear map from the tangent direction $e'_i$ to $\overline F(e'_i)$. We call the slope of this map $m_i$. 

From these data, we can extract {\it ramification data at $v'$} for a map of Riemann surfaces. Specifically, we may consider the existence of maps
\[
(\PP^1,p_1,\ldots,p_r)\to (\PP^1,q_1,\ldots,q_s),
\]
such that $p_i$ maps to $q_j$ if $\overline F(e'_i) = e_j$, and the ramification order at $p_i$ is $m_i$. We denote the set of maps with these data by $\mathsf R(F,v')$. 
\end{construction}

Given a finite harmonic map $F\colon \Gamma'\to \Gamma$, the set $\mathsf R(F,v')$ may be empty — for example, the conditions on ramification provided by Riemann--Hurwitz might obstruct the existence of such a map. Even if the Riemann--Hurwitz constraints are satisfied, the set of maps may still be empty. 

\begin{definition}\label{def: locally-realizable}
A finite harmonic map $F\colon \Gamma'\to\Gamma$ is {\it locally realizable} if for every vertex $v'$ in $\Gamma'$, the set $\mathsf R(F,v')$ is nonempty. 
\end{definition}

In the next subsection, we introduce tropicalization for maps between curves and the associated inverse problem. Locally realizable finite harmonic maps will turn out always to be realizable.

\subsection{Tropicalization for families of curves, functions, and covers}\label{sec: tropicalizations} We discuss tropicalization for maps between curves. The notion has been treated before: using Berkovich spaces, by Amini, Baker, Brugall\'e, and Rabinoff~\cite{ABBR1,ABBR2}, as well as using admissible covers and logarithmic mapping spaces in~\cite{CMR14a,R16}. All these papers prove much more than we need; however, they require additional technical machinery. We provide an elementary presentation tailored to our applications.

\subsubsection{First for curves} Let $K$ be a discretely valued field with valuation ring $R$ and residue field $\mathbb C$. Let $t\in R$ be a uniformizing parameter and let $\mathsf{val}$ denote the valuation on $K^\times$. 

Consider a smooth marked curve $(C,p_1,\ldots,p_r)$ of genus $g$ over $K$. We will assume that $2g(C)-2+r$ is positive. By the stable reduction theorem, or equivalently by the weak valuative criterion of properness for $\Mbar_{g,r}$, after possibly replacing $K$ with a finite field extension $K\subset K'$, there exists a unique stable model. Specifically, this is a family:
\[
(\mathcal C,p_1,\ldots,p_r)\to \Spec R',
\]
The special fiber $\mathcal C_0$ of this family is a stable, nodal, genus $g$ curve. The map $\Spec R'\to \Spec R$ is a finite surjective map, possibly ramified over its closed point. 

The total space $\mathcal C$ is a toroidal surface, with the boundary given by the special fiber, together with the closure of the $r$ sections. This means that the total space is smooth away from the nodes of $\mathcal C_0$. At the nodes, the space is allowed to have singularities of $A_{n-1}$-type, given locally by
\[
xy = \alpha\cdot s^{n},
\]
where $\alpha$ is a unit in $R'$, and $s$ is a uniformizer in $R'$. 

By convention, we extend the valuation $\mathsf{val}\colon K^\times\to \ZZ\subset \RR$ to $K'$, in such a way that the valuation of the parameter $t$ is still $1$, and the valuation of $s$, the new uniformizer in $R'$, lies in $\mathbb Q$. In fact, it will be equal to $\frac{1}{m}$ where $m$ is the order of the field extension $K\subset K'$. 

The {\it tropicalization} of $(C,p_1,\ldots,p_n)$ is a tropical curve denoted $\Gamma(C)$, $\Gamma(\mathcal C)$, or simply $\Gamma$ when the context is clear, determined as follows:
\begin{itemize}
\item The abstract graph is the dual graph of $\mathcal C_0$, with genus decoration at the vertices.
\item The marking function $[n]\to V(G)$ sends $i$ to the vertex whose associated component contains $p_i$. 
\item Let $e$ be an edge with associated node $q_e$. If $q_e$ is an $A_{k-1}$ singularity in $\mathcal C$, then the length of the edge is $k$. 
\end{itemize}

The cone over this tropical curve $\Gamma$ is a generalized cone complex $\Sigma(\mathcal C)$, see~\cite{ACP}. The valuation on $R'$ gives rise to a map
\[
\Sigma(\mathcal C)\to \RR_{\geq 0}.
\]
Recall that the valuation of $s$ is an element of $\mathbb Q_{\geq 0}$. The fiber of the map above over the point $\mathsf{val}(s)$ is canonically identified with $\Gamma$, and enhances it with the structure of a metric graph with rational edge lengths.

\subsubsection{...then with a function...}\label{sec: trop-rsm} We add a meromorphic function to the picture. Let $(C,p_1,\ldots,p_r)$ be a curve over a discretely valued field $K$. Consider a rational map
\[
\begin{tikzcd}
C \arrow{d}\arrow[dashed]{r}{\varphi} &\mathbb G_m\\
\Spec K, &
\end{tikzcd}
\]
whose indeterminacy is contained in the set of marked points $\{p_1,\ldots,p_r\}$. The added structure of the map to $\mathbb G_m$ enhances the tropical curve $\Gamma$ with a balanced function
\[
F\colon \Gamma\to\RR.
\]
The slope of $F$ along the ray corresponding to $p_i$ equals the order of the zero or pole of $\varphi$ at the point $p_i$ on $C$. 

The map $F$ can be constructed using the theory of harmonic functions on Berkovich spaces~\cite{BPR16}. Alternatively, we can use the semistable reduction theorem for maps to the pair $(\mathbb P^1|0+\infty)$, as worked out by J. Li~\cite{Li01}. The output is that, after possibly replacing $K$ with a field extension, there is a diagram
\[
\begin{tikzcd}
\mathcal C\arrow{dr}\arrow{rr}{\psi} & & \mathcal P\arrow{dl}\\
&\spec R',&
\end{tikzcd}
\]
completing the map $\varphi$ above, and further: (i) where $\mathcal C$ is a semistable family of genus $g$ curves with general fiber $C$, (ii) the space $\mathcal P$ is a semistable family of $2$-pointed genus $0$ curves, with special fiber a chain of $\mathbb P^1$'s, and (iii) the preimages of nodes/markings of $\mathcal P_0$ are nodes/markings of $\mathcal C_0$. 

The conditions (i)--(iii) are exactly the conditions that guarantee that $\mathcal C_0\to\mathcal P_0$ is a {\it relative stable map} in the sense of Li.\footnote{In principle, one should also impose the kissing condition that the ramification orders on the preimages of a node under normalization are equal, as in~\cite{Li01}. However, this is automatic for smoothable maps, and our maps are smoothable by construction.} The surfaces $\mathcal C$ and $\mathcal P$ are again toroidal surfaces with at worst $A_k$-singularities. The toroidal structure on $\mathcal P$ is given by the special fiber, together with two sections, corresponding to the closure of $0$ and $\infty$ in $\mathbb P^1_K$. By construction, the preimage of the toroidal boundary of $\mathcal P$ is contained in that of $\mathcal C$, so there is an induced map of cone complexes $\Sigma(\mathcal C)\to \Sigma(\mathcal P)$. However, $\Spec R$ is also a toroidal pair with boundary given by the closed point and the cone complex $\RR_{\geq 0}$. Since the arrows in the triangle above have the property that the preimage of the boundary is contained in the toroidal boundary, the maps are toroidal, and by functoriality results for tropicalizations~\cite{ACP,U15}, we obtain a triangle of cone complexes:
\[
\begin{tikzcd}
\Sigma(\mathcal C)\arrow{rr}\arrow{dr} & & \Sigma(\mathcal P)\arrow{dl}\\
&\RR_{\geq 0}.&
\end{tikzcd}
\]
Again, the fiber of $\mathcal P$ over the valuation of the uniformizer in $R'$ is a canonically polyhedral decomposition of $\RR$. The fiber produces a map
\[
F\colon \Gamma'\to \RR.
\]
We note that it is possible that, when viewed as the fiber over $\Sigma(\mathcal C)$, the graph $\Gamma'$ contains vertices that are $2$-valent and whose corresponding components have $0$ genus. However, by balancing, it is straightforward to check that $F$ descends to a piecewise linear function after coarsening $\Gamma'$ to remove these $2$-valent vertices from the graph structure, i.e., ``erasing'' the vertex. This is precisely the graph $\Gamma$ defined by the stable limit in the previous discussion.

We call the resulting balanced function 
\[
\Gamma\to \RR
\]
the {\it tropicalization} of the data $(C,p_1,\ldots,p_r,\varphi)$. 

\subsubsection{...and finally for covers} In the discussion above, we obtain from the meromorphic function on a curve $(C,\varphi)$ a map
\[
C\to \PP^1
\]
of {\it pairs} -- where $C$ is marked at the roots and poles of $\varphi$ and $\mathbb P^1$ is marked at $0$ and $\infty$. The resulting semistable reduction theorem for maps of pairs~\cite{Li01} gave rise to the tropical curve and balanced function. 

Going a step further, we can {\it in addition} mark all the branch points of $\varphi$ on $\mathbb P^1$, and then mark all the preimages of these points in $C$. We denote the marked points on $\mathbb P^1$ by $b_1,\ldots, b_m$ and the points on the domain $p_{i1},\ldots,p_{ik_i}$, where $\varphi$ sends $p_{im}$ to $b_i$ for all $m$. We now view $\varphi\colon C\to\PP^1$ as a map of pairs, with all ramification and branch points (and possibly more) marked. 

Now suppose $\varphi\colon C\to \PP^1$ is as above, but defined over a field $K$ equipped with a discrete valuation, inducing a valuation subring $R$. We may again apply semistable reduction for maps of pairs. The results of Li apply again in this setting, but in fact, once all ramification is marked, this follows from earlier work. Specifically, we may use the semistable reduction theorem for admissible Hurwitz covers of nodal curves, due to Harris--Mumford~\cite{HM82}. To summarize the output, possibly after replacing $K$ with a finite extension, we have a map
\[
\begin{tikzcd}
\mathcal C\arrow{dr}\arrow{rr}{\psi} & & \mathcal P\arrow{dl}\\
&\spec R,&
\end{tikzcd}
\]
where $\mathcal C$ is a semistable model for $C$ and $\mathcal P$ is a semistable model for $\mathbb P^1$. The special fiber of $\mathcal P$ is a tree, but not necessarily a chain, of smooth rational curves. 

Proceeding as in the previous case, we obtain a triangle of cone complexes
\[
\begin{tikzcd}
\Sigma(\mathcal C)\arrow{rr}\arrow{dr} & & \Sigma(\mathcal P)\arrow{dl}\\
&\RR_{\geq 0}.&
\end{tikzcd}
\]
If we pass to the fibers over $1$, we obtain a map
\[
F\colon \Gamma\to T
\]
where $\Gamma$ is a tropical curve and $T$ is a tropical curve whose underlying graph is a tree.

\begin{proposition}
The map $F\colon \Gamma\to T$ is finite and harmonic. 
\end{proposition}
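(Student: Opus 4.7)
The plan is to read finiteness and harmonicity directly from the structure of the admissible cover $\psi\colon\mathcal{C}\to\mathcal{P}$, using the ``nodes-to-nodes'' condition and kissing condition at nodes, and the fundamental identity for finite maps on each irreducible component.

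\textbf{Setup.} I would begin by recording the dictionary between the algebraic and tropical sides. Each vertex $v'$ of $\Gamma$ corresponds to an irreducible component $C_{v'}$ of $\mathcal{C}_0$, and similarly each vertex $v$ of $T$ to a component $P_v$ of $\mathcal{P}_0$; the restriction $\psi|_{C_{v'}}\colon C_{v'}\to P_{F(v')}$ is a finite cover of smooth rational curves of some degree $d_{v'}$. Edges and rays of $\Gamma$ correspond to nodes and marked points of $\mathcal{C}_0$, and similarly for $T$; the same dictionary applies. By the defining property of an admissible cover \cite{HM82}, the map $\psi$ sends nodes to nodes and marked points to marked points, so $F$ restricts to a map of flags sending tangent directions to tangent directions. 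In particular $F$ is surjective because $\psi$ is.

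\textbf{Finiteness.} For an edge $e$ of $\Gamma$ corresponding to a node $q\in\mathcal{C}_0$, let $e'=F(e)$, the edge of $T$ corresponding to $\psi(q)$. The kissing condition of an admissible cover guarantees that both analytic branches of $\mathcal{C}$ at $q$ map to the two branches of $\mathcal{P}$ at $\psi(q)$ with a common ramification index $k\geq 1$. In \'etale coordinates $xy=t^{\ell(e)}$ for $\mathcal{C}$ and $uv=t^{\ell(e')}$ for $\mathcal{P}$, the map takes the form $u=x^k, v=y^k$, forcing $\ell(e)=k\cdot\ell(e')$. This means the slope of $F$ along $e$ equals $k>0$; the same local argument handles rays, with slope equal to the ramification index of $\psi$ at the marked point. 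Hence $F$ has nonzero integer slope on every edge and ray, so it is finite.

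\textbf{Pure degree.} Fix a vertex $v'\in\Gamma$ with image $v=F(v')$. A tangent direction $e$ at $v$ corresponds to a node or marking $b\in P_v$. The tangent directions at $v'$ that map to $e$ under $F$ correspond to the points $q\in C_{v'}$ with $\psi(q)=b$, and by the local calculation above, the slope of $F$ along the corresponding flag at $v'$ equals the ramification index $e_q$ of $\psi|_{C_{v'}}$ at $q$. Therefore the local degree of $F$ at $v'$ in the direction of $e$ equals
\[
\sum_{q\in\psi|_{C_{v'}}^{-1}(b)} e_q = \deg\bigl(\psi|_{C_{v'}}\bigr)=d_{v'},
\]
by the fundamental identity for a finite surjective map of smooth projective curves. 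This is independent of $e$, so $F$ is locally of pure degree $d_{v'}$ at $v'$. Combined with surjectivity, this shows $F$ is harmonic.

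\textbf{Expected obstacle.} There is no substantial obstacle: once the dictionary between admissible covers and their tropicalization is set up, finiteness is the kissing condition and harmonicity is the fundamental identity for finite covers. The only care needed is to treat markings and nodes on equal footing, and to check that the tropical slope really is the algebraic ramification index under the length conventions fixed in Section~\ref{sec: tropicalizations}.
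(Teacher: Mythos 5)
Your proposal is correct and follows essentially the same route as the paper, which disposes of the statement in one line by citing the definition of an admissible cover and the explicit tropicalization computation in~\cite[Section~3]{CMR14a} — your write-up simply spells out that computation (finiteness from the kissing condition at nodes, pure degree from $\sum_q e_q = \deg \psi|_{C_{v'}}$ on each component). One small slip: with $u=x^k$, $v=y^k$ one gets $uv=(xy)^k$, so the length relation is $\ell(e')\cdot 1 = k\,\ell(e)$ rather than $\ell(e)=k\,\ell(e')$; this is exactly what makes the dilation factor of $F$ along $e$ equal to $k$, so your stated conclusion stands unchanged.
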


\begin{proof}
The proposition follows immediately from the definition of an admissible cover, and in particular, from the fact that the limiting admissible cover is a finite map -- see~\cite[Section~3]{CMR14a}.
\end{proof}

In all these constructions, we ultimately obtain a tropical curve with possibly nonzero genus decorations at the vertices. But we flag again that in our applications, which concern {\it reversing} this process, we will start with tropical curves whose vertex decorations are everywhere equal to $0$. 

\subsection{Hurwitz modifiability} In this section, we explain the main technical theorem on which our results and examples rest. We begin with the strategy. 

\subsubsection{Basic strategy} Let $F\colon \Gamma\to\RR$ be a balanced function. Our goal is to determine if it arises via the procedure described in Section~\ref{sec: trop-rsm}. In a moment, we will explain a procedure that modifies harmonic maps of graphs. 

The idea is that these modifications will make $F$ closer to being {\it finite} harmonic. If this is achieved completely, and if one can independently verify local realizability of Definition~\ref{def: locally-realizable}, then one can appeal to the deformation theory of admissible covers to establish realizability. 

Conversely, if we have realizability of $F\colon \Gamma\to\RR$, the semistable reduction theorem that establishes the properness of the space of admissible covers will guarantee that such modifications to locally realizable finite harmonic maps exist. We emphasize that the result itself is known~\cite{ABBR1,ABBR2}, see also~\cite{UZ19}; however, we believe the presentation here may be valuable. 

\subsubsection{Modifications and $H$-modifiability} We introduce the key combinatorial move in this paper. 

\begin{definition}
Let $\Gamma$ be a tropical curve, and let $p$ be a point on the underlying metric space. The {\it modification of $\Gamma$ at $p$}, denoted $\Gamma(p)$, is the graph obtained by 
\[
\Gamma(p) = \left(\Gamma\sqcup \RR_{\geq 0} \right)/\sim,
\]
where the equivalence relation identifies the origin in the ray with $p$ on the graph $\Gamma$. A {\it tropical modification $\Gamma_1$ of $\Gamma$} is the tropical curve obtained by a finite sequence of modifications at points. Note that there is an induced harmonic map
\[
\Gamma_1\to \Gamma,
\]
by contracting the new rays to their base point. 
\end{definition}

Note that modifications are local on $\Gamma$, so they apply equally well in the disconnected setting. 

\begin{remark}
If $\Gamma_1\to \Gamma$ is a modification, we can canonically view 
\[
\Gamma \subset \Gamma_1
\]
and we will frequently use this.    
\end{remark}
\begin{remark}[Geometric meaning]
Modifications have geometric meanings. A tropical curve $\Gamma$ arises from a pointed smooth curve over a discretely valued field $K$. By the semistable reduction theorem, we can find a family (after passing to a field extension) over the valuation ring $R$ with nodal special fiber, where the limits of the marked points are distinct smooth points, and the only singularities of the total space are possibly $A_n$-singularities at the nodes. The total space of this family of curves has two natural sets of divisors – those corresponding to the marked points and those corresponding to the special fiber components. The limits of the marked points are not allowed to pass through the nodes in the special fiber and must remain distinct. From this, we build $\Gamma$ as discussed above. 

The simplest type of modification is obtained by marking a new point in the general fiber, possibly after passing to a finite extension of $K$. When we take the limit of this point in the special fiber, it may (i) pass through a node or (ii) pass through another marking. However, the semistable reduction theorem tells us that we can find a new limit where these issues are resolved, and this leads to a new tropical curve $\Gamma'$. Tropically, this is precisely the procedure that produces a modification by adding a single ray. In case (i) the ray is based in the interior of an edge, and in (ii) it is based on another ray. If the new marked point limits to an unmarked point of a component, the new ray is based at a vertex. 

We can repeat this process, of course, and this leads to the general description of a modification.
\end{remark}

\begin{definition}
Let $F\colon\Gamma'\to \Gamma$ be a harmonic map of tropical curves. A {\it modification of $F$} consists of 
\begin{enumerate}[(i)]
    \item tropical modifications $\Gamma'_1$ and $\Gamma_1$ of $\Gamma'$ and $\Gamma$, respectively, and
    \item a harmonic map $\Gamma'_1\to\Gamma_1$ fitting into a commutative square
    \[
    \begin{tikzcd}
    \Gamma'_1\arrow{r}\arrow{d} & \Gamma'\arrow{d}\\
    \Gamma_1\arrow{r} & \Gamma.
    \end{tikzcd}
    \]
\end{enumerate}
We say that $F$ is {\it $H$-modifiable} if there exists a modification of $F$ that is finite and locally realizable. 
\end{definition}

Modifications are our key tool in analyzing realizability, and this relies on the following: 

\begin{theorem}
A harmonic map $F\colon\Gamma'\to \Gamma$ of tropical curves with all vertex genera equal to $0$ is realizable if and only if it is $H$-modifiable.
\end{theorem}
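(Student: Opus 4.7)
My plan is to derive both implications from the two semistable reduction theorems recalled in Section~\ref{sec: tropicalizations}: Harris--Mumford reduction for admissible covers will produce modifications out of realizations, while logarithmic smoothness of the admissible cover stack will let me smooth modifications to realizations.

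For the forward direction, assume $F$ is realized by a rational map $\varphi\colon C\dashrightarrow\Gm$ (or, already, $C\to\PP^1$) over a discretely valued field $K$. First I would enrich $\varphi$ by marking \emph{every} ramification point on $C$ together with its image on $\PP^1$, so that $\varphi$ becomes a map of pointed pairs with all branching decorated. Applying Harris--Mumford semistable reduction produces, after possibly extending $K$, a pair of semistable models $\mathcal C'\to\mathcal P'$ over $\spec R$ whose special fiber is an admissible cover. Tropicalizing yields a finite harmonic map $\hat F\colon\hat\Gamma'\to\hat\Gamma$ by the proposition above, and this map is locally realizable at every vertex $v'$: the restriction of $\psi_0$ to the corresponding component of $\mathcal C'_0$ is an honest rational map $\PP^1\to\PP^1$ with the prescribed ramification, giving a point of $\mathsf R(\hat F,v')$. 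Finally, comparing $(\mathcal C',\mathcal P')$ with the relative stable map model $(\mathcal C,\mathcal P)$ used in Section~\ref{sec: trop-rsm} via a common refinement, there are contractions $\mathcal C'\to\mathcal C$ and $\mathcal P'\to\mathcal P$ (the latter collapsing components of $\mathcal P'_0$ off the $0$--$\infty$ axis). Functoriality of tropicalization for toroidal morphisms~\cite{ACP,U15} then produces the required commutative square, exhibiting $\hat F$ as an $H$-modification of $F$.

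For the backward direction, start with a modification $\hat F\colon\hat\Gamma'\to\hat\Gamma$ of $F$ that is finite, harmonic, and locally realizable. The plan is to assemble a $\CC$-point of the admissible cover stack $\Hbar$ and then smooth it. At each vertex $v'$ of $\hat\Gamma'$ pick a cover $\xi_{v'}\in\mathsf R(\hat F,v')$; harmonicity and finiteness of $\hat F$ force the ramification orders at the two ends of each edge to agree, so the $\xi_{v'}$ glue along the edges of $\hat\Gamma'$ into an admissible cover $\psi_0\colon C_0\to T_0$ over a nodal tree. This defines a $\CC$-point of $\Hbar$ lying in the boundary stratum whose combinatorial type is $\hat F$ and whose logarithmic structure records the edge lengths. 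Because $\Hbar$ is logarithmically smooth, $\psi_0$ smooths along this stratum to a family over $\spec R$ whose generic fiber is a Hurwitz cover $\varphi\colon C\to\PP^1$. Tropicalizing recovers $\hat F$; pushing $\varphi$ through the contraction $\mathcal P'\to\mathcal P$ and restricting away from $\{0,\infty\}\subset\PP^1$ gives a rational map $C\dashrightarrow\Gm$ tropicalizing to $F$, so $F$ is realized.

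The main obstacle I anticipate is the smoothing step in the backward direction. Log smoothness of $\Hbar$ gives plenty of deformations, but recovering the \emph{precise} modification $\hat F$ on tropicalization requires deforming along the specific cone of the boundary stratum whose coordinates are the prescribed edge lengths of $\hat\Gamma'$; this reduces to the duality between smoothing parameters and edge lengths, so any positive rational point of the cone is realized by some one-parameter smoothing after a suitable base change. A smaller obstacle in the forward direction is the bookkeeping of the common refinement of the two semistable models, which is routine once one invokes functoriality of tropicalization for toroidal morphisms. The repeated implicit base changes are harmless throughout, since both realizability and $H$-modifiability are insensitive to finite extensions of $K$.
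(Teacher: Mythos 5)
Your proposal is correct and follows essentially the same route as the paper: one direction uses properness/semistable reduction for admissible covers (Harris--Mumford) to extract the finite locally realizable modification, and the other glues the local covers from $\mathsf R(\hat F,v')$ into a boundary point of the admissible cover space, uses its (log) smoothness to smooth with edge lengths matching the valuations of the node-smoothing parameters, and then contracts back to recover the original map. The only differences are cosmetic (e.g.\ phrasing the smoothing via log smoothness rather than local coordinates $f_q$ on an \'etale chart), so there is nothing substantive to add.
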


\begin{proof} We divide the proof into two parts: going from the $H$-tropical modification to a realization, and going from a realization to an $H$-tropical modification. 

\noindent
{\sc I. Modification to realization.} Fix a harmonic map as stated in the theorem. Let 
\[
\begin{tikzcd}
\Gamma'_1 \arrow{r} \arrow{d} & \Gamma' \arrow{d}\\
\Gamma_1 \arrow{r} & \Gamma
\end{tikzcd}
\]
be an $H$-modification. By the local realizability aspect of the definition of an $H$-modification find marked rational curves $C_v$ and $C_u$, for each vertex $v$ of $\Gamma'_1$ with image $u$ under $F$, whose markings are respectively induced by the edges incident to $v$ and $u$, as well as a map
\[
C_v \to C_u
\]
determined by $\Gamma'_1 \to \Gamma_1$. Specifically, a marking of $C_v$ maps to a marking of $C_u$ precisely when the corresponding edge of $\Gamma'_1$ maps to the appropriate edge in $\Gamma_1$. Moreover, the ramification will be equal to the slope at the corresponding edge. 

By doing this for all $v$ and gluing the curves as dictated by the graphs, we obtain an admissible cover
\[
\overline C'_1 \to \overline C_1
\]
of nodal curves. We claim the curves can be simultaneously smoothed along with the map to obtain $\mathcal C'_1 \to \mathcal C_1$ over a discrete valuation ring $R$ that is ``compatible with the lengths'' in the following sense: for each edge $e$ in $\Gamma'_1$ resp. $\Gamma_1$, the length of that edge is equal to the valuation of the deformation parameter of the node of the curve $\mathcal C'_1$ resp. $\mathcal C_1$. 

The claim follows from the smoothness of the moduli space of (orbifold) admissible covers~\cite{ACV,HM82}. To see this, note that $[\overline C'_1 \to \overline C_1]$ defines a moduli point in the space of admissible covers. The moduli space is well-known to be smooth. In an \'etale neighborhood $U$ of this point, to each node $q$ of the base curve $\overline C_1$ corresponds a hypersurface $V_q$ parameterizing deformations of this cover where the node persists. If we choose an equation $f_q$ for each $V_q$, the parameters $f_q$ together form a subset of a local system of coordinates for $U$. 

Now, given a map $\spec R \to U$, the tropicalization of the corresponding family of covers can be identified combinatorially, that is, ignoring the lengths of the edges, with $\Gamma'_1 \to \Gamma_1$. The edge lengths are equal to the valuations of the parameters $f_q$ – these give the lengths on edges of the base graph, and the lengths on the source are determined since the slopes are known. In particular, by smoothness, we can find a map $\spec R \to U$ whose corresponding family of covers realizes the given $\Gamma'_1 \to \Gamma_1$. 

It remains to produce the realization as the original $\Gamma' \to \Gamma$. The maps $\Gamma'_1 \to \Gamma'$ resp. $\Gamma_1 \to \Gamma$ determine forests of rational curves on $\overline C'_1$ resp. $\overline C_1$. We can contract the corresponding surfaces to $\mathcal C'$ and $\mathcal C$. Since there is a map $\Gamma' \to \Gamma$, there is a map of surfaces
\[
\mathcal C' \to \mathcal C.
\]
Tropicalizing these families, following Section~\ref{sec: trop-rsm}, we obtain a realization of $\Gamma' \to \Gamma$, as promised. This completes the first part of the proof. 

\noindent
{\sc II. Realization to modification.} Suppose we have a realization. Precisely, suppose we have a family $\mathcal C' \to \mathcal C$ over a valuation ring $R$. Assume the generic fiber of this map is finite, that is, a Hurwitz cover, with tropicalization $\Gamma' \to \Gamma$. 

Let $C' \to C$ be the general fiber, defined over the valued field $\spec K$. By properness of the moduli space of admissible covers, after possibly passing to a finite extension (which we suppress from the notation), we can find a new limit – the limit as an admissible cover. Denote these families and the map by:
\[
\mathcal C'_1 \to \mathcal C_1.
\]
Note that this map is finite. It can be viewed canonically as an \'etale map of orbifolds using the orbifold structure of~\cite{ACV}, or as a ramified map of schemes. We can also assume that both the source and target have maps to the source and target of $\mathcal C'$ and $\mathcal C$ that extend the map identifying the general fibers. To conclude, we tropicalize this family of covers and obtain the modification
\[
\Gamma'_1 \to \Gamma_1
\]
as claimed. 
\end{proof}

\section{Non-superabundant functions}\label{sec: genus 0}

We prove that every balanced function $\Gamma \to \RR$ is H-modifiable, provided it contracts no cycles. In particular, this implies realizability in genus $0$. The section includes technical constructions that are needed in later sections. 

\subsection{Basic structures} We begin with preparatory results about realizable or $H$-modifiable balanced functions.

\begin{theorem}[Closedness of realizability]\label{lemma:limit}
Let $(\Gamma_t, F_t)$ be a continuously varying family of tropical curves equipped with balanced functions, for $t \in [0,1)$. If $(\Gamma_t, F_t)$ is realizable for all $t$, then the limit $(\Gamma_1, F_1)$ is also realizable.
\end{theorem}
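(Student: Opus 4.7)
The plan is to invoke the equivalence between realizability and $H$-modifiability established above, and to prove that $H$-modifiability is preserved under limits of continuously varying families. For each $t\in[0,1)$, use the realizability of $(\Gamma_t,F_t)$ to choose an $H$-modification $\hat F_t\colon\hat\Gamma_t\to\hat T_t$ which is finite and locally realizable. The combinatorial data of such a modification -- the graph map, slopes on edges and rays, and degree -- take values in a locally constant and, once the genus and degree are fixed, finite set of types. The degree in particular is bounded by the sum of the outgoing slopes of $F_t$ on the rays of $\Gamma_t$, which is fixed along the family. By pigeonhole, pass to a sequence $t_n\to 1$ along which the combinatorial type of $\hat F_{t_n}$ is constant; call it $\tau$.

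Along this subsequence, the continuous data -- edge lengths on $\Gamma_{t_n}\subset\hat\Gamma_{t_n}$ and positions of attachment vertices on $\RR\subset\hat T_{t_n}$ -- vary continuously: the former by hypothesis, and the latter because, with $\tau$ fixed, the attachment positions on $\RR$ are determined by $F_{t_n}$ and the integer slopes of $\tau$. After a further extraction, assume these parameters converge, with some bounded edges possibly shrinking to length zero in the limit. Contract such zero-length edges in source and target, obtaining a harmonic map $\hat F_1\colon\hat\Gamma_1\to\hat T_1$. By construction this fits into the commutative square required of a modification of $(\Gamma_1,F_1)$, and it is finite because all integer slopes were nonzero throughout and remain nonzero in the limit.

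The main obstacle is verifying local realizability of $\hat F_1$. At a vertex $v'$ of $\hat\Gamma_1$ where several pre-limit vertices of $\hat\Gamma_{t_n}$ have collapsed, the local ramification data is the concatenation of the data at those vertices, and one must check $\mathsf R(\hat F_1,v')\neq\emptyset$. This is a limiting statement for admissible covers: the local realizations at the pre-limit vertices provide a sequence of admissible covers of fixed combinatorial type whose nodal degenerations correspond to the collapsed edges; by properness of the moduli stack of admissible covers on a fixed numerical stratum (following Harris--Mumford and its orbifold refinement of Abramovich--Corti--Vistoli), a subsequential limit exists and is precisely an admissible cover realizing the concatenated data at $v'$. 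Feeding the resulting locally realizable, finite modification $\hat F_1$ back into the $H$-modifiability theorem yields the realizability of $(\Gamma_1,F_1)$, completing the argument.
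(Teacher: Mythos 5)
Your route is genuinely different from the paper's: the paper disposes of this statement in one line by citing the polyhedrality/closedness of the realizable locus ([R16, Theorem~A]), whereas you attempt a self-contained compactness argument for $H$-modifiability. That would be worthwhile if it closed, but as written each of the three load-bearing steps has a gap. First, the pigeonhole step: the set of combinatorial types of $H$-modifications with fixed genus and degree is \emph{not} finite -- one can always enlarge a modification by marking a further point of the target and all of its preimages -- so finiteness of types only holds after proving that the $\hat F_t$ can be \emph{chosen} with complexity (number of added rays, number of vertices of $\hat T_t$) bounded uniformly in $t$, e.g.\ by a Riemann--Hurwitz bound on branch vertices plus a pruning argument. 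No such bound is given, and without it a constant-type subsequence need not exist. Second, the extraction of convergent continuous parameters: the edge lengths of $\Gamma_t$ converge by hypothesis, and images of points of $\Gamma_t$ in $\RR$ are controlled by $F_t$, but the lengths of new bounded edges inside the attached trees of $\hat T_t$ and the attachment points along newly added rays of $\hat\Gamma_t$ are free choices not controlled by $(\Gamma_t,F_t)$ and may diverge as $t_n\to 1$; ``assume these parameters converge'' presupposes a boundedness that must be arranged or proved.

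Third, and most substantively, the treatment of local realizability at a merged vertex uses the wrong tool. When edges of $\Gamma_{t_n}$ collapse, the chosen local covers at the vertices of the collapsing cluster glue to a \emph{nodal} admissible cover with the correct outer ramification data; to conclude $\mathsf R(\hat F_1,v')\neq\emptyset$ you must \emph{smooth} this nodal cover to a cover of a smooth $\PP^1$, and that is a deformation statement resting on the smoothness of the admissible-covers moduli space -- exactly the mechanism used in Part~I of the paper's proof of the modifiability theorem. Properness runs in the opposite direction: it produces nodal limits from degenerating families of smooth covers, and here there is no such family, only a single glued nodal object, so invoking properness does not yield the required element of $\mathsf R(\hat F_1,v')$. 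Relatedly, if the collapsing cluster contains a cycle of $\Gamma_t$, the merged vertex carries positive genus and the local Hurwitz problem is no longer one between genus-zero curves, so the paper's notion of local realizability, as you use it, does not directly apply. These points are plausibly repairable, but as it stands the argument does not go through, while the paper's citation of the closedness of the tropicalized moduli space bypasses all of them.
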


\begin{proof}
    Follows immediately from~\cite[Theorem A]{R16}.
\end{proof}

The theorem above will be used to reduce realizability statements to nearly trivalent cases. 

\begin{definition}
Let $F\colon \Gamma \to \Delta$ be a harmonic map of tropical curves. An edge or ray $e$ of $\Gamma$ is \textit{contracted} if $F$ is constant on $e$. A vertex in $\Gamma$ is a \textit{critical point} if it is simultaneously incident to contracted and non-contracted edges.
\end{definition}

\begin{lemma}\label{lemma:non-contracted_and_balanced_implies_realizable}
Let $F: \Gamma \to \RR$ be a balanced function on a possibly disconnected tropical curve such that no edge or ray of $\Gamma$ is contracted.
Then $F$ admits an H-modification $\hat{F}: \hat{\Gamma} \to \hat{\Delta}$.
\end{lemma}

\begin{proof}
Since no edge is contracted, the lemma reduces to checking local realizability at each vertex. Fix a vertex $v$ of the domain $\Gamma$. To $v$, we associate $\mathbb{P}^1$ marked at a finite collection of distinct points $S \subset \mathbb{P}^1$ in bijection with the tangent directions (flags) of edges based at $v$. At each point $p$ we have a slope $a_p$ coming from $F$. The divisor $\sum_{p \in S} a_p [p]$ has degree $0$ and is therefore principal. Any trivializing section of this line bundle gives a local realization of $F$ at $v$.



\end{proof}

%

The next result shows that modifications cannot make things ``worse'' for H-modifiability. Consequently, in many cases, we can assume that the harmonic maps we are interested in do not contract any rays.

\begin{lemma}\label{lemma:modification_of_realizable_is_realizable}
Let $F'\colon \Gamma' \to \mathbb{R}$ be a modification of a balanced function $F\colon \Gamma \to \mathbb{R}$. If $F$ is $H$-modifiable, then $F'$ is also $H$-modifiable.
\end{lemma}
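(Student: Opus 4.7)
The plan is to extend the given $H$-modification of $F$ to one of $F'$ by lifting the new rays coming from $\Gamma'$. Since $\Gamma' \to \Gamma$ is obtained from a finite sequence of point modifications, by induction it suffices to treat the case in which $\Gamma'$ is obtained from $\Gamma$ by attaching a single new ray $r$ at a point $p \in \Gamma$; commutativity of the modification square forces $F'|_{r} \equiv F(p)$.

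Let $\hat F \colon \hat\Gamma \to \hat T$ be the given finite, locally realizable $H$-modification of $F$, and subdivide if needed so that $p \in \Gamma \subset \hat\Gamma$ and its image $\hat q := \hat F(p) \in \hat T$ are both vertices. Let $d$ denote the pure local degree of $\hat F$ at $p$. I construct $\hat T''$ by attaching a single ray $s$ to $\hat T$ at $\hat q$, and extend $\hat T \to \mathbb{R}$ by contracting $s$ to $F(p)$. I construct $\hat\Gamma''$ by attaching at $p$ the ray $r$ together with $d-1$ further auxiliary rays. The extended map $\hat F''$ agrees with $\hat F$ away from $p$ and sends each of the $d$ new rays at $p$ to $s$ with slope $1$.

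What remains is a routine verification. Commutativity of the new modification square is direct: the $d-1$ auxiliary rays, together with the old new rays of $\hat\Gamma \to \Gamma$, all contract under $\hat\Gamma'' \to \Gamma'$, while $r \subset \hat\Gamma''$ maps identically to $r \subset \Gamma'$, and $s$ contracts under $\hat T'' \to \mathbb{R}$ to $F(p)$. Harmonicity and purity of $\hat F''$ at $p$ hold since the $d$ new unit slopes along the tangent direction $s$ sum to the pure degree $d$, and nothing else at $p$ has been altered. Finiteness is immediate. For local realizability at $p$, take the original cover $\phi \colon C_p \to C_{\hat q}$ of marked $\mathbb{P}^1$'s realizing the Hurwitz data at $p$, and mark any non-branch point of $\phi$ on $C_{\hat q}$ as $s$; the $d$ unramified preimages of this point on $C_p$ realize the $d$ new source markings.

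The only mild obstacle is the bookkeeping needed to verify that $\hat\Gamma''$, carrying $d$ new rays at $p$, is simultaneously a tropical modification of both $\hat\Gamma$ (where $r$ and the $d-1$ auxiliaries all contract to $p$) and of $\Gamma'$ (where the $d-1$ auxiliaries and the rays of $\hat\Gamma \smallsetminus \Gamma$ all contract, while $r$ persists). Essentially, $\hat\Gamma''$ is the pushout $\hat\Gamma \cup_\Gamma \Gamma'$ together with $d-1$ further rays at $p$. Beyond this, the argument uses only the freedom to choose a non-branch point on $\mathbb{P}^1$, which is immediate.
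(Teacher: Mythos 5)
Your construction has a genuine gap at the harmonicity step. You attach the new rays only at the point $p$ itself, but once the target $\hat T$ has been modified by the ray $s$ at $\hat q = \hat F(p)$, \emph{every} point of the fibre $\hat F^{-1}(\hat q)$ must acquire edges mapping onto $s$, not just $p$. At any other preimage $P \in \hat F^{-1}(\hat q)$, $P \neq p$ (and such points exist in general, since the $H$-modification $\hat F$ is a finite harmonic map typically of degree larger than one), your extended map has local degree $0$ in the tangent direction of $s$ but local degree $d_P \geq 1$ in the old directions at $\hat q$, so the pure-degree condition fails and $\hat F''$ is not harmonic. Your verification only checks purity at $p$ ("nothing else at $p$ has been altered"), which is exactly where the problem is not; it is the untouched points over $\hat q$ that break.

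The repair is what the paper's proof does: add slope-$1$ rays mapping to the new target ray at \emph{each} point $P$ of $\hat F^{-1}(\hat F(p))$, in number equal to the local degree $d_P$ there (with $d_P - 1$ auxiliary rays at $p$ itself, since the lifted ray $r$ accounts for one). Local realizability at those additional points is preserved by the same device you use at $p$: in the local model $(\PP^1 \to \PP^1)$ at $P$, mark a general (unbranched, unmarked) point of the target and its $d_P$ unramified preimages. With that amendment your argument — the reduction to a single attached ray, the commutativity bookkeeping via the pushout $\hat\Gamma \cup_\Gamma \Gamma'$, and the generic-point trick for the Hurwitz data — coincides with the paper's proof.
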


\begin{proof}
It suffices to show that $F'$ is H-modifiable when a single additional ray is added to $\Gamma$. Denote this ray by $e$; its image under the collapsing map is a point $v$ in $\Gamma$. Hence, $F'$ maps $e$ to $F(v)$. 

By hypothesis, there exists an H-modification $\hat{F}\colon \hat{\Gamma} \to \hat{\Delta}$ of the original map $F\colon \Gamma \to \mathbb{R}$. We first glue the ray $e$ to $v \in \hat{\Gamma}$ in the canonical way; it has no target in the modification yet. Hence, we add a ray $r$ to $F(v) \in \hat{\Delta}$; this fixes the target of the putative H-modification of $F'$, while we only modify the domain later on.

In order to resolve local realizability at $v$, we further add $d_v(\hat{F})-1$ new rays to $v$; along with $e$, each is mapped to $r$ with slope $1$. Now, the only obstructions to local realizability are the vertices $u \in \hat{\Gamma}$ different from $v$ such that $\hat{F}(u)$ equals $F(v)$. Resolving a case like this is straightforward: we add $d_u(\hat{F})$ new rays to $u$ and map each to $r$ with slope $1$. We are done.
\end{proof}

\medskip

We recall that the \emph{star} of a vertex $v$ in a metric graph $\Gamma$ consists of $v$, all rays adjacent to $v$, and the interiors of all edges whose closures contain $v$. These stars may contain non-compact edges; the notions of harmonicity and local realizability extend verbatim to this setting.

\medskip

\begin{lemma}[Gluing lemma]\label{lemma:glueing}
Let $F\colon \Gamma \to \Delta$ be a harmonic map of (possibly disconnected) tropical curves. Suppose
\[
\Gamma=\Sigma\cup\bigcup_{i=1}^n C_i,
\]
where:
\begin{enumerate}[(i)]
        \item no edge or ray in $\Sigma$ is $F$-contracted,
    \item every edge and ray in each $C_i$ is $F$-contracted, and
    \item $C_i\cap C_j\subset\Sigma$ for $i\neq j$.
\end{enumerate}

Then $F|_{\Sigma}\colon\Sigma\to\Delta$ is harmonic. Assume it admits an $H$-modification.

For each $i$, define
\[
C_i'=\overline{\Bigl(C_i\cup\!\!\bigcup_{W\in\Sigma\cap C_i}\!\Star_{\Gamma}(W)\Bigr)\setminus\!\!\bigcup_{j\neq i}C_j}.
\]
The restriction $h_i \coloneqq F|_{C_i'}\colon C_i'\to F(C_i')$ is harmonic. Assume each $h_i$ admits an $H$-modification such that, in the modified domain, every edge or ray attached to $\Sigma\cap C_i$ has slope $1$.

Then $F$ admits an $H$-modification. On each $C_i$, the slopes on edges and rays agree with those prescribed by $h_i$.
\end{lemma}

\begin{proof}
We argue by induction on $n$; the case $n=0$ is trivial. Assume the statement holds for $n-1$ and set
\[
\Sigma_{n-1}=\Sigma\cup\bigcup_{i=1}^{n-1}C_i,
\qquad
F_{n-1}\coloneqq F|_{\Sigma_{n-1}}.
\]
Since every edge and ray in $C_n$ is contracted by $F$, the subgraph $C_n$ is mapped to a single vertex $V_n\in\Delta$. By the inductive hypothesis, $F_{n-1}$ admits an $H$-modification
\[
\widehat F_{n-1}\colon \widehat\Sigma_{n-1}\to\widehat\Delta_{n-1}
\]
with the required properties.

Define
\[
\Gamma_n=\widehat\Sigma_{n-1}\cup C_n
\]
and a map $G_n\colon\Gamma_n\to\widehat\Delta_{n-1}$ by
\[
G_n(x)=
\begin{cases}
\widehat F_{n-1}(x), & x\in\widehat\Sigma_{n-1},\\
V_n, & x\in C_n.
\end{cases}
\]
This is well-defined: for $i<n$ we have $C_i\cap C_n\subset\Sigma$, hence
\(
\widehat\Sigma_{n-1}\cap C_n=\Sigma\cap C_n,
\)
and on this intersection both definitions agree with $F$. The map $G_n$ is clearly harmonic.

By assumption, $h_n$ admits an $H$-modification $\widehat h_n$. Let $T_n=\operatorname{Im}(\widehat h_n)$, a tree rooted at $V_n$ arising from the modification, and let $\widehat C_n$ denote $C_n$ with all attached modifications. We now construct the desired modification.

\begin{itemize}
\item \textbf{Target.}
Obtain $\widehat\Delta$ from $\widehat\Delta_{n-1}$ by attaching a copy of $T_n$, identifying its root with $V_n$.

\item \textbf{Domain.}
Start with $\widehat\Sigma_{n-1}\cup\widehat C_n$, glued along $\Sigma\cap C_n$.  
For each vertex $W\in\widehat\Sigma_{n-1}\setminus(\Sigma\cap C_n)$ with $G_n(W)=V_n$, attach $d_W(\widehat F_{n-1})$ copies of $T_n$ at $W$, identifying their roots with $W$.  
Let $\widehat\Sigma$ be the resulting graph.

\item \textbf{Map.}
Define
\[
\widehat F\colon\widehat\Sigma\to\widehat\Delta
\]
to equal $\widehat F_{n-1}$ on $\widehat\Sigma_{n-1}$, to equal $\widehat h_n$ on $\widehat C_n$, and to map each newly attached copy of $T_n$ identically onto the target tree with slope $1$. 
\end{itemize}

The map $\widehat F$ is well-defined. For every vertex $W$ with $G_n(W)=V_n$, each edge or ray attached to $W$ outside $\widehat\Sigma_{n-1}$ has slope $1$; hence harmonicity and local realizability hold at every vertex. By construction, the natural diagram relating $F$ and $\widehat F$ commutes: newly added rays in the domain map to the corresponding components in the target.

Thus $\widehat F$ is an $H$-modification of $F$ with the desired properties.
\end{proof}
\color{black}

\subsection{Realizability without contracted cycles} 
We now work towards realizability for balanced functions with tree domains, and more generally, balanced functions that do not contract any cycles.

The following proposition serves as technical input to glue $H$-modifications of graphs along contracted trees. 

\begin{proposition}\label{theorem:trees}
Let $T$ be a genus $0$ tropical curve, and let
\[
F \colon T \to \mathbb{R}
\]
be a balanced function. Suppose there exists a compact subtree $T_1 \subset T$ such that $F$ contracts $T_1$ to $0 \in \mathbb{R}$. Assume furthermore that every ray of $T$ not contained in $T_1$ is non-contracted by $F$.

Let $A_1, \ldots, A_n$ ($n \ge 1$) be the points of $T_1$ that are incident to non-contracted rays of $T$. Assume that $T_1$, together with the stars of the points $A_i$, covers $T$, and that, aside from the edges belonging to $T_1$, only rays are attached at each $A_i$.

Then $F$ admits an $H$-modification
\[
\widehat{F} \colon \widehat{T} \to \widehat{\Delta}.
\]
Moreover, the modification may be chosen so that the slope of $\widehat{F}$ is equal to $1$ on every edge or ray contained in
\[
T_1 \,\cup\, \overline{\widehat{T} \setminus T}.
\]
\end{proposition}

\begin{proof}
We induct on the number of edges in the contracted subtree $T_1$. The base case is when $T_1$ consists of a single vertex, possibly with finitely many rays attached; the claim holds clearly. For the inductive step, we consider several different cases. In the course of the proof, we will call the vertices $A_1,\ldots,A_n$ the {\it critical vertices}. 

\begin{enumerate}[(i)]
\item Suppose there exists a critical vertex of valency at least $2$ in the tree $T_1$, and relabel so that this vertex is $A_1$. Since $T_1$ is a tree, we can disconnect $T_1$ at $A_1$, resulting in two strictly smaller subtrees, which we will call $T_1'$ and $T_1''$, such that $T_1' \cap T_1'' = \{A_1\}$.  
Let $S'$ be the union of $T_1'$ with all the stars in $T$ of the critical points that lie in $T_1'$, excluding the interior of $T_1''$.  
Let $S''$ be obtained analogously from $T_1''$.  
Since $F$ contracts any edges in $T_1'$ and $T_1''$, $F$ restricts to harmonic maps on $S'$ and $S''$, with targets being the respective images.  
Furthermore, these restrictions both admit $H$-modifications with the required properties, by the inductive hypothesis.  
Hence, the hypotheses of Lemma~\ref{lemma:glueing} is satisfied, so we get the required $H$-modification.

\item We now consider the case where every critical vertex $A_i$ is a leaf of $T_1$, and let $e_i$ denote the corresponding leaf edges. If any $e_i$ is incident to two critical vertices, then by connectivity of $T_1$, there must only be two critical vertices $A_1$ and $A_2$, and the edge $e_1$ between them is all of $T_1$.  
In this case, we modify as follows. Let $M$ be the midpoint of $e_1$ and let $a = |A_1 M|$, the length of $A_1 M$.  
For the domain modification, add two rays $\ell_1, \ell_2$ to the domain graph based at $M$.  
For the target, add a new ray $\ell$ to the target graph at the image of $M$, i.e., $0 \in \mathbb{R}$. Place a vertex $N$ on this new ray $\ell$, such that its distance to $0$ is $a$.  
Map the segments from $A_i$ to $M$, for both $i$, to the segment from $0$ to $N$. Map $\ell_1, \ell_2$ to the ray emanating from $N$ along $\ell$ with slope $1$.  
Finally, at each critical vertex $A_i$, add $d_{A_i} - 1$ rays to $A_i$, and map each to $\ell$ with slope $1$.

\item Assume that we are in neither of the previous situations, and that the critical vertices $A_i$ are all leaves with associated leaf edges $e_i$. In particular, none of the $e_i$ connect distinct $A_j$’s. Without loss of generality, let $e_1$ have minimal length among these leaf edges, and denote this length by $a$.

\textbf{Construction of the $H$-modification:}  
For each $i$, let $B_i$ be the point on $e_i$ at distance $a$ from $A_i$, and let $\mathcal{B}$ be the set of these points. For each distinct $P \in \mathcal{B}$, attach $d_P$ rays $\ell_{P,1},\dots,\ell_{P,d_P}$ to the domain at $P$. This produces a modification $T'$ of the domain $T$.  

Add a ray $\ell$ to the target graph at $0 \in \mathbb{R}$, and place a vertex $N$ on $\ell$ at distance $a$ from $0$. This yields a modification $\Sigma'$ of the target. Map each newly added ray in the domain to $\ell$ with slope $1$, starting at $N$.

\textbf{Applying the inductive hypothesis:}  
Remove the half-open segments $[A_i,B_i)$ from $T$, giving a disjoint union of trees $S_1,\dots,S_m$, each containing strictly fewer edges than $T_1$. For each $j$, form $S'_j$ by attaching the stars of all vertices in $\mathcal{B} \cap S_j$ to $S_j$.  

Viewing the interior of $\ell$ as a copy of $\mathbb{R}$, each $S'_j$ induces a map $h_j \colon S_j \to \mathbb{R}$ that contracts $S_j$ to $N$. By the inductive hypothesis, each $h_j$ admits an $H$-modification satisfying the required properties.  
Applying the gluing construction of Lemma~\ref{lemma:glueing}, we obtain an $H$-modification of the original map $T \to \mathbb{R}$ with the desired properties.
\end{enumerate}

The three situations exhaust the inductive step, so the result follows by induction on the number of vertices of the contracted subtree. 
\end{proof}

We now have the ingredients to show that maps without contracted cycles are $H$-modifiable. 

\begin{theorem}\label{theorem:no contracted cycles}
Suppose $F\colon \Gamma \to \mathbb{R}$ is a balanced function on a tropical curve that does not contract any cycles. Then $F$ is realizable.
\end{theorem}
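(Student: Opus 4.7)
The plan is to construct an $H$-modification of $F$ directly by decomposing $\Gamma$ into a controlled neighborhood of its contracted locus and a purely non-contracted remainder, treating each piece with the tools already developed, and assembling them via Lemma~\ref{lemma:glueing}. Realizability will then follow from the equivalence of realizability with $H$-modifiability established earlier.

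Concretely, I would begin by letting $\Gamma_c \subseteq \Gamma$ be the subgraph of contracted bounded edges; since no cycle of $\Gamma$ is contracted, $\Gamma_c$ is a forest, and I write its connected components as compact subtrees $T_1,\ldots,T_k$. For each $T_j$, let $A_{j,1},\ldots,A_{j,n_j}$ be its critical vertices and set
\[
S_j \;=\; T_j \,\cup\, \bigcup_{i=1}^{n_j} \Star(A_{j,i}),
\]
regarded as a tropical curve of genus $0$ by extending each non-contracted half-edge emanating from an $A_{j,i}$ to a ray. Let $\Gamma_{nc}$ denote the closure of the union of non-contracted edges and rays of $\Gamma$. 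Then $\Gamma = \Gamma_{nc} \cup \bigcup_j S_j$, and each intersection $S_j \cap \Gamma_{nc}$ is an $F$-allowable locally closed subgraph consisting of the critical vertices $A_{j,i}$ together with the interiors of the non-contracted edges incident to them; in particular it contains no $F$-contracted edges. Any contracted rays of $\Gamma$ correspond to marked points at which the target rational function is invertible, and may be safely stripped off here and reattached to the final realization.

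Next, I would apply Proposition~\ref{theorem:trees} to each pair $(S_j, T_j)$: its hypotheses are built into the definition of $S_j$, and it produces an $H$-modification $\hat h_j \colon \hat S_j \to \hat \Delta_j$ of $F|_{S_j}$ in which every edge of $T_j$ and every new ray of $\hat S_j \setminus S_j$ has slope $1$. Lemma~\ref{lemma:non-contracted_and_balanced_implies_realizable} applied to $\Gamma_{nc}$ provides the identity modification as an $H$-modification of $F|_{\Gamma_{nc}}$. I would then assemble via iterated application of Lemma~\ref{lemma:glueing}. At the first step, set $\Sigma_1 := S_1$ and $\Sigma_2 := \Gamma_{nc}$: the intersection contains no contracted edges, and since $\hat \Sigma_2 = \Sigma_2$, the slope-$1$ hypothesis on edges of $\hat \Sigma_2 \setminus \Sigma_2$ is vacuous. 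The proof of the gluing lemma mirrors the new rays of $\hat h_1$ over critical vertices of $T_1$ onto the $\Gamma_{nc}$ side, producing an $H$-modification of $F|_{\Gamma_{nc} \cup S_1}$. Because the $T_j$ are pairwise disjoint, the critical vertices of $T_2$ lie away from the locus modified at the first step, so the same argument glues $S_2$, and the same reasoning inductively handles $S_3,\ldots,S_k$.

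The main obstacle I anticipate is combinatorial bookkeeping rather than anything conceptual: verifying at each stage that the rays produced by Proposition~\ref{theorem:trees} together with the mirrored rays added by the previous gluing step do not disturb the hypotheses for the next gluing. This is controlled by two observations: the slope-$1$ conclusion of Proposition~\ref{theorem:trees} makes the compatibility hypothesis of Lemma~\ref{lemma:glueing} automatic, and the pairwise disjointness of the subtrees $T_j$ keeps successive modifications supported in disjoint regions of both domain and target. With these in hand the induction closes, the final $H$-modification is locally realizable at every vertex by the two input lemmas, and realizability of $F$ follows.
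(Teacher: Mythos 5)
Your argument is correct and is essentially the paper's own proof: remove the contracted rays, decompose $\Gamma$ into the maximal non-contracted subgraph and the contracted trees together with the stars of their critical points, apply Lemma~\ref{lemma:non-contracted_and_balanced_implies_realizable} and Proposition~\ref{theorem:trees} to the pieces, and glue inductively via Lemma~\ref{lemma:glueing}. The only cosmetic difference is that the paper disposes of the contracted rays by citing Lemma~\ref{lemma:modification_of_realizable_is_realizable}, which gives a cleaner justification than your informal ``strip and reattach'' remark.
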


\begin{proof}
Any $F$-contracted rays in $\Gamma$ can be regarded as arising from a modification, so by Lemma~\ref{lemma:modification_of_realizable_is_realizable} we may assume that $\Gamma$ has no such rays. Let $\Sigma$ be the largest metric subgraph of $\Gamma$ without contracted edges, and let $C$ be the closure of its complement. By hypothesis, $C$ is a union of disjoint compact metric trees $T_1, \dots, T_n$, and if $C$ is nonempty, then $\Sigma \cap C$ is finite. By Lemma~\ref{lemma:non-contracted_and_balanced_implies_realizable}, the restriction $F|_{\Sigma}$ admits an $H$-harmonic modification $F|_{\Sigma}'$ that agrees with $F$ on $\Sigma$.

For each $T_i$, let $T_i'$ be the union of $T_i$ with the stars of those points in $T_i$ that meet $\Sigma$. By Proposition~\ref{theorem:trees}, the restriction of $F$ to $T_i'$ admits an $H$-modification with slope $1$ on all edges or rays that were contracted by $F$ or newly added. Applying Lemma~\ref{lemma:glueing} to glue these modifications to $F|_{\Sigma}'$ produces an $H$-modification of $F$, as required.
\end{proof}

In particular, this implies the following:

\begin{corollary}\label{lemma:genus_0_implies_realizable}
Let $\Gamma$ be a tropical curve of genus $0$ and let $F\colon\Gamma\to\RR$ be a balanced function. Then $F$ is $H$-modifiable, and therefore realizable.
\end{corollary}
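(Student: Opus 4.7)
The plan is to observe that this corollary is essentially immediate from Theorem~\ref{theorem:no contracted cycles}, the statement that any balanced $F\colon \Gamma\to\RR$ on a tropical curve without contracted cycles is $H$-modifiable and hence realizable. All that needs to be said is why the genus $0$ hypothesis places us in the hypothesis of that theorem.

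Concretely, I would argue as follows. A tropical curve $\Gamma$ of genus $0$ has, by definition, first Betti number zero, so the underlying metric graph is a tree (possibly with some unbounded rays attached). In particular $\Gamma$ contains no cycles at all, so the subcondition ``$F$ does not contract a cycle'' is vacuously satisfied for every piecewise linear $F$. Invoking Theorem~\ref{theorem:no contracted cycles} then gives an $H$-modification of $F$, and the equivalence between $H$-modifiability and realizability proved earlier in the paper produces the realization.

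The only potential subtlety, and thus the place one should be careful, is to make sure that the notion of ``genus'' used in the corollary agrees with the one used in Theorem~\ref{theorem:no contracted cycles} -- namely, the first Betti number of the metric realization of $\Gamma$, so that genus zero really does force the graph to be a tree. Once this is noted, there is no further work to do, and the corollary follows with a one-line argument.
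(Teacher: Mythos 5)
Your proposal is correct and matches the paper exactly: the corollary is stated as an immediate consequence of Theorem~\ref{theorem:no contracted cycles}, since a genus $0$ tropical curve has no cycles and hence no contracted cycles. Nothing further is needed.
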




\section{Genus one results}\label{sec: genus 1}
We consider balanced functions on tropical curves of genus $1$, focusing on the case where the cycle is contracted. Our goal is to \emph{discover} well-spacedness by studying modifiability.

\subsection{The well-spacedness condition} 
We recall the statement of well-spacedness. Following~\cite{RSW17B}, the condition here differs slightly from Speyer's definition outside the trivalent case.

In this section, let $\Gamma$ denote a tropical curve of genus $1$. It has a unique cycle, and the vertices on this cycle are called \emph{cycle vertices}. Every point of $\Gamma$ lies on a unique path leading to the cycle. Suppose
\[
F\colon \Gamma \to T
\]
is a harmonic map that is constant on the cycle. A point $P \in \Gamma$ is a \emph{critical point} if $F$ is constant along the path from $P$ to the cycle but non-constant on some edge incident to $P$. Intuitively, a critical point is the first point along a path from the cycle at which $F$ begins to vary.

\begin{definition}
Suppose $F\colon \Gamma \to \mathbb{R}$ is balanced, and let $V$ be a critical point of $\Gamma$. The \emph{flag number} of $F$ at $V$, denoted $n(V)$, is the number of edges or rays incident to $V$ that are not contracted by $F$. We say $V$ is a \emph{simple} critical point if $n(V) = 2$.
\end{definition}

In what follows, the path from a critical point to the cycle of $\Gamma$ will be called a \emph{critical path}. 

\begin{definition}
    Suppose $F\colon \Gamma \to \RR$ is a parameterized tropical curve of genus $1$, and assume that $F$ is constant on an open neighborhood of the cycle. Then $F$ is \emph{well-spaced} if the sum of the flag numbers of the critical vertices on critical paths of minimal length is at least $3$.
\end{definition}

If $\Gamma$ is trivalent and $V$ is a critical point of a balanced map, then $n(V)$ must be $2$. Thus, when $\Gamma$ is trivalent, the condition says that there are at least two critical paths of minimal length, which is the condition stated in~\cite{Sp07}. The flag number will only be important outside the trivalent case. Our goal is the following restatement of Theorem~\ref{thm: well-spacedness}.

\begin{theorem}
Let $F\colon\Gamma\to\RR$ be a balanced function on a tropical curve of genus $1$. Assume that $F$ is constant on a neighborhood of the cycle. Then $F$ is $H$-modifiable, if and only if it is well-spaced. In particular, it is realizable, if and only if it is well-spaced. 
\end{theorem}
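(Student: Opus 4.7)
The plan is to prove both directions of the biconditional; the realizability conclusion then follows from the equivalence of $H$-modifiability and realizability established in Section~\ref{sec: modifications}. First I would reduce to the trivalent case using Theorem~\ref{lemma:limit}: by a small deformation that resolves each higher-valent contracted vertex into a cluster of trivalent vertices joined by short contracted edges, one may assume $\Gamma$ is trivalent, and the flag-number formulation of well-spacedness is designed precisely so that the status is preserved under such a limit and its inverse. In the trivalent regime, well-spacedness simplifies to the statement that at least two critical paths attain the minimum length $a$.

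For the forward direction, assume $\Gamma$ is trivalent and well-spaced. Let $V_1, V_2$ be cycle vertices from which minimum critical paths of length $a$ emanate, terminating at critical points $A_1, A_2$. I would construct an $H$-modification by attaching a ray $\ell$ to the target $\RR$ at the image of the cycle (the point $0$), placing a vertex $N$ at distance $a$ along $\ell$, and extending each minimum critical path past $A_i$ with a new ray mapping into $\ell$ beyond $N$ with slope $1$. The cycle must then cover an initial subgraph of $\ell$ as a harmonic fold, with the images of $V_1, V_2$ in the tree $T_0$ attached at $0$ of the modified target chosen flexibly so as to match the cycle's arc lengths. In favorable cases this is a degree-$2$ cover of $[0,N]$ with branching at $0$ and $N$; in general one introduces intermediate vertices on $\ell$ and higher-degree folds so that the two arcs of the cycle between $V_1$ and $V_2$ lift to harmonic zig-zags of the correct total length. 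The remaining non-minimum critical paths and the trees hanging off them are handled by Proposition~\ref{theorem:trees}, and the pieces are assembled via Lemma~\ref{lemma:glueing}. Local realizability at each new vertex reduces to the standard fact that Hurwitz numbers for $\PP^1$ covers with prescribed ramification and branching are positive, which holds by the balancing of $F$.

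For the converse, suppose an $H$-modification $\hat F\colon\hat\Gamma\to\hat T$ is given. Finiteness of $\hat F$ forces the cycle $\sigma$ (still embedded in $\hat\Gamma$) to map non-constantly into the tree $T_0\subset\hat T$ attached at $0\in\RR$, so $\hat F(\sigma)$ contains vertices of $T_0$ at positive distance from $0$. Let $N$ be a vertex of $\hat F(\sigma)$ at which the cover first branches moving outward from $0$, and let $\alpha = d(0,N)$. Local realizability at $N$, together with balancing and the fact that cycle preimages must contribute to $\hat F^{-1}(N)$ with slopes dictated by the topology of the cycle fold, forces the total flag count at preimages of $N$ lying in $\Gamma\subset\hat\Gamma$ to be at least three. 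Tracing these flags back through the commutative square, they correspond to critical-path flags at distance $\alpha$ from the cycle in $\Gamma$, from which I would conclude $\alpha = a$ and deduce well-spacedness.

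The principal obstacle is the combinatorial incommensurability in the forward direction: in general the cycle length $L$ is not simply related to $a$, and the cycle vertices $V_1, V_2$ need not be antipodal, so the naive degree-$2$ fold of the cycle onto $[0,N]$ will fail. Circumventing this requires flexibility in the choice of target tree, allowing multiple intermediate vertices on $\ell$ and higher-degree fold patterns, and may need an appeal to Theorem~\ref{lemma:limit} via a density argument on edge lengths. A secondary difficulty is verifying that the trivalent reduction interacts correctly with the non-trivalent flag-number formulation, which is the feature of the present statement that genuinely generalizes Speyer's trivalent-only original.
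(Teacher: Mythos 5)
Your proposal has the right overall architecture (two directions, base constructions plus gluing, closedness of realizability for reductions), but both halves have genuine gaps. In the sufficiency direction: (a) the reduction to the trivalent case is run in the wrong direction for the flag-number case. Theorem~\ref{lemma:limit} only transfers realizability \emph{to} limits, so you would need to exhibit the given well-spaced map as a limit of realizable trivalent ones; but if the well-spacedness comes from a single $4$-valent critical point with three non-contracted flags, any resolution into trivalent vertices forces the new connecting edge to be non-contracted (its slope is minus the sum of the slopes grouped on one side, which is never zero), so the resolved trivalent map has a unique simple minimal critical path, is \emph{not} well-spaced, and is in fact non-realizable. The paper therefore keeps this as a separate base case (Lemma~\ref{lem: generic-lengths}(ii), Proposition~\ref{lem: genus 1 with 4-valent-critical-point}), whose modification needs slope $2$ on the contracted edge and a Riemann--Hurwitz check. (b) The step you call the principal obstacle -- how the cycle maps -- is exactly the missing construction, and the ``incommensurability'' worry dissolves once each of the two arcs of the cycle is folded onto its \emph{own} new ray attached at the common image of the two attachment points, turning at the arc's midpoint with extra rays added there for harmonicity (Proposition~\ref{prop:g1_speyer_two_legs} and Figure~\ref{fig: basic-Speyer}); no relation between the cycle length and the minimal length, no antipodality, and no density/limit argument is needed. (c) Non-minimal critical paths cannot all be delegated to Proposition~\ref{theorem:trees} plus Lemma~\ref{lemma:glueing}: paths that are longer than the minimum but shorter than the minimum plus twice the relevant contracted distances collide with the slope-$2$ part of the modification, and handling them is precisely the content of the slope-adjustment / ``there and back'' trick (Remark~\ref{rem: slope-adjustment}) and of the induction via cycle-decomposing modifications and ``very long'' attachments. (d) ``Hurwitz numbers with prescribed ramification and branching are positive'' is not a standard fact (such counts can vanish); local realizability must be verified vertex by vertex, as the paper does with explicit covers and Riemann--Hurwitz arguments.

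In the necessity direction your sketch has the right shape -- finiteness forces the cycle to fold, producing extra preimages -- but the quantitative steps are asserted rather than proved. You need (i) that along a \emph{simple} minimal critical path the modification has slope $1$ and no interior turning points (this uses Riemann--Hurwitz at the simple critical point together with minimality, and is what forces your branch distance $\alpha$ to equal the minimal length $a$ rather than something smaller), and (ii) a mechanism converting a second preimage on the cycle into a second critical path of length \emph{at most} $a$; in the paper this is the lifting/turning-point machinery of Lemma~\ref{lemma:copying_paths} and Proposition~\ref{prop:copying_paths}, where the inequality on lengths (slopes $\geq 1$ on the copied path) is what contradicts unique minimality or simplicity. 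Your claim that local realizability at $N$ forces a total flag count of at least three among preimages of $N$ lying in $\Gamma$ is not justified as stated, and in the non-trivalent setting the contradiction must be phrased through the flag-number bookkeeping at a single higher-valent critical point rather than through two distinct critical paths. As written, the proposal identifies the correct difficulties but does not yet supply the constructions or the path-copying argument that resolve them.
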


\subsection{Necessity of the well-spacedness condition}\label{sec: genus-one-necessity} 

We will often study directed paths, which we will denote by $w$, and will typically drop the adjective ``directed'' but specify the direction explicitly when needed. A path consisting of $k$ edges has finitely many vertices $P_1,\ldots,P_{k+1}$, and we will sometimes use the notation\footnote{Technically, a path need not be determined by its vertices when the graph has parallel edges. Parallel edges can always be removed by subdividing and adding $2$-valent vertices to the vertex set. This subtlety will not affect our discussion, but we may assume such a subdivision has been made.} 
\[
w = \overline{P_1,\ldots,P_{k+1}}.
\] 
Given such a path, we say $P_1$ is the \emph{starting vertex} of $w$, $P_{k+1}$ is the \emph{terminal vertex}, and any other vertices are the \emph{interior vertices}.

We now introduce two important notions required for the analysis of modifiability. 

\begin{definition}
Let $F\colon\Gamma\to \Sigma$ be a harmonic map. Let $w$ be a path in $\Sigma$ consisting of the sequence of edges $e'_1,\dots,e'_k$. A path $v$ in $\Gamma$, having the sequence of edges $e_1,\dots,e_l$, is a \emph{lifting} of $w$ if $l = k$ and 
\[
F(e_i) = e'_i \quad \text{for each } i = 1,\dots,k.
\]
\end{definition}

\begin{definition}
Let $F\colon\Gamma\to \Sigma$ be a map, and let $w$ be a path in $\Gamma$ consisting of a sequence of edges $e_1,\dots,e_k$ joining vertices $P_1,\dots,P_{k+1}$. An interior vertex $P_i$ of $w$ is called a \emph{turning point} if $F$ is not contracted on $e_{i-1}$ and there exists an edge $e \in \Star_\Gamma(P_i)$, distinct from $e_{i-1}$, such that 
\[
F(e) = F(e_{i-1}).
\]
\end{definition}

The next lemma uses these notions to show that harmonicity allows us to ``copy'' paths, eventually producing the multiple critical paths that appear in the well-spacedness condition. 

\begin{lemma}\label{lemma:copying_paths}
Let $F\colon \Gamma\to \Sigma$ be harmonic. Let $w$ be a path from a vertex $A$ to a vertex $B$ such that $F$ is injective on $w$. Then, for any $B' \in \Gamma$ with $F(B') = F(B)$, there exists a path $v$ in $\Gamma$ lifting $F(w)$ that terminates at $B'$. 

Moreover, if $B$ is a turning point and $B' = B$, it is possible to choose $v \neq w$.
\end{lemma}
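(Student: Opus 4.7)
The plan is to construct $v$ by lifting $F(w)$ edge by edge, starting at $B'$ and proceeding in reverse; harmonicity, via the pure local degree property, supplies a preimage edge at each step. Label the vertices of $w$ as $A = P_1, P_2, \ldots, P_{k+1} = B$ with edges $e_i$ joining $P_i$ to $P_{i+1}$. Since $F$ is injective on $w$, the image edges $F(e_1), \ldots, F(e_k)$ are pairwise distinct in $\Sigma$, so consecutive image edges never coincide.

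Set $Q_{k+1} := B'$. Given $Q_{j+1}$ with $F(Q_{j+1}) = P_{j+1}$, I would select an edge $e'_j \in \Star_\Gamma(Q_{j+1})$ with $F(e'_j) = F(e_j)$; the opposite endpoint $Q_j$ of $e'_j$ is automatically a preimage of $P_j$. Iterating for $j = k, k-1, \ldots, 1$ yields a sequence of edges $e'_1, \ldots, e'_k$ joining the vertices $Q_1, \ldots, Q_{k+1}$, and this is the path $v$: it terminates at $B'$ and satisfies $F(v) = F(w)$ by construction. The crucial justification is the existence of $e'_j$, which is exactly where harmonicity enters. By purity of the local degree at $Q_{j+1}$, the local degree in the direction $F(e_j)$ equals the local degree in any other direction at $F(P_{j+1})$. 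For $j < k$ this common value is positive because the already constructed edge $e'_{j+1}$ is a non-contracted edge incident to $Q_{j+1}$; for the base case $j = k$, the same purity argument at $B'$ produces the required edge, using that $B'$ is a vertex of $\Gamma$ lying over $F(B)$ and must carry positive local degree (since we are trying to lift along an edge from $B'$).

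For the last clause, the turning-point condition at $B$ furnishes an edge $e \in \Star_\Gamma(B)$ distinct from $e_k$ with $F(e) = F(e_k)$; taking $B' = B$ and choosing $e'_k := e$ at the very first step of the construction forces $v \neq w$ regardless of later choices. The hypothesis that no interior vertex of $w$ is a turning point plays a uniqueness role: at each interior $P_i$ the only edge of $\Star_\Gamma(P_i)$ mapping to $F(e_{i-1})$ is $e_{i-1}$ itself, which pins down what the backward lift does whenever it passes through an interior vertex of $w$. I expect the main friction to be in the base case of the induction, namely checking that $B'$ really does carry a non-contracted edge in the direction $F(e_k)$; this reduces to invoking purity at $B'$ correctly, together with the observation that $F$ is a surjective harmonic map and $B'$ is a vertex over $F(B)$, and is the only place where the interplay between harmonicity and the choice of starting preimage needs care.
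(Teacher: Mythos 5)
Your proposal is correct and follows essentially the same route as the paper: both construct $v$ by lifting $F(w)$ edge by edge starting from $B'$ and working backwards, using purity of the local degree (local surjectivity of the harmonic map) to produce a preimage edge at each step, and both secure the final clause by choosing $e'_k \neq e_k$ at the first step when $B' = B$ is a turning point. The base-case positivity issue you flag is handled in the paper only by the blanket assertion that harmonic maps are locally surjective; in the intended applications $\hat F$ is a finite modification, so every vertex has positive local degree and the concern evaporates.
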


\begin{proof}
Let $w$ be a path from $A$ to $B$ consisting of edges $e_1,\dots,e_k$. Then $F(w)$ is a path in $\Sigma$. By injectivity, the edges $F(e_1),\dots,F(e_k)$ are distinct, with vertices $F(A),P_2,\dots,P_k,F(B)$.  

Since $F(B') = F(B)$, there exists an edge $e'_k \in \Star_{\Gamma}(B)$ with 
\[
F(e'_k) = F(e_k),
\] 
which follows from the local surjectivity of harmonic maps. If $B' = B$ and $B$ is a turning point, we can ensure that $e'_k \neq e_k$. This guarantees the second statement of the lemma.  

Let $F(e'_k)$ have vertices $F(B)$ and $P'_k$. Since $F(e_{k-1})$ is incident to $P'_k$, harmonicity of $F$ ensures the existence of an edge $e'_{k-1}$ with vertices $P'_k$ and $P'_{k-1}$ satisfying 
\[
F(e'_{k-1}) = F(e_{k-1}).
\] 
Inductively, this produces edges $e'_1,\dots,e'_k$ with $B$ a vertex of $e'_k$ and $F(e_i) = F(e'_i)$. Then the path $v$ in $\Gamma$ consisting of this sequence of edges is a lifting of $w$ that terminates at $B'$.
\end{proof}

We now record a technical consequence of the lemma. 

\begin{proposition}\label{prop:copying_paths}
Let $F\colon \Gamma \to \RR$ be a balanced function, and let $\hat{F}\colon \hat{\Gamma} \to \hat{\Sigma}$ be an $H$-harmonic modification of $F$. Let $A$ be a simple critical vertex not on the cycle, let $B$ be a turning point with respect to $\hat F$, and let $w$ be an $F$-constant path of edges with no interior $\hat F$-turning vertices or $F$-critical vertices. Then the following hold:
\begin{enumerate}[(i)]
    \item The map $\hat F$ is injective on $w$ with slope $1$. In particular, $\hat F(w)$ is a path in $\hat{\Sigma}$.
    \item For any point $B'$ in $\Gamma$ with $\hat F(B') = \hat F(B)$, there exists an $F$-critical vertex $A'$ and a path $v$ from $A'$ to $B'$ that lifts $\hat F(w)$, and the length of $v$ is at most that of $w$.
    \item When $B = B'$, we can ensure that $A' \neq A$ in the construction above.
\end{enumerate}
\end{proposition}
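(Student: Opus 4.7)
The plan is to reduce the proposition to Lemma~\ref{lemma:copying_paths} applied to the modified map $\hat F$ and the path $w$. Part (i) verifies the hypotheses of that lemma, after which (ii) and (iii) follow as direct applications.

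For (i), the path $w$ is $F$-constant, so its image $\hat F(w)$ lies entirely in the fiber tree $T \subset \hat\Sigma$ above the point $F(A) \in \RR$. Since $T$ is a tree, any failure of injectivity of $\hat F$ on $w$ would force the image path to fold back on itself, producing at some interior vertex a second edge in the star whose $\hat F$-image matches the preceding edge; this is precisely a $\hat F$-turning point, contradicting the hypothesis. For the slope-$1$ claim, the no-interior-turning-point condition together with harmonicity of $\hat F$ forces the slope to be constant along $w$: at each interior vertex, the pure-degree identity for harmonic maps, combined with the absence of any \emph{turn-back} edge, fixes the outgoing $w$-slope to equal the incoming one. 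The value of this constant is pinned to $1$ using the simple critical structure at $A$, combined with the local Hurwitz (ramification) data encoded by local realizability of $\hat F$ at $A$; this is the delicate technical input.

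For (ii), applying Lemma~\ref{lemma:copying_paths} to $\hat F|_w$ produces, for every $B' \in \Gamma$ with $\hat F(B') = \hat F(B)$, a path $v \subset \hat\Gamma$ lifting $\hat F(w)$ and terminating at $B'$. We see that $v \subset \Gamma$ because the added part of a tropical modification consists of rays with no interior bounded edges, while $v$ uses only bounded edges. The initial vertex $A'$ of $v$ is then $F$-critical: its first outgoing edge maps into $T$ and is thus $F$-contracted, while the no-interior-$F$-critical-vertex hypothesis on $w$ prevents the backward trace of the lift from terminating at a vertex whose entire star is $F$-contracted. The length bound follows from (i): each edge of $v$ has $\hat F$-slope at least $1$ mapping onto an edge of $\hat F(w)$ of slope exactly $1$, so the metric length of each edge of $v$ is at most that of its $w$-counterpart, and the claim follows by summation. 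Part (iii) is then the second half of Lemma~\ref{lemma:copying_paths}: since $B$ is a $\hat F$-turning point, when $B' = B$ we can select $v \neq w$, forcing $A' \neq A$ because the backward trace must start from an edge at $B$ distinct from the last edge of $w$.

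The main technical obstacle is the slope-$1$ identification in (i); without it the length comparison in (ii) breaks down and the bookkeeping of the subsequent steps is defeated. The identification relies on a careful combination of the simple critical structure at $A$, the local Hurwitz constraints imposed by local realizability of $\hat F$, and the harmonicity propagation along $w$ ensured by the no-interior-turning-or-critical hypothesis.
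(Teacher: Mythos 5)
Your outline (establish slope $1$ along $w$, then feed $\hat F(w)$ into Lemma~\ref{lemma:copying_paths} and compare lengths via slope $\geq 1$ against slope $1$) is the same as the paper's, but the crux of part (i) is never actually proved. You write that the common slope is ``pinned to $1$'' using the simple critical structure at $A$ together with the local Hurwitz data, and you flag this as the delicate technical input --- but that input is precisely the content of the claim and has to be supplied. The argument is: since $A$ is simple, there are exactly two non-$F$-contracted flags $\ell_1,\ell_2$ at $A$, both of $F$-slope $s$ by balancing, so the local degree of $\hat F$ at $A$ is $s$; local realizability of the modification produces a degree-$s$ map of marked rational curves at $A$ with two totally ramified points corresponding to $\ell_1$ and $\ell_2$, and Riemann--Hurwitz then forbids any further ramification, so the point corresponding to the first edge $e_1$ of $w$ is unramified and $\hat F$ has slope $1$ on $e_1$. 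Moreover, your propagation step is stated in the wrong order and is false as stated: at a non-turning interior vertex, the absence of a turn-back edge only identifies the local degree with the \emph{incoming} slope; the outgoing edge of $w$ may share its image direction with other edges at that vertex, so its slope can drop strictly below the incoming one. Constancy along $w$ only follows once the incoming slope is known to be $1$ (local degree $1$ forces every edge over that direction to have slope $1$), which is why the Riemann--Hurwitz step at $A$ must come first, and then one propagates vertex by vertex using the no-turning hypothesis.

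Two further steps rest on incorrect justifications. In (ii), the claim that $v\subset\Gamma$ because ``the added part of a tropical modification consists of rays with no interior bounded edges'' is wrong: iterated modifications attach trees with bounded edges (the explicit modifications constructed later in the paper do exactly this), so $\hat\Gamma\setminus\Gamma$ need not be a union of bare rays. The correct argument goes through images: $\hat F(A')=\hat F(A)$ lies in the original copy of $\RR\subset\hat\Sigma$, which forces $A'$, and then the interior vertices of $v$, to lie in the original $\Gamma$; the fact that the interior vertices of $w$ are not $F$-critical is what places their images outside $\RR$ and makes the first edge of $v$ $F$-contracted, so that $A'$ is critical. In (iii), $v\neq w$ does not by itself give $A'\neq A$: two distinct lifts can share their starting vertex. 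This is exactly where the hypothesis that $A$ is not a cycle vertex --- which you never use --- enters: if $A'=A$, then $w$ and $v$ are distinct paths sharing both endpoints, producing a cycle in $\Gamma$ through $A$, contradicting that $A$ does not lie on the (unique) cycle. So while the architecture of your argument is right, the missing Riemann--Hurwitz computation, the reversed propagation logic, and the two faulty justifications in (ii) and (iii) are genuine gaps.
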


\begin{proof}
Let $w$ be the path described in the statement, with vertices $A, P_1, \dots, P_{k-1}, B$ and edges $e_1, \dots, e_k$. We first examine the local structure at $A$. Since $A$ is a critical point, the balancing condition implies there are at least two edges or rays, $\ell_1$ and $\ell_2$, incident to $A$ that are not contracted by $F$. Because $A$ is a \emph{simple} critical vertex, there are exactly two such edges, and the slopes of $F$ along $\ell_1$ and $\ell_2$ coincide; denote this slope by $s$. The local degree of $\hat F$ at $A$ is the same as that of $F$, which we also denote by $s$.

Next, we consider the slope of $\hat F$ along $e_1$ and show that it must be $1$. Analyzing the corresponding local map of algebraic curves at $A$, we have a degree-$s$ map with two fully ramified points corresponding to $\ell_1$ and $\ell_2$. By the Riemann--Hurwitz formula, no further ramification is possible, so the slope of $e_1$ under $\hat F$ is necessarily $1$.

We now turn to the interior vertices and edges of $w$ and claim that their local degrees under $\hat F$ are also $1$. Suppose, for contradiction, that $P_i$ is the first vertex where the local degree is not $1$. The edge $e_i$ has slope $1$ by assumption, so there must be some other edge attached to $P_i$ with the same image under $\hat F$. This would make $P_i$ a turning vertex, contradicting our assumptions. Hence, $\hat F$ has slope $1$ along the entire path $w$, proving the first claim.

For the second claim, Lemma~\ref{lemma:copying_paths} guarantees a path $v$ in $\hat\Gamma$ starting at a vertex $A'$ and ending at $B'$ that lifts $\hat F(w)$. Write $v = \overline{A', Q_1, \dots, Q_{k-1}, B'}$. We claim that $A'$ lies in the original $\Gamma$. Indeed, since $A$ is a critical point, $F(A)$ lies in the original target $\mathbb{R} \subset \hat\Sigma$, so $\hat F(A') \in \mathbb{R}$ and $A'$ must lie in $\Gamma$. If any of the vertices $Q_i$ left $\Gamma$, the path could not return to $B'$, so they must also lie in $\Gamma$.

Since $w$ has no interior critical vertices, for all interior vertices $P_i$ and $Q_i$, we have $\hat F(P_i) = \hat F(Q_i)$ lying outside the original copy of $\mathbb{R}$ in $\hat\Sigma$. Thus, the edge $A'Q_1$ is contracted by $F$, and since $\hat F(A') \in \mathbb{R}$, $A'$ is an $F$-critical vertex.

For the third claim, if $B = B'$, then $A \neq A'$, since otherwise $w$ and $v$ would share both endpoints, forming a cycle in $\Gamma$ and contradicting the assumption that $A$ is not a cycle vertex.

Finally, regarding lengths, note that $w$ and $v$ have the same image under $\hat F$. The slope of $\hat F$ on $w$ is $1$, while on edges of $v$ it may be any integer $\ge 1$. Therefore, the length of $w$ is at least that of $v$.
\end{proof}

We now prove necessity of the well-spacedness condition. 

\begin{theorem}[Necessity]\label{proposition: speyer contracted 1}
Let $F\colon \Gamma\to\RR$ be a realizable balanced function on a tropical curve of genus $1$. Assume an open neighborhood of the cycle is contracted. Then $F$ is well-spaced. 
\end{theorem}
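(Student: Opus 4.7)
The approach will be contrapositive, exploiting the equivalence of realizability and $H$-modifiability established in Section~\ref{sec: modifications}. Assume $F$ is $H$-modifiable but not well-spaced; the plan is to produce a second critical vertex at minimum distance $\ell$ from the cycle, contradicting uniqueness. The balancing condition at any critical vertex $V$ forces $n(V)\geq 2$, since the contracted flags contribute slope $0$ and the non-contracted slopes must sum to zero among themselves. Failure of well-spacedness therefore means the sum over minimal critical vertices of $n(V)$ is at most $2$, which isolates a \emph{unique} minimal critical vertex $A$ with $n(A)=2$. Let $w_0$ be its critical path of length $\ell$, terminating at a cycle vertex $B$. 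Fix an $H$-modification $\hat F\colon \hat\Gamma\to\hat T$ given by realizability.

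Next I would produce a turning vertex on the cycle. Since $\hat T$ is a modification of $\RR$, it is a tree, but $C\subset\Gamma\subset\hat\Gamma$ is a loop mapping finitely into $\hat T$; no loop embeds into a tree, so $\hat F|_C$ must fold and admit at least one $\hat F$-turning vertex. I would choose a turning vertex $B'$ on $C$ so that one cycle arc $\sigma$ from $B$ to $B'$ has no interior $\hat F$-turning vertex. The neighborhood-contraction hypothesis makes $\sigma$ free of interior $F$-critical vertices, and uniqueness of $A$ does the same for $w_0$. After arranging (possibly by shortening to the first interior turning vertex in $w_0$ and re-selecting $B'$) that the concatenation $w:=w_0\cdot\sigma$ from $A$ to $B'$ has no interior $\hat F$-turning vertex, the path $w$ satisfies the hypotheses of Proposition~\ref{prop:copying_paths}.

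Applying that proposition, part~(i) gives that $\hat F$ is injective of slope $1$ along $w$, so $\hat F(w)$ is a path of length $\ell+|\sigma|$ in the fiber of $\hat T\to\RR$ over $0$. Part~(iii), invoked with the proposition's $B$ and $B'$ both set to our turning vertex $B'$, produces a critical vertex $A'\neq A$ together with a lift $v$ of $\hat F(w)$ in $\hat\Gamma$ from $A'$ to $B'$ satisfying $|v|\leq|w|=\ell+|\sigma|$.

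The main obstacle is the final step: converting the length bound $|v|\leq\ell+|\sigma|$ into the equality $\ell'=\ell$, where $\ell'$ is the critical-path length of $A'$. I would project $v$ through the retract $\hat\Gamma\to\Gamma$ to obtain an $F$-contracted continuous map $\bar v$ from $A'$ to $B'$, of length $|\bar v|\leq|v|$. Since $\Gamma$ has a unique cycle and the path to it from any $F$-contracted vertex is unique, $\bar v$ factors combinatorially as the critical path of $A'$ (of length $\ell'$) followed by a cycle arc from its foot $B''$ to $B'$, up to possible backtracking. Exploiting that modification-ray excursions of $v$ contribute to $|v|$ without contributing to $|\bar v|$, and choosing $B'$ as the closest $\hat F$-turning vertex to $B$ along both cycle directions to control the arc used by $\bar v$, a careful length comparison forces $\ell'\leq\ell$, whence $\ell'=\ell$ by minimality. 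Combined with $A'\neq A$, this contradicts uniqueness. The principal difficulty is this length accounting: one must properly handle backtracking of $\bar v$, discount modification-ray traversals, and contend with the fact that the cycle arc used by $\bar v$ need not be $\sigma$ itself, which is where the closest-turning-vertex choice and the slope-$1$ injectivity from Proposition~\ref{prop:copying_paths}(i) are decisive.
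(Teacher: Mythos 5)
Your reduction of the failure of well-spacedness to a unique simple minimal critical vertex $A$, and your plan of using Proposition~\ref{prop:copying_paths} to manufacture a second critical vertex close to the cycle, are exactly the paper's strategy; but the last step is a genuine gap, and it is created by your choice of path. Feeding the concatenation $w=w_0\cdot\sigma$ into the proposition only yields a critical vertex $A'\neq A$ and a lift $v$ of $\hat F(w)$ ending at $B'$ with $|v|\leq \ell+|\sigma|$, hence only $\operatorname{dist}(A',\text{cycle})\leq \ell+|\sigma|$ -- which contradicts nothing, since failure of well-spacedness permits other critical vertices at any distance strictly greater than $\ell$. The repair you sketch cannot close this. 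First, the proof of Proposition~\ref{prop:copying_paths} already places the lift $v$ inside $\Gamma$, so there are no ``modification-ray excursions'' to discount. Second, and decisively, you have no control over the portion of $v$ that lifts $\hat F(\sigma)$: nothing forces it to lie on the cycle. In the modifications the paper actually builds (Figure~\ref{fig: basic-Speyer}, Remark~\ref{rem: slope-adjustment}) the images of non-minimal critical paths overlap the image of the cycle in the added target tree, so the $\sigma$-lifting part of $v$ can keep descending a longer critical path; the foot of $A'$ is then reached only after length possibly exceeding $\ell$, and choosing $B'$ as the nearest turning vertex does not help because the edge-by-edge lifting in Lemma~\ref{lemma:copying_paths} gives no control over which preimages are selected. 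So $\ell'\leq\ell$ is not forced by your construction. (A smaller omission: ``shorten to the first interior turning vertex of $w_0$'' does not cover the case where the junction $B$ itself is a turning vertex relative to the last edge of $w_0$; that case is easy -- apply part~(iii) at $B$ with the path $w_0$ -- but it must be treated.)

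The paper keeps the length bound at $\ell$ by never leaving $w_0$. It first shows that no vertex of $w_0$ beyond $A$ (including the cycle point $B$) is a turning point: at the first such vertex $P$, part~(iii) applied to the initial subpath $A\ldots P$ gives $A'\neq A$ with a lift of length at most $\operatorname{dist}(A,P)$, whence $\operatorname{dist}(A',\text{cycle})\leq\operatorname{dist}(A,P)+\operatorname{dist}(P,B)=\ell$, contradicting uniqueness of the minimal critical path. With slope $1$ and no turning at $B$, the local degree of $\hat F$ at $B$ is $1$, so the two cycle edges at $B$ have distinct images; an intermediate-value argument around the contracted cycle, whose image lies in the tree $\hat T$, then produces a point $B'\neq B$ on the cycle with $\hat F(B')=\hat F(B)$, and part~(ii) applied to $w_0$ alone gives a critical vertex $A'$ and a lift of $\hat F(w_0)$ ending at $B'$ of length at most $\ell$; since $B'$ lies on the cycle this bounds $\operatorname{dist}(A',\text{cycle})\leq\ell$, and $A'\neq A$ because otherwise there would be two short paths from $A$ to distinct cycle points, impossible for a genus-one graph. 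If you wish to keep the cycle-arc concatenation, you would have to prove in addition that the lift of the $\sigma$-portion stays on the cycle, which amounts to the same local-degree analysis; the economical fix is to adopt the paper's route for the final step.
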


\begin{proof}
We assume the existence of an $H$-modification $\hat{F}\colon\hat{\Gamma} \to \hat{\Sigma}$ and show that there cannot be a simple critical point whose associated critical path has the unique minimal length among all critical paths. We argue by contradiction and, to set it up, suppose such a path exists and is given by $w$ with vertices ${P_1,\dots,P_n}$, with $P_1$ the only simple critical point on this path, and $P_n$ is the only point on the cycle. 

We first claim that none of $P_2,\dots,P_n$ is a turning point. If $P_i$ is a turning point, Proposition~\ref{prop:copying_paths} gives us a new path $\overline{Q_1,Q_2,...,Q_i}$ with $F(Q_k) = F(P_k)$ and such that $Q_1$ is a critical point. By the length considerations in this proposition, we contradict the length-minimumity or simplicity hypotheses.

We now examine the algebraic map at the point $P_n$. We claim that the local degree is $1$. By applying again Proposition~\ref{prop:copying_paths}, the slope of $\hat{f}$ for the edge $P_{n-1}P_n$ is $1$. Suppose that the local degree at $P_n$ is greater than $1$, a different edge in $\hat{\Gamma}$ attached to $P_n$ has the same image as $P_{n-1}P_n$. This contradicts the turning point hypothesis. 

Now we can conclude the argument. Consider the subgraph consisting of the following three edges: $P_{n-1}P_n$, $P_n A$, and $P_n B$, where $A$ and $B$ are the two vertices on the cycle incident to $P_n$. Since the local degree at $P_n$ is $1$, we know the images of $P_nA$ and $P_nB$ differ under $\hat F$. Now go around the cycle from $P_n$. By continuity of the map $\hat F$, there must be a point on the cycle distinct from $P_n$, whose image has the same image under $\hat F$ as $P_n$. By Proposition~\ref{prop:copying_paths}, there is a path $\overline{R_1,\dots,R_n}$ such that $\hat{F}(R_i) = \hat{F}(P_i)$ for all $i$, with $R_1$ is a critical point, and the length of this path is at most that of $w$. This gives the contradiction.
    
\end{proof}

\subsection{Preparation disguised as illustrative examples}

We present several examples illustrating realizability and well-spacedness via modifications. These examples are intended to aid the reader’s intuition, but they also serve as preparatory base cases for the proof of sufficiency of well-spacedness in the next section.

\subsubsection{A single ray attached to the cycle}

As a first example, we explain how realizability may be obstructed. We begin with the following lemma.

\begin{lemma}
Let $\hat{F}\colon\hat{\Gamma}\to\hat{T}$ be a modification of $F\colon\Gamma\to\mathbb{R}$ with no contracted edges. Then there exists a point $P$ on the contracted subgraph of $\Gamma$ that is a turning point with respect to some path from a critical point $A$ to $P$.
\end{lemma}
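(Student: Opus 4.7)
The plan is to exploit the fact that the target of the modification, $\hat T$, is a tree, so that the unique cycle of $\Gamma$ cannot be mapped injectively into $\hat T$ without some folding. That folding is exactly a turning point.

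First, I would record two trivial observations. A modification of $\RR$ is obtained by attaching rays; attaching rays to a tree produces a tree, so $\hat T$ is a tree. Similarly, the unique cycle $C\subset\Gamma$ (genus one) persists as a subgraph of $\hat\Gamma$, since modifications only add rays and so do not alter the cycle structure. Fix an orientation of $C$ and label its vertices consecutively as $Q_0,Q_1,\dots,Q_n=Q_0$ with edges $a_1,\dots,a_n$, where $Q_0$ is chosen to be the attachment point (or a point reachable by a contracted path from $A$).

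The main step is the observation that, since no edge of $\hat\Gamma$ is contracted, each $a_i$ maps to a genuine edge $\hat F(a_i)$ of $\hat T$. Thus the image $\hat F(Q_0)\to\hat F(Q_1)\to\cdots\to\hat F(Q_n)=\hat F(Q_0)$ is a non-trivial closed walk in the tree $\hat T$. Any closed walk in a tree must contain an immediate backtrack: reducing adjacent cancelling pairs $e,\,e^{-1}$ produces the trivial walk, since a tree is simply connected, and this reduction can only succeed if some such pair is present initially. Hence there exists an index $i$ such that $\hat F(a_i)=\hat F(a_{i+1})$ as edges of $\hat T$, while $a_i\neq a_{i+1}$ as edges of $\hat\Gamma$.

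Set $P:=Q_i$. Since $P$ lies on the cycle $C$, and $C$ is contained in the $F$-contracted subgraph of $\Gamma$ (the cycle is contracted by hypothesis), $P$ lies on the contracted subgraph as required. For the witnessing path, first traverse the contracted path from $A$ to $Q_0$ and then move along $C$ to $P$ along the direction terminating with the edge $a_i$; appending the next edge $a_{i+1}$, which has the same $\hat F$-image and is distinct from $a_i$, shows that $P$ is a turning point of this path in the sense of the earlier definition. The one place to be slightly careful is to check that the two candidate edges at $P$ really are distinct in $\hat\Gamma$ and not identified under some accidental subdivision — but this is immediate since $a_i$ and $a_{i+1}$ sit on opposite sides of $P$ along $C$. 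The only conceptual point is the tree-backtracking lemma; everything else is bookkeeping.
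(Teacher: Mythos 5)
Your proof is correct and takes essentially the same route as the paper's: both traverse a closed walk on the contracted subgraph (the paper walks from $A$ back to $A$, you go once around the cycle) and observe that its image in the tree target must fold back on itself, and the fold vertex is the desired turning point. Your explicit tree-backtracking lemma is simply a sharper formulation of the step the paper dispatches with its informal appeal to the ``mean value theorem.''
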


\begin{proof}
A walk can be constructed on the contracted subgraph starting and ending at $A$. The modification $\hat{F}$ has the same value for the starting and ending points of the walk. By the mean value theorem, there is a turning point on this subgraph.
\end{proof}

We can now give the example in the form of the following:

\begin{proposition}
The map $F\colon\Gamma\to\mathbb{R}$ in Figure~\ref{fig: g1_speyer_one_leg} is not $H$-modifiable, and therefore not realizable.
\end{proposition}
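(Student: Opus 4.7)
The plan is to argue by contradiction: assume an $H$-modification $\hat F\colon \hat\Gamma \to \hat T$ of $F$ exists, and deduce that $\Gamma$ must contain a second critical vertex, contradicting the ``one leg'' configuration visible in Figure~\ref{fig: g1_speyer_one_leg}. This is essentially a specialization of the necessity proof of Theorem~\ref{proposition: speyer contracted 1}, repackaged so that the well-spacedness obstruction appears concretely.

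First, by the definition of $H$-modifiability, the map $\hat F$ is finite, so no edge of $\hat\Gamma$ is contracted. The hypothesis of the lemma proved immediately above is therefore met, and it supplies a point $P$ on the contracted subgraph of $\Gamma$ which is a turning point reached by a contracted path starting at the unique critical vertex $A$. I would then sharpen this by choosing $P$ to be the nearest such turning point to $A$, guaranteeing that the contracted connecting path $w$ inside $\Gamma$ has no interior $\hat F$-turning vertices; it automatically has no interior $F$-critical vertices either, since $A$ is by hypothesis the only critical point.

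With this setup, I would invoke Proposition~\ref{prop:copying_paths} applied to the path $w$, taking $B = B' = P$. Part (iii) of that proposition delivers a second critical vertex $A' \neq A$ in $\Gamma$, which directly contradicts the uniqueness of $A$. The proposition also ensures that $A'$ lies on the original graph $\Gamma$ rather than inside the modification, so the contradiction is internal to the given data $(\Gamma, F)$.

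The main obstacle I anticipate is bookkeeping rather than conceptual: one must verify that the geometric data of Figure~\ref{fig: g1_speyer_one_leg} makes $A$ a \emph{simple} critical vertex \emph{not on the cycle}, as required by Proposition~\ref{prop:copying_paths}. If instead $A$ sits on the cycle itself, then the radial path $w$ must be replaced by a walk along the cycle, and one argues exactly as in the closing step of the proof of Theorem~\ref{proposition: speyer contracted 1}: continuity of $\hat F$ around the contracted cycle forces the existence of a distinct point with the same image as $A$, and one copies the critical segment back from that point to produce a second critical vertex, completing the contradiction.
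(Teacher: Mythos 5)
Your argument is correct and follows the paper's own proof essentially verbatim: invoke the preceding lemma to obtain a turning point on the contracted subgraph, take the one closest to $A$, and apply Proposition~\ref{prop:copying_paths} with $B=B'$ to produce a second critical point, contradicting uniqueness. Your extra bookkeeping is harmless but unnecessary, since in Figure~\ref{fig: g1_speyer_one_leg} the critical point $A$ is visibly simple and lies off the cycle (it is joined to the cycle by the contracted edge $AB$).
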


\begin{figure}[h!]
\includegraphics{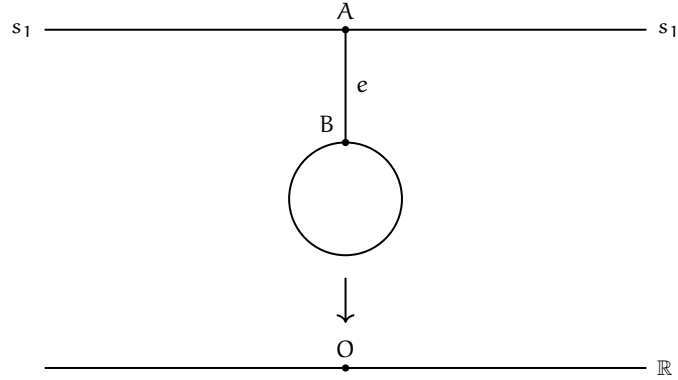}
\caption{A tropical map with a contracted cycle and a single non-well-spaceable critical path. To interpret the figure, note that the edge $AB$ and the cycle are both contracted. The map is linear of slope $s_1$ on the rays based at $A$, in the indicated direction.}\label{fig: g1_speyer_one_leg}
\end{figure}

\begin{proof}
Suppose $\hat{F}\colon\hat{\Gamma}\to\hat{T}$ is an H-harmonic modification of $F\colon\Gamma\to\mathbb{R}$. Then by the previous lemma, there is a turning point $P$ with respect to $\hat{F}$ and some path which lies on the contracted subgraph in $\Gamma$. Assume without loss of generality that $P$ is the closest turning point to $A$ on that path. By Proposition~\ref{prop:copying_paths}, there is another critical point distinct from $A$, which is a contradiction. 
\end{proof}

If instead the critical point had three outgoing non-constant directions, realizability holds. 

\begin{proposition}\label{lem: genus 1 with 4-valent-critical-point}
The map $F\colon\Gamma\to\RR$ in Figure~\ref{fig: g1_speyer_one_leg_threeflags} is $H$-modifiable and therefore realizable. 
\end{proposition}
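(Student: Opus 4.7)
The plan is to construct an explicit $H$-modification $\hat F\colon \hat\Gamma \to \hat T$ of $F$ and then invoke the main theorem of Section~\ref{sec: modifications}. Let $A$ denote the critical vertex, with three non-contracted outgoing flags of slopes $s_1, s_2 > 0$ and $s_3 = -(s_1+s_2)$, so that the local degree at $A$ under $F$ is $d := s_1+s_2$. Let $P$ be the cycle vertex joined to $A$ by the contracted critical path (of length $a$), and let $L$ be the length of the cycle.

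I will build $\hat T$ by adding a single new ray $\ell$ based at $F(A) = 0\in\RR$, together with additional rays along $\ell$ at certain heights to be chosen. In the source, I will modify as follows: (i) lift the critical path $AP$ to a segment mapping linearly into $\ell$ with positive slope, so that $P$ is sent to a point of $\ell$ above $0$; (ii) subdivide the cycle at a turn-around vertex $Q$ so that the two arcs from $P$ to $Q$ map as linear covers onto a common segment of $\ell$, making the cycle a degree-$d$ cover of this segment; and (iii) attach auxiliary rays at $A$, at $P$, at $Q$, and possibly at further subdivision vertices, matched by additional rays of $\hat T$, chosen so that the local degree balances and Riemann--Hurwitz holds at every vertex of $\hat\Gamma$. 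In particular, I expect to split the lifted critical-path direction at $A$ into $d-1$ flags whose slopes $m_1,\ldots,m_{d-1}$ sum to $d$, introducing $d-2$ new rays at $A$ in the source, so that the total ramification data at $A$ admits a valid genus-$0$ Hurwitz cover.

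The crux of the local realizability check is at $A$: with the three original flags (ramifications $s_1, s_2, d$) plus the $d-1$ flags from the split critical path direction (ramifications $m_1, \ldots, m_{d-1}$), Riemann--Hurwitz gives $\sum(e_i - 1) = (s_1-1)+(s_2-1)+(d-1)+(d-1-\text{terms}) = 2d-2$, so the count is correct, and the corresponding Hurwitz number is nonzero by classical Hurwitz theory. In the simplest case $s_1 = s_2 = 1$ (so $d = 2$), the cover at $A$ is the classical degree-$2$ cover of $\mathbb P^1$ with four marked points of ramifications $(1,1,2,2)$. The analogous checks at $P$, $Q$, and at the subdivision vertices of the cycle amount to standard totally or simply ramified degree-$d$ covers, each of which is realizable.

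The main obstacle will be the bookkeeping required to coordinate these choices: the lengths of new target rays, the position of the turn-around vertex $Q$, the subdivision points along the cycle, and the slopes of the auxiliary rays in both source and target must all be chosen consistently so that the resulting $\hat F$ is harmonic, finite, and locally realizable at every vertex. The combinatorial feature that makes the construction possible is precisely that the flag number at $A$ equals $3$: this provides one additional marked ramification point at $A$ beyond what a simple (flag number $2$) critical vertex would offer, supplying exactly the freedom needed to balance Riemann--Hurwitz locally and to absorb the obstruction encountered in the preceding proposition.
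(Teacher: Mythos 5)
Your strategy is the same as the paper's: build an explicit $H$-modification by adding one new ray $\ell$ at $O$, sending the contracted critical path up $\ell$, folding the cycle over a segment of $\ell$ at a turn-around vertex, and adding auxiliary rays; and the crux you isolate is exactly the paper's ``only nontrivial check'': Riemann--Hurwitz at the critical vertex $A$ leaves room for exactly one extra simple ramification point, so the profile over the new direction must be $(2,1^{d-2})$, giving $(d-2)+(d-1)+1=2d-2$, and the corresponding genus $0$ Hurwitz number is nonzero (a $d$-cycle times a suitable transposition has cycle type $(s_1,s_2)$). That local computation, including the $d-2$ auxiliary rays at $A$, is correct and matches the paper.

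The problem is in the part you defer to ``bookkeeping''. You propose that the two arcs of the cycle make it a degree-$d$ cover of a segment of $\ell$, but in the construction you describe the only flag at the cycle vertex $P$ pointing back down $\ell$ is the critical path $AP$ itself (a new ray at $P$ cannot map down past $O$ without further surgery, since at its point over $O$ the map would fail to be of pure degree), so harmonicity at $P$ forces the two arc slopes to sum to the slope of $AP$, which is one of your $m_j\in\{1,2\}$; finiteness forces both arcs to have positive slope, so the slope of $AP$ must be exactly $2$. In other words, the unique part equal to $2$ in $(2,1^{d-2})$ must be carried by the critical path itself, not by an auxiliary ray, and the cycle is then a degree-$2$ (not degree-$d$) cover of its image segment, folding at the point antipodal to $P$, with one more modification there so the map continues up $\ell$ with total slope $2$; the remaining degree $d-2$ over $\ell$ is supplied by the $d-2$ rays attached at $A$. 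With these corrections your outline becomes the paper's proof, but as stated the ``degree-$d$ cover'' step fails for $d>2$, and the claim that the checks at $P$ and the turn-around vertex are routine is exactly the place where the construction has to be pinned down rather than assumed.
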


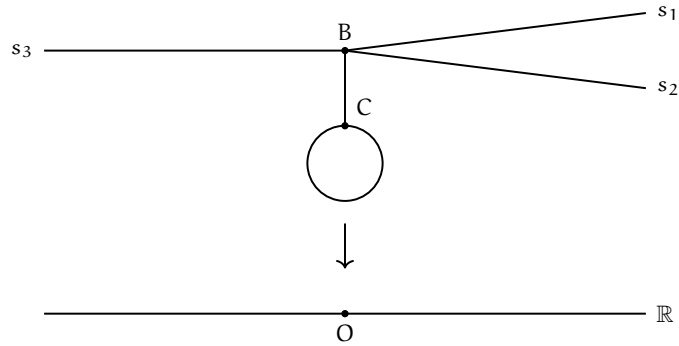
\begin{figure}[h!]
\begin{tikzpicture}
\draw (0,3) -- (0,2);
\draw (0,1.5) circle (0.5);

\filldraw[black] (0,3) circle (1pt) node[anchor=south,font=\footnotesize] {$B$};
\filldraw[black] (0,2) circle (1pt) node[anchor=south west,font=\footnotesize] {$C$};

\draw (0,3) -- (4,3.5) node[anchor=west,font=\scriptsize] {$s_1$};
\draw (0,3) -- (4,2.5) node[anchor=west,font=\scriptsize] {$s_2$};
\draw (0,3) -- (-4,3) node[anchor=east,font=\scriptsize] {$s_3$};

\draw[->] (0,0.7) -- (0,0.1);

\draw (-4,-0.5) -- (4,-0.5) node[anchor=west,font=\footnotesize] {$\mathbb{R}$};
\filldraw[black] (0,-0.5) circle (1pt) node[anchor=north,font=\footnotesize] {$O$};
\end{tikzpicture}
\caption{A tropical map with a contracted cycle and a single critical path with three non-constant flags at the critical point. In the figure, the cycle and the edge $BC$ are contracted. The map is linear of slope $s_i$ on the three rays, in the indicated directions.}\label{fig: g1_speyer_one_leg_threeflags}
\end{figure}

\begin{proof}
We describe the modification $\hat F$. First, modify $\RR$ by adding a ray at $O$. Map the segment $BC$ onto this new ray with slope $2$. Next, modify the circle by adding a ray at the antipodal point of $C$, and denote the new vertex by $C'$. Map both paths from $C$ to $C'$ onto the new ray with slope $1$, and then map the remainder of the new ray in the domain onto the remainder of the new ray in the target with slope $2$.

All required Hurwitz covers can be realized. The only nontrivial verification occurs at the vertex $B$ in the modification. A Riemann--Hurwitz computation shows that any cover realizing the prescribed local map at $B$ contains exactly one simple ramification point, as required by the slope-$2$ condition along $BC$.

A figure illustrating an analogous construction appears in the proof of Proposition~\ref{construction: genus 1 with one Y-leg_type A}.
\end{proof}

\begin{remark}[A purely geometric argument]
In the above situation, since the map is well-spaced regardless of the length of the critical path, one can describe the construction in more classical geometric language. For simplicity of exposition, consider the case where the circle in Figure~\ref{fig: g1_speyer_one_leg_threeflags} is shrunk to a single genus-$1$ vertex. Choose an elliptic curve $E$ with a marked point $p$ corresponding to the edge $BC$. Choose a $2\!:\!1$ map $E \to \PP^1$ ramified at $p$. The curve $C_B$ at $B$ and its map can be realized by a rational function on $\PP^1$, and a Riemann--Hurwitz computation shows that there is a unique additional simple ramification point $q$, beyond the marked points. Attach $E$ to this $\PP^1$ by identifying $p$ with $q$. The target is glued analogously, yielding the desired admissible cover.
\end{remark}

\subsubsection{Two paths from the cycle}

We next consider a slightly more complicated situation: a superabundant tropical curve that is well-spaced due to the relative positions of two critical paths.

\begin{proposition}\label{prop:g1_speyer_two_legs}
Consider the map $F\colon \Gamma\to\RR$ shown in Figure~\ref{fig: g1_speyer_two_legs}. If the lengths of the edges $e_1$ and $e_2$ are equal, then $F$ is $H$-modifiable.
\end{proposition}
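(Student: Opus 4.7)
The plan is to construct an explicit $H$-modification, following the pattern of the single-leg case in Proposition~\ref{lem: genus 1 with 4-valent-critical-point}. First, attach a new ray $\rho$ to the target at the image $O$ of the contracted subgraph. Map each critical path $e_i$ linearly into $\rho$ with slope equal to the local degree at the corresponding critical vertex $B_i$. The equal-length hypothesis enters here crucially: it forces $\hat F(C_1)$ and $\hat F(C_2)$ to coincide at a single point $N\in\rho$.

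Next, map each of the two cycle arcs into $\rho$ as a symmetric fold: from $N$, travel up $\rho$ with slope $1$ to the arc's midpoint, and symmetrically back down to $N$. At each midpoint, add a domain ray mapping up $\rho$ with slope $2$ to restore pure degree at that midpoint. At each $C_i$, restore pure degree by attaching a small domain tree: a bounded segment going back down $\rho$ to a new vertex above $O$, from which two rays emanate into the positive and negative tangent directions of $\RR$.

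The verification has two parts. Harmonicity at every vertex is a direct slope calculation. Local realizability reduces at each vertex to the existence of a cover $\PP^1\to\PP^1$ of low degree with prescribed ramification at marked points, possibly with additional simple branch points; such covers exist by Riemann--Hurwitz.

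The main obstacle is the coordination between the pure-degree condition and the local Hurwitz conditions at the vertices $C_1$ and $C_2$ where the cycle meets the critical paths. The equal-length hypothesis is not cosmetic but precisely what allows all local slopes to simultaneously satisfy both conditions with positive integer values. As the necessity of well-spacedness (Theorem~\ref{proposition: speyer contracted 1}) confirms, without this equality no valid $H$-modification can exist.
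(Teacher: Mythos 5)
Your construction breaks at the first step: you map each contracted edge $e_i$ into the new ray $\rho$ with slope equal to the local degree $s_i$ at the critical vertex, and this is exactly the wrong choice. First, with slopes $s_1$ and $s_2$ on edges of equal length $\ell$, the images of the two attaching points sit at distances $s_1\ell$ and $s_2\ell$ from $O$, so they do \emph{not} coincide unless $s_1=s_2$; the point $N$ on which the rest of your construction rests need not exist. Second, even when $s_1=s_2$, local realizability fails at the critical vertices as soon as $s_i\geq 2$: your slopes demand a degree $s_i$ cover $\PP^1\to\PP^1$ totally ramified over three distinct points, and Riemann--Hurwitz gives $2s_i-2\geq 3(s_i-1)$, i.e.\ $s_i\leq 1$. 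So at these vertices Riemann--Hurwitz is an \emph{obstruction}, not the existence mechanism you invoke. In fact the slope on $e_i$ is forced: if $e_i$ maps with slope $k$ and you add rays at the critical vertex to restore pure degree $s_i$, the ramification profiles $(s_i),(s_i),(k,1,\dots,1)$ satisfy Riemann--Hurwitz only when $k=1$. This is what the paper's construction does: map $e_1,e_2$ with slope $1$ (equal lengths then really do give a common image point $N$), add $s_i-1$ rays at each critical vertex mapping to the new target ray with slope $1$, and realize the local cover there by $z\mapsto z^{s_i}$.

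Your repair at the attachment points $C_i$ is also not well defined, and your routing of the arcs is what creates the need for it. With both arcs folded up $\rho$, the imbalance at $C_i$ is ``degree $s_i$ toward $O$ versus degree $2$ away from $O$,'' so the required fix depends on $s_i$; and the tree you attach -- a segment going down $\rho$ to a vertex strictly above $O$ whose rays map ``into the positive and negative tangent directions of $\RR$'' -- makes no sense, because an interior point of $\rho$ has no such tangent directions in the modified target. The paper avoids any repair here: it adds two new rays to the target at $N$ and sends each cycle arc onto its own ray (folding at the midpoint, with rays added there), so that at $B$ and $C$ the three incident edges map with slope $1$ into three distinct target directions, the local degree is $1$, and the local cover is the identity. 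Once the slope on $e_1,e_2$ is corrected to $1$ and the arcs are re-routed in this way, your midpoint device (a slope-$2$ ray at each fold) is unproblematic, but as written the proposal does not produce a finite, locally realizable harmonic modification.
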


\begin{figure}[h!]
\includegraphics{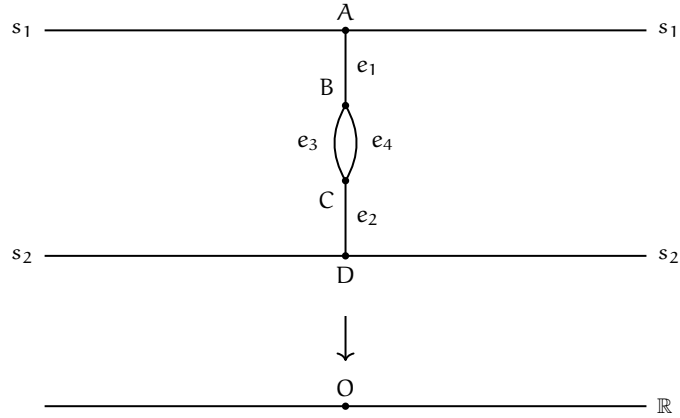}
\caption{A tropical map with a contracted cycle with two critical paths. To interpret the figure, note that the cycle, $AB$, and $CD$ are all contracted. The map is linear of slope $s_1$ resp. $s_2$ on the upper resp. lower rays.}\label{fig: g1_speyer_two_legs}
\end{figure}

\begin{proof}
If $e_1$ and $e_2$ have the same length, the construction in Figure~\ref{fig: basic-Speyer} produces an $H$-modification
\begin{figure}
\includegraphics[scale=0.75]{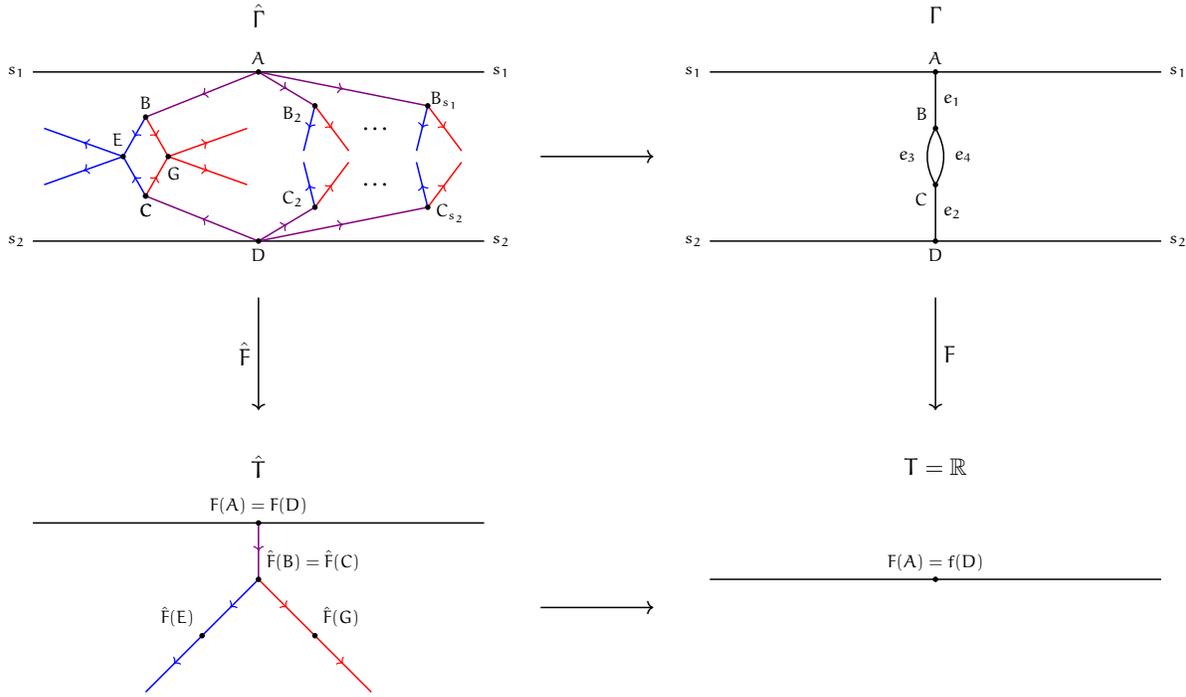}
\caption{The modification required for the proof of Proposition~\ref{prop:g1_speyer_two_legs}. The contracted parts of the original graph are shown in colour in $\Gamma'$, as are the required new rays in the modification. The ray in $\hat T$ that points southeast receives the image of $e_3$ in the original $\Gamma$, and the southwest receives $e_4$. The remainder of the modification accounts for the possibility that the two $s_i$ are larger than $1$.}\label{fig: basic-Speyer}\phantom{.} 
\end{figure}
The map $\hat{F}$ is chosen so the slope on the colored parts -- all edges that are either newly added or contracted by $F$ -- have slope $1$. The modification is harmonic and locally realizable: at $A$ resp. $D$ consider the maps $z\mapsto z^{s_1}$ resp. $z\mapsto z^{s_2}$. Use the identity map at all other vertices. 
\end{proof}

Next, we consider an example where there are two critical paths with a shared edge.

\begin{proposition}\label{construction: genus 1 with one Y-leg_type A}
Let $F\colon\Gamma\to\RR$ be the map in Figure~\ref{fig: g1_speyer_y_shaped_leg}. If the distances from $A_1$ and $A_2$ to $B$ are equal, then $F$ is $H$-modifiable.
\end{proposition}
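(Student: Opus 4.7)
The strategy is to construct an explicit $H$-modification, imitating the constructions from Proposition~\ref{prop:g1_speyer_two_legs} and Proposition~\ref{lem: genus 1 with 4-valent-critical-point}. The hypothesis $|A_1B| = |A_2B|$ plays the same role as the equal-leg hypothesis in Proposition~\ref{prop:g1_speyer_two_legs}: it is exactly what is needed so that the two branches of the Y can jointly cover the initial segment of a single new ray added to the target $\RR$.

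First I would modify the target by attaching a single new ray $\ell$ at $O = F(\text{cycle})$, and place a vertex $N$ on $\ell$ at distance $a = |A_iB|$ from $O$. On the source, I would map each segment $\overline{A_iB}$ linearly with slope $1$ onto $[O,N]$, so that $A_i \mapsto O$ and $B \mapsto N$, and attach $(s_i - 1)$ additional rays at each $A_i$ mapping with slope $1$ onto the portion of $\ell$ past $N$. This extends the slope-$s_i$ rays at the $A_i$ of the original $\Gamma$ to local degree $s_i$ covers at each $A_i$, and handles the critical half of the picture.

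Next I would handle the contracted segment $\overline{BP}$ (where $P$ is the cycle vertex at the base of the Y) together with the contracted cycle. Balancing at $B$ forces the edge $\overline{BP}$ to carry slope $2$, so $P$ must be mapped to $N + 2|BP|$ on $\ell$. I would then split the cycle at the antipode $P'$ of $P$ and map each of the two arcs with slope $1$ onto the portion of $\ell$ between $N + 2|BP|$ and $N + 2|BP| + L/2$, where $L$ is the cycle length, adding an auxiliary ray at $P'$ to restore balancing there, as in the cycle-unfolding step of Proposition~\ref{lem: genus 1 with 4-valent-critical-point}. Local realizability is then readily checked vertex by vertex: at $A_i$ use $z \mapsto z^{s_i}$; at $B$ use a degree-$2$ cover of $\PP^1$ branched at the two required points, which always exists; at $P$, $P'$, and the remaining vertices, use the rational-function covers produced by the genus $0$ constructions of Section~\ref{sec: genus 0}.

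The main obstacle, as in the single-leg three-flag case, is arranging the combinatorics on the cycle so that the slope $2$ arriving at $P$ propagates to globally harmonic data with balanced vertices at $P'$ and at any intermediate subdivision vertices; once the auxiliary ray at $P'$ is inserted with the right slope, everything reduces to Hurwitz existence problems on $\PP^1$ that admit solutions by Riemann–Hurwitz and the fact that every degree-$0$ principal divisor on $\PP^1$ is realized by a rational function, as exploited in Lemma~\ref{lemma:non-contracted_and_balanced_implies_realizable}.
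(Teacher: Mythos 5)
Your construction is correct and essentially the paper's own proof: attach a single ray at $O$, map the two legs $\overline{A_iB}$ with slope $1$ and the stem with slope $2$, unfold the contracted cycle at the antipode of its attaching point with an auxiliary ray restoring balance, and verify local realizability via $z\mapsto z^{s_i}$ at the $A_i$ and degree-$2$ covers along the rest. The only (immaterial) differences are that the paper additionally adds rays at $B$ and at the cycle point, together with matching rays on the target, so that the residual simple branch points of the degree-$2$ covers are marked — and note that the $(s_i-1)$ extra rays you attach at $A_i$ necessarily map with slope $1$ onto all of $\ell$ starting from $O$, not merely the portion past $N$.
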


\begin{figure}[h!]
\includegraphics{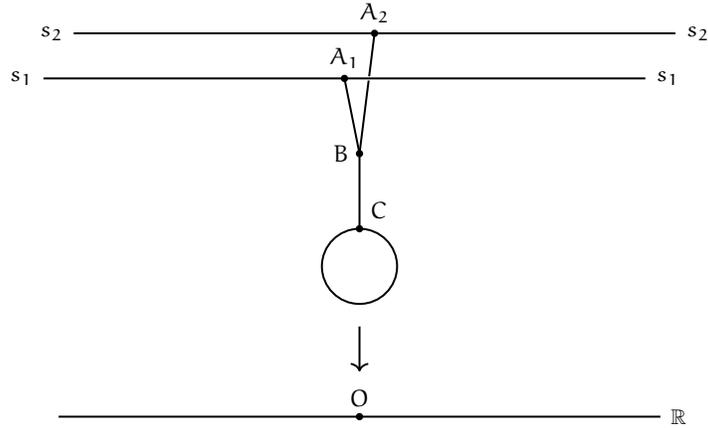}
\caption{A tropical map with a contracted cycle with two critical paths forming a ``Y'' to the cycle. To interpret the figure the figure, note that the cycle, $BC$, $A_1B$, and $A_2B$ are all contracted.}\label{fig: g1_speyer_y_shaped_leg}
\end{figure}

\begin{proof}
The proof is similar to that of Proposition~\ref{lem: genus 1 with 4-valent-critical-point}. If $A_1$ and $A_2$ have the same distance from $B$, then modify $\mathbb{R}$ by adding a ray to $O$, and another two rays at the points (on the added ray) with distance $|A_1 B|$ and $|A_1 B|+2|BC|$ from $O$ respectively. Make the same modifications at $A_1,A_2\in\Gamma$ $s_1-1$ and $s_2-1$ times respectively, and naturally map these onto $\hat{T}$ with slope $1$. Note that in the figure below, we suppress these extra rays. Next, modify $\Gamma$ by adding one ray at $B$, one ray at $C$, and two rays at the midpoint of the loop at $C$. Then let $\hat{F}:\hat{\Gamma}\to\hat{T}$ be as follows in Figure~\ref{fig: Speyer-Y-shaped} below. Then $\hat{F}$ is an $H$-modification of $F\colon\Gamma\to\RR$.

\begin{figure}[h!]
\includegraphics[scale=0.75]{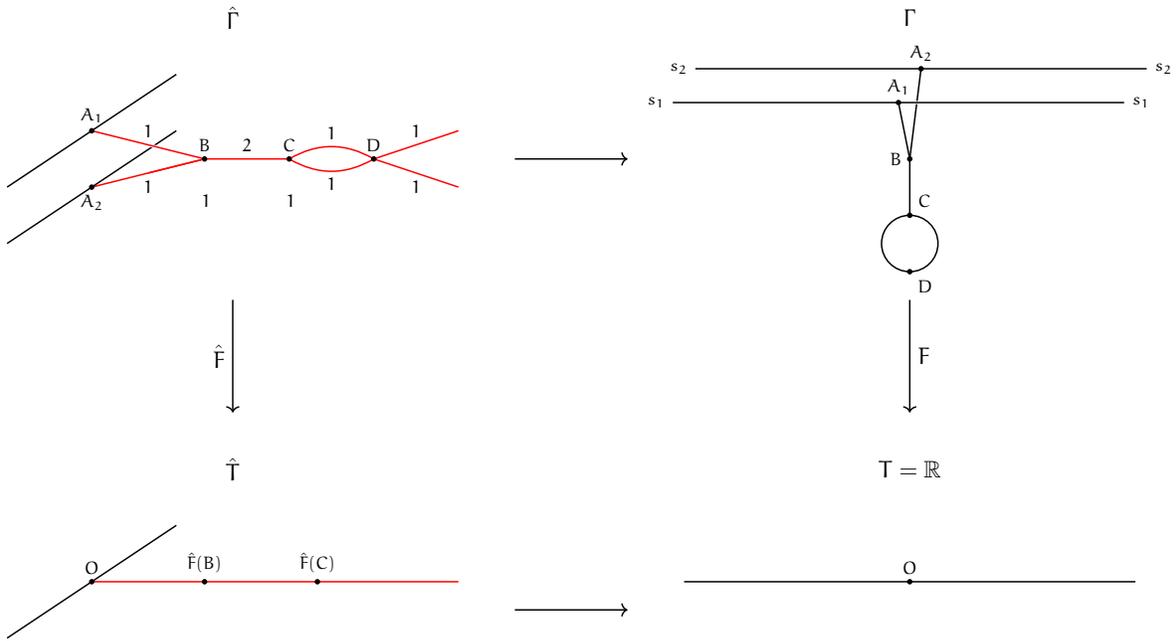}
\caption{The modification construction required for Proposition~\ref{construction: genus 1 with one Y-leg_type A}. The subgraph that was formerly the edges $A_1B$, $A_2B$, $BC$, and the cycle are now in red, as are the new rays in the modification. The numbers on $\hat\Gamma$ are slopes of the edges as they map onto the indicated edges. The interior edges in the modified part of the graph is trivalent, and since the map to the modified ray on the target is balanced, the Hurwitz problem is solvable on all these edges.}\label{fig: Speyer-Y-shaped} 
\end{figure}
\end{proof}

\subsection{The full realizability criterion} With the cases above done by hand, we now explain how to inductively deduce the general case. 

\begin{theorem}[Sufficiency]\label{proposition: speyer contracted 2}
Let $F\colon \Gamma\to\RR$ be a balanced function on a genus-$1$ tropical curve. Assume that an open neighborhood of the cycle is contracted. If $F$ is well-spaced, then it admits an $H$-modification and is therefore realizable. 
\end{theorem}

We begin with a basic reduction.

\begin{lemma}\label{lem: generic-lengths}
The general case of Theorem~\ref{proposition: speyer contracted 2} follows from the special case in which:
\begin{enumerate}[(i)]
\item There are at most two critical paths of minimal length.
\item The graph $\Gamma$ is trivalent if there are two minimal critical paths, and otherwise is trivalent away from a single $4$-valent vertex located on the (unique) minimal critical path.
\item The critical paths of non-minimal length have pairwise distinct lengths.
\end{enumerate}
\end{lemma}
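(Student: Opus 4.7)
The plan is to apply Theorem~\ref{lemma:limit} as a reduction tool. Specifically, for any well-spaced $(\Gamma, F)$ satisfying the hypotheses of Theorem~\ref{proposition: speyer contracted 2}, I will construct a continuous family $(\Gamma_t, F_t)_{t \in [0,1)}$ with each $(\Gamma_t, F_t)$ well-spaced and satisfying conditions (i)--(iii), whose limit at $t = 1$ is the given $(\Gamma, F)$. Granting the special case of Theorem~\ref{proposition: speyer contracted 2}, each $(\Gamma_t, F_t)$ for $t < 1$ is then realizable, and realizability of the limit follows from Theorem~\ref{lemma:limit}.

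The family is built by three simultaneous perturbations. First, a generic small perturbation of the lengths of all non-minimal critical paths makes them pairwise distinct while remaining strictly above the minimum; this secures (iii) without affecting well-spacedness. Second, to secure (i), I use that well-spacedness gives $\sum_V n(V) \geq 3$ over critical vertices $V$ on minimal critical paths. Either some such $V$ has $n(V) \geq 3$, which I designate as the unique minimal critical vertex (Case B), or all have $n(V) = 2$ and at least two exist, of which I designate two (Case A). I then lengthen all other originally-minimal critical paths by a small positive amount so that only the designated paths achieve the minimum length.

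Third, to secure (ii), I resolve every non-trivalent vertex into trivalent pieces, with the sole exception of the designated $4$-valent critical vertex in Case B. A vertex of valence $k > 3$ is replaced by two vertices connected by an edge of length $\epsilon > 0$: its flags are partitioned between the two new vertices and the slope of the new connecting edge is fixed by the balancing condition. Iterating reduces to trivalency. For a critical vertex $V$ with $n(V) \geq 4$, I first split off $n(V) - 3$ of its non-contracted flags onto a non-critical auxiliary vertex, leaving a $4$-valent critical vertex with $n = 3$.

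The main obstacle is to arrange the trivalent resolution so that it does not create new minimal critical paths and so preserve well-spacedness. The subtle point is that when splitting a critical vertex, the new connecting edge must be non-contracted, so that the ``orphan'' vertex is not itself critical; otherwise one introduces a new critical path of length $L + O(\epsilon)$ near the minimum. Equivalently, the partition of non-contracted slopes on the two sides must have nonzero sum on each side, which is always achievable: among nonzero integers $s_1, \ldots, s_k$ summing to zero with $k \geq 3$, some nontrivial subset has nonzero sum, by a short combinatorial check. Letting $\epsilon$ and the length perturbations tend to zero collapses the family to $(\Gamma, F)$, and Theorem~\ref{lemma:limit} concludes the argument.
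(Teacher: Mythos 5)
Your proposal follows essentially the same route as the paper: the paper's entire proof is a one-line appeal to the closedness of realizability (Theorem~\ref{lemma:limit}) applied to a deformation of $[F]$ to generic position, which is exactly your strategy of exhibiting $(\Gamma,F)$ as the $t\to 1$ limit of a family satisfying (i)--(iii); your write-up usefully makes explicit the trivalent-resolution step (deforming into a larger cone by inserting short edges) that the paper's phrase ``the moduli cone of maps that contains $[F]$'' glosses over, since length perturbations alone cannot change valences, and you correctly identify the one genuine subtlety, namely that the inserted edges at critical vertices must be non-contracted so as not to create new near-minimal critical paths. One small repair is needed in your Case B: splitting off $n(V)-3$ non-contracted flags along a non-contracted connecting edge leaves $V$ with $3+1=4$ non-contracted flags (and valence $5$ if there is one contracted flag), while making that connecting edge contracted would make the auxiliary vertex critical; you should instead split off $n(V)-2$ non-contracted flags whose slopes have nonzero sum (such a subset always exists by the same elementary argument you invoke), which leaves the designated critical vertex $4$-valent with flag number $3$ as required.
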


\begin{proof}
This follows immediately from the closedness of the realizable locus in the moduli cone of maps containing $[F]$, as stated in Theorem~\ref{lemma:limit}. 
\end{proof}

In what follows, we assume the conditions of the lemma hold. We denote the minimal critical path by $A_1B_1$, where $B_1$ lies on the cycle and $A_1$ is the critical point.

The structure of our proof of the theorem above is to induct on the number of non-minimal critical trees, with the base cases being the ones above. In the induction, a few finer properties of the modification are relevant. Before proceeding further, we record a combinatorial trick that arises frequently in practice, and is relevant for the proofs and definitions that follow. 

\begin{remark}[Long images and slope adjustments]\label{rem: slope-adjustment}
Consider $F\colon \Gamma\to\RR$ as in Figure~\ref{fig: g1_speyer_y_shaped_leg}. If we add a new critical path starting at a new point of the cycle, we have the following: 

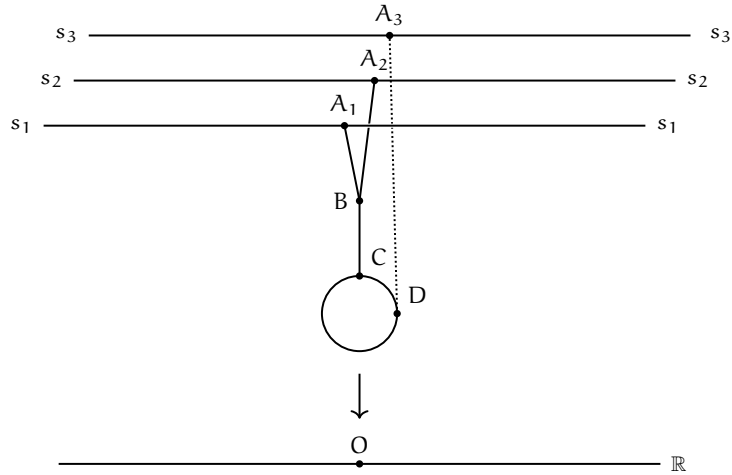
\begin{figure}[h!]
    \begin{tikzpicture}
        \draw[densely dotted] (0.5,1.5)--(0.4,5.2);        
        \draw (0,3) -- (0,2);
        \draw (0,1.5) circle (0.5);
        \draw (0,3) -- (-0.2,4);
        \draw (0,3) -- (0.1,4.6);
        \draw (0.4,5.2)  --(4.4,5.2);
        \draw (0.4,5.2)--(-3.6,5.2);
        \draw node[anchor=east,font=\scriptsize] at (5.1,5.2) {$s_3$};
        \draw node[anchor=east,font=\scriptsize] at (-3.6,5.2) {$s_3$};
        
        \draw (-3.9,4.6) node[anchor=east,font=\scriptsize] {$s_2$}  -- (4.1,4.6) node[anchor=west,font=\scriptsize] {$s_2$};
        \draw[white,ultra thick] (-4.2,4) -- (3.8,4);
        \draw (-4.2,4) node[anchor=east,font=\scriptsize] {$s_1$}  -- (3.8,4) node[anchor=west,font=\scriptsize] {$s_1$};
        
        \filldraw[black] (-0.2,4) circle (1pt) node[anchor=south ,font=\footnotesize] {$A_1$};
        \filldraw[black] (0.1,4.6) circle (1pt) node[anchor=south ,font=\footnotesize] {$A_2$};
        \filldraw[black] (0.4,5.2) circle (1pt) node[anchor=south ,font=\footnotesize] {$A_3$};
        \filldraw[black] (0,3) circle (1pt) node[anchor=east,font=\footnotesize] {$B$};
        \filldraw[black] (0,2) circle (1pt) node[anchor=south west,font=\footnotesize] {$C$};
        \filldraw[black] (0.5,1.5) circle (1pt) node[anchor=south west,font=\footnotesize] {$D$};
        
        \draw[->] (0,0.7) -- (0,0.1);
        
        \draw (-4,-0.5) -- (4,-0.5) node[anchor=west,font=\scriptsize] {$\mathbb{R}$};
        \filldraw[black] (0,-0.5) circle (1pt) node[anchor=south,font=\footnotesize] {$O$};
    \end{tikzpicture}
    \caption{A new critical path $B$ attached to the cycle. The cycle and the segments $BC$, $A_iB$, and $A_3D$ are all contracted.}\label{fig: g1_speyer_y_shaped_leg_appended}
\end{figure}

Before this new attachment, the standard $H$-modification adds a ray based at $O$ in the target; it then maps $A_1B$ and $A_2B$ onto this ray with slope $1$. Balancing forces the outgoing slope at $B$ to be $2$, and we map $BC$ with slope $2$, the two edges of the loop with slope $1$, and then modify by adding two rays at a point of the loop. 

We now contemplate adding $A_3D$. It can happen that $|A_3D|$ is bigger than $|BC|+|A_1B|$, but smaller than $|A_1B|+2|BC|$. Now $A_3$ and $A_1$ map to the same point on the target, and since the slope of the modification $A_3D$ may be forced to be $1$, this can make it impossible to extend the standard modification of Figure~\ref{fig: g1_speyer_y_shaped_leg} to accommodate the new critical tree. 

\begin{figure}
    \centering
    \includegraphics[scale=0.75]{Figures/g1_speyer_y_shaped_leg_appended_modification.tex}
    \caption{An illustration of the slope adjustment trick. The edge $BC$ is divided at $E$; the segment $BE$ is mapped with slope $2$ while $EC$ is mapped with slope $1$. The point $E$ is chosen to ensure that the length of the image of the path from $A_1$ to $C$ matches $|A_3D|$. The tree attached at $E$ ensures harmonicity where the slope changes.}
    \label{fig:g1_speyer_y_shaped_leg_appended_modification}
\end{figure}
The solution is as follows. Choose a point $E$ on $BC$ such that $|A_1B|+2|BE|+|EC|=|A_3 D|$
length of $A_1B$, plus twice the length of $BE$, plus the length of $EC$ is equal to the length of $A_3D$
. Map $BE$ with slope $2$ as before, but map $EC$ with slope $1$. This creates an issue with harmonicity at $E$, but we will come back to fix this later. The image of $C$ is a point of the new ray added at $O$ in the target. Add another infinite ray at this image, so there are now two rays based at the image of $C$; call them $r$ and $r'$. Now, map the clockwise arc onto $r$ by dividing the arc at its midpoint, map the first half onto $r$, and map the second half back towards the image of $C$. The midpoint of this arc is a turning point. By adding further rays at this turning point, we can ensure harmonicity. Do the same for the counter-clockwise arc, mapping it to $r'$. 

Finally, to fix the issue at the point $E$, we attach a tree to $E$. Precisely, if we delete the image of $E$ from the target, there are two connected components. Let $T$ be the (closure of the) component containing the image of the cycle. Attach a copy of $T$ to the domain based at $E$ and map it down isomorphically (with slope $1$). This produces an $H$-modification.
\end{remark}

The remark above shows that the lengths of images of paths in modifications plays a role in the analysis, and also records the ``slope adjustment trick'', which we will use later. 

We are now ready to proceed with the rest of the proof. 

\begin{definition}
Let $F\colon \Gamma\to\RR$ be as above. An $H$-modification $\hat F$ is called {\it cycle decomposing} if it satisfies the following three conditions:
\begin{enumerate}[(i)]
\item For any minimal critical path $w$ in $\Gamma$ the image $\hat F(w)$ is still a path, and for any critical path $u$, the length of $\hat F(u)$ is larger than or equal to that of $\hat F(w)$. 
\item There exists $r>0$ such that, if we let $T_r = \hat F(A_1B_1)\cup \mathbb B_r(\hat F(B_1))$, where the latter is the ball of radius $r$ around $\hat F(B_1)$,  then 
\[
\Gamma\setminus \hat F^{-1}(T_r)
\]
is a disjoint union of trees, and the leaf vertices of each tree have the same image under $\hat F$. 
\item Any $F$-contracted edge that does not lie on the cycle nor on a minimal critical path has slope $1$ under $\hat F$.
\end{enumerate}
\end{definition}

The examples/base cases discussed above are each cycle decomposing. 

In the process of adding non-minimal critical paths, it is more straightforward to do so if the path is ``very long'' -- we make this precise below. Recall that $A_1B_1$ is always a minimal critical path in our notation. 

\begin{definition}
Let $F\colon \Gamma\to\RR$ be as above, and $\hat F$ a cycle decomposing $H$-modification. Add a new critical path $w$ with critical point $A$, and meeting $\Gamma$ at $B$. We say that $w$ is {\it very long} if the following conditions hold. 
\begin{enumerate}
\item If $B$ is in a critical path with critical point $A_2$, then the length of $\hat F(A_2B)$ is shorter than $AB$.
\item If $B$ is a cycle point, then $\hat F(A_1B_1)$ is shorter than $AB$. 
\end{enumerate}
\end{definition}

The next lemma explains how to attach ``very long'' critical paths, while preserving the cycle decomposing nature of the modification. 

\begin{lemma}\label{lem: technical-1}
Let $\widehat{F}: \widehat{\Gamma} \to \Delta$ be a cycle decomposing $H$-modification of $F$. Attach a very long critical path $w$. The new graph admits a cycle decomposing $H$-modification. Further, the image of any edge in a minimal critical tree is unchanged as compared with $\hat F$, under the natural identification of a metric graph as a subspace of a modification. 
\end{lemma}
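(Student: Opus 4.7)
The plan is to enlarge $\hat F$ by appending structure that carries the new critical path $w = \overline{AB}$, invoking the slope-adjustment trick of Remark~\ref{rem: slope-adjustment} to reconcile the length of $w$ with the available image lengths in $\hat F$. The very-long hypothesis supplies precisely the slack needed.

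First I would handle the case where $B$ lies on an existing critical path with critical vertex $A_2$; the case $B$ on the cycle is analogous with $\overline{A_1 B_1}$ in place of $\overline{A_2 B}$. Under $\hat F$ the path $\overline{A_2 B}$ has image of length strictly less than $|AB|$, so mapping $\overline{AB}$ directly with slope $1$ onto the same ray in $\hat\Sigma$ would overshoot $\hat F(A_2)$. To fix this, I pick a point $E$ on a contracted portion of $\Gamma$ (on the cycle, or on the existing critical path between $B$ and the cycle) and apply the slope-adjustment construction of Remark~\ref{rem: slope-adjustment}: change the slope of $\hat F$ on one edge through $E$ so that the image folds back at $E$ through a turning point, accumulating exactly the missing length $|AB| - |\hat F(\overline{A_2 B})|$; attach an auxiliary tree in the source at $E$ that maps isomorphically onto a portion of the new target to restore harmonicity at $E$. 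Assign slope $1$ to every newly added contracted edge, and set $\hat F(A) = \hat F(A_2)$ so that $A$ becomes a new critical vertex on the same ray in the modified target.

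Next I would verify the cycle-decomposing conditions for the new map. Condition (i) is immediate: by construction $|\hat F(\overline{AB})| = |AB| > |\hat F(\overline{A_2 B})|$, so the original minimal critical paths remain minimal, with unchanged images. Condition (ii) follows because the preimage of $T_r$ now contains one extra tree (corresponding to $w$ and the auxiliary tree added at $E$), whose image in $\hat\Sigma$ coincides with the images of the existing trees by construction. Condition (iii) holds because every newly added contracted edge is assigned slope $1$. Local realizability at each new vertex is then routine: at interior vertices of the auxiliary tree I use the identity cover, at the turning-point vertex I use $z \mapsto z^{k}$ for the appropriate $k$ (with extra rays added as needed), and at $A$ I use $z \mapsto z^{s}$ where $s$ is the common slope of the non-contracted flags at $A$.

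Finally, the assertion that the image of every edge in a minimal critical tree is unchanged is automatic: the only edges whose images are modified lie in $\hat F^{-1}(T_r)$, which by condition (ii) of the cycle-decomposing hypothesis is disjoint from the preimages of the minimal critical trees in $\hat\Gamma$. The main technical difficulty is the choice of $E$: it must be selected so that the slope-adjustment detour fits inside $T_r$ (so as not to alter any minimal critical tree) while accumulating the correct missing length. This feasibility is guaranteed precisely by conditions (ii) and (iii) of the cycle-decomposing property, which isolate a safe region near the cycle on which detours can be inserted without disrupting the rest of the structure.
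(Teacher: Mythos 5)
Your construction rests on the slope--adjustment trick of Remark~\ref{rem: slope-adjustment}, but that trick solves the opposite problem and cannot work here. Slope adjustment replaces slope $2$ by slope $1$ on part of an existing contracted edge, which \emph{shortens} the relevant distance in the target; it is designed for new critical paths that are \emph{not} very long, i.e.\ too short to reach the existing image at slope $1$. In the present lemma the new path $w=\overline{AB}$ is \emph{longer} than the available image, and since all slopes are at least $1$ no adjustment of the existing map can create the missing length. Worse, you place the fold/adjustment point $E$ on the existing contracted part of $\Gamma$ (the cycle or the critical path below $B$), whose image under $\hat F$ has bounded total length, while the very-long hypothesis puts no upper bound on $|AB|$; so for sufficiently long $w$ there is simply no point $E$ at which your detour can accumulate the deficit $|AB|-|\hat F(\overline{A_2B})|$ and still return the endpoint of $w$ to $\hat F(B)$, which is forced because $B$ is a point of the old graph. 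The paper's proof avoids this entirely: in Case I it maps the initial segment $AC$ (with $|AC|=|\hat F(\overline{A_2B})|$) onto the existing image and then adds a \emph{new infinite ray} at $\hat F(B)$, folding the terminal segment $CB$ at its midpoint $M$ onto that ray (slope $+1$ then $-1$), so arbitrary excess length is absorbed; in Case II (attachment on the cycle), which you dismiss as ``analogous,'' the excess is instead routed into one of the trees of $\Gamma\setminus\hat F^{-1}(T_r)$ provided by the cycle-decomposing hypothesis, using Proposition~\ref{theorem:trees} and the no-turning-point analysis to get slope $1$ on $DB$ -- a genuinely different argument that your proposal does not supply.

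Your verification of the conclusions is also flawed. You argue that the images of edges in minimal critical trees are unchanged because ``the only edges whose images are modified lie in $\hat F^{-1}(T_r)$, which \ldots is disjoint from the preimages of the minimal critical trees''; but by definition $T_r\supset \hat F(\overline{A_1B_1})$, so $\hat F^{-1}(T_r)$ \emph{contains} the minimal critical path -- the containment you need goes the other way, and in the paper the claim holds simply because the construction never alters the map on the old graph, whereas yours explicitly changes $\hat F$ on existing edges. Similarly, condition (ii) of the cycle-decomposing property for the new map is asserted ``by construction,'' but nothing in your construction guarantees that the new tree created by $w$ (and your auxiliary tree at $E$) has the \emph{same image} as the existing trees, and changing slopes on the path below $B$ or on the cycle can destroy the tree decomposition you are appealing to. So the key ideas of the paper's proof -- fold onto a freshly added ray at $\hat F(B)$ for attachments to critical paths, and absorb the excess into the tree decomposition for attachments to the cycle -- are missing, and the argument as written does not go through.
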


\begin{proof}
Let $A$ and $B$ be the endpoints of the new critical path, with $A$ being the critical point. We break the proof into two cases based on whether $B$ is located on an existing critical path (Case I) or on the cycle (Case II).

\noindent
\underline{\sc Case I.}
Suppose $w$ is attached to a critical path connecting a critical point $A_2$ to a vertex $B_2$ on the cycle. By the trivalency assumption, $B$ and $B_2$ are distinct. Assume that $|AB| > |\widehat{F}(A_2B)|$. Then there exists a unique point $C$ in the interior of $AB$ such that $|AC| = |\widehat{F}(A_2B)|$. We proceed with the following modification.

Let $T \subset \Delta$ be the tree attached to $\mathbb{R}$ at $0$ as part of the modification. We construct a new map on $\widehat{\Gamma} \cup w$ as follows. Glue $T$ to $\widehat{\Gamma} \cup w$ by identifying $\widehat{F}(A_2B)$ with $AC$. Define the map
\[
\widehat{\Gamma} \cup w \to \Delta
\]
by mapping $\widehat{\Gamma}$ via $\widehat{F}$, mapping $T$ identically with slope $1$, and contracting the segment $BC$ to the single point $\widehat{F}(B)$. This map is harmonic.

We complete the $H$-modification by applying Lemma~\ref{lemma:glueing}, together with Proposition~\ref{theorem:trees}, to handle the contracted segment $BC$. By construction, the length of the image of $AB$ is at least the length of $A_2 B_2$. Consequently, the resulting $H$-modification is cycle-decomposing. This concludes the proof for \textsc{Case I}.

\noindent
\underline{\sc Case II.} Suppose $w$ is attached to a cycle vertex $B$ and does not meet a critical path. By assumption, $\hat F(A_1B_1)$ is shorter than $AB$. Thus, there exists a point $C$ in the interior of $AB$ such that $AC$ has the same length as $\hat F(A_1B_1)$.

Since the modification is cycle-decomposing, we can choose a radius $r$ such that, if $T_r$ is the union of $\hat F(A_1B_1)$ with the radius-$r$ ball around $\hat F(B_1)$, then $\Gamma \setminus \hat F^{-1}(T_r)$ is a disjoint union of trees $S_1, \dots, S_m$. The endpoints of these trees have the same images under $\hat F$. After possibly relabeling, assume that $w$ is attached to the tree $S_1$, whose endpoints map to $V_1 \in \Delta$.

With this radius $r$ fixed and $S_1$ identified, we proceed as follows. Let $D$ be the point on $BC$ at distance $r$ from $C$, and let $T \subset \Delta$ be the tree attached to $\mathbb{R}$ at $0$ from the modification. We construct an intermediate map $\Gamma_1 \to \Delta$ as follows:

\begin{enumerate}
    \item \textbf{Source.} Construct the domain $\Gamma_1$ starting with $\widehat{\Gamma} \cup w$. The closure $\overline{T_r}$ consists of $\hat F(A_1B_1)$ together with edges $e_1,\dots,e_l$ of length $r$ attached to $\hat F(B_1)$. Assume $V_1 \in e_1$. Glue $T$ to $w$ by identifying $\hat F(A_1B_1)$ with $AC$ and $e_1$ with $CD$.
    
    \item \textbf{The map.} Define a map $\Gamma_1 \to \Delta$ that agrees with $\hat F$ on $\overline{\Gamma \setminus S_1}$ (including all modifications), maps the attached copy of $T$ identically with slope $1$, and contracts the entire $S_1$ (with its modifications) to the single point $V_1$. This map is well-defined and harmonic.
\end{enumerate}

Finally, applying Lemma~\ref{lemma:glueing} and Proposition~\ref{theorem:trees} to the contracted region produces the desired $H$-modification. By construction, the image of $AB$ is no shorter than $A_1B_1$, and the choice of $r$ ensures that the modification is cycle-decomposing. This completes the proof of \textsc{Case II}.
\end{proof}

Given a balanced function $F\colon \Gamma\to \RR$ that contracts the cycle, if we delete the cycle, there is a forest of contracted edges that connects the cycle to the critical points. We refer to the connected components as {\it critical trees}. It is possible to have a well-spaced map $F$ with a unique critical tree. There are two essential ways in which this can happen: first, when there are two critical points $A_1$ and $A_2$ whose associated critical paths share an edge. We call this a ``$Y$-shape'', as the critical points are the top endpoints of the $Y$ and the bottom of the $Y$ attaches to the cycle. The other case we isolate is the ``$3$-flag'' case -- there is a single critical point, but there are three flags based at this point where the map is non-constant. 

The next lemma enables us to take the $Y$-shaped picture and connect it to a tree via a contracted edge that attaches to the this unique critical tree. 

\begin{lemma}\label{lem:Y_shape}
Let $F\colon \Gamma\to\RR$ be balanced and well-spaced with a unique $Y$-shaped critical tree. Let $A_1$ and $A_2$ be the critical points on the $Y$, let $C$ be its trivalent center, and let $B_1$ be the cycle point. There exists an $H$-modification $\hat F$ that is cycle decomposing, and satisfies the following conditions: if $F$ has at least $2$ critical points, there exists a critical point $A_3$ and a point $D$ in $CB_1$, such that
\begin{enumerate}[(i)]
\item there is no turning point in the interior of $A_3D$, and hence $\hat F(A_1D)$, $\hat F(A_3D)$, and $A_3D$ all have the same length, 
\item the map $\hat F$ has slope $2$ on each edge of $DB_1$, and 
\item for any critical point $A_i \notin \{A_1,A_2,A_3\}$ connected by an $F$-contracted path to $E_i$ in the interior of $CB_1$, the length of $A_iE_i$ is larger than $\hat F(A_1E_i)$. 
\end{enumerate}
\end{lemma}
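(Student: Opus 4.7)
The plan is to start from the explicit cycle-decomposing $H$-modification of the $Y$-shape built in Proposition~\ref{construction: genus 1 with one Y-leg_type A} and to adapt it so that, when extra critical points are present, the slope regime along the stem $CB_1$ bifurcates at a carefully chosen point $D$, enabling the attachment of an additional minimal-length critical path with slope $1$ at $D$.

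For the base case with only two critical points, I would apply Proposition~\ref{construction: genus 1 with one Y-leg_type A} directly: the output is cycle decomposing by inspection, and conditions (i)--(iii) are vacuous. In the general case, I would look among critical points distinct from $A_1, A_2$ whose critical paths have minimal length and whose contracted attachment meets the interior of $CB_1$; choose $A_3$ so that its attachment point $E$ is closest to $C$ and set $D := E$. Minimality of the $A_3$ critical path forces $|A_3D| = |A_1C| + |CD|$, which is the exact length identity needed for condition (i). If no such minimal attachment to the interior of $CB_1$ exists, I would set $D := B_1$ and observe that the substantive content of (i)--(iii) becomes vacuous.

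Next, I would construct $\hat F$ by prescribing slopes: slope $1$ on each of $A_1C$, $A_2C$, $CD$, $A_3D$, and slope $2$ on $DB_1$. Local balancing at $D$ reads $-1+2-1=0$, and the Hurwitz datum at $D$ is realized by the degree-$2$ branched cover of $\PP^1$ sending two simple marked points to a doubly ramified one. The cycle is then modified around $\hat F(B_1)$ exactly as in Proposition~\ref{construction: genus 1 with one Y-leg_type A}. Conditions (i) and (ii) follow directly from the construction, and the cycle-decomposing property is immediate because the modified ray and the new slope-$1$ segment $A_3D$ both fold into the same linear extension of the target.

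For (iii), if $A_i$ attaches at $E_i \in CD$, then $\hat F(A_1 E_i) = |A_1C| + |CE_i|$, and the non-minimality of the $A_i$-path (forced by the closest-to-$C$ rule used to pick $A_3$) yields $|A_iE_i| > |A_1C| + |CE_i|$ from well-spacedness. The main obstacle is the case $E_i \in DB_1$ on the slope-$2$ segment: well-spacedness supplies $|A_iE_i| > |A_1C| + |CD| + |DE_i|$, but the stated inequality $|A_iE_i| > |A_1C| + |CD| + 2|DE_i|$ is strictly stronger. I expect to resolve this by applying the slope-adjustment trick of Remark~\ref{rem: slope-adjustment} locally at each such $E_i$: splitting $DE_i$ into a slope-$2$ portion and a short slope-$1$ portion, with a tree attached at the split point to restore harmonicity, brings the effective image length into agreement with the well-spacedness bound. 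An alternative is a subtler choice of $D$ --- taking the last rather than the first minimal attachment along $CB_1$ --- to ensure no $E_i$ falls beyond $D$. Arbitrating between these strategies while preserving cycle decomposability and compatibility with the later extension in Lemma~\ref{lem: technical-1} is where the combinatorial bookkeeping becomes delicate.
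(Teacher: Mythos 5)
There is a genuine gap, and it sits at the heart of your construction. You claim that ``minimality of the $A_3$ critical path forces $|A_3D| = |A_1C| + |CD|$''. This is false: since the $Y$ through $A_1,A_2$ is the \emph{unique} minimal critical tree (and, after the genericity reduction of Lemma~\ref{lem: generic-lengths}, all other critical paths are strictly longer), a critical point $A_3$ attached at $D$ in the interior of $CB_1$ satisfies $|A_3D| + |DB_1| > |A_1C| + |CD| + |DB_1|$, i.e.\ the strict inequality $|A_3D| > |A_1C| + |CD|$; equality would make the $A_3$-path a second minimal critical tree. Once the equality fails, your slope prescription (slope $1$ on $A_1C$, $CD$ and $A_3D$, slope $2$ on $DB_1$) is inconsistent: $A_3$ is a critical point, so $\hat F(A_3)$ lies on the original $\RR$ at $O$, and a slope-$1$, turning-point-free image of $A_3D$ has length $|A_3D|$, which cannot terminate at $\hat F(D)$, a point at distance only $|A_1C|+|CD|$ from $O$. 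This is exactly why condition (i) of the lemma is a nontrivial requirement: the image $\hat F(A_1D)$ must be \emph{stretched} to length $|A_3D|$ by running part of the stem with slope $2$ and switching back to slope $1$ at a modified point (the ``there and back''/slope-adjustment device of Remark~\ref{rem: slope-adjustment}). You invoke that trick only later, as a patch for condition (iii) at the points $E_i$, which does not repair the fundamental mismatch along $A_3D$ versus $A_1D$.

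Structurally, your one-shot choice of $A_3$ (``minimal attachment closest to $C$'', or alternatively the last one) also diverges from what makes the paper's argument work: the paper inducts on the number of critical paths, attaching them in increasing order of length, with conditions (i)--(iii) serving precisely as the inductive bookkeeping that forces every later attachment off the stem to be ``very long'' (handled by Lemma~\ref{lem: technical-1}), while an attachment to the slope-$2$ stem that is not very long is absorbed by choosing a switch point $E$ with $|AB| = |A_3D| + 2|DE| + |EB|$, temporarily detaching the attachments beyond $E$ (which become very long after the adjustment), and regluing. Without this ordering and bookkeeping, your construction does not control the other attachment points, and your fallback claim that (i)--(iii) are ``vacuous'' when no critical point meets the interior of $CB_1$ is also not right: when there are more than two critical points the lemma still demands some $A_3$ with the matching image lengths of (i), which has to be arranged by the construction rather than dismissed.
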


The analogous lemma holds, with essentially the same proof, for the $3$-flag case, rather than the $Y$-shaped case. 

\begin{proof}
We induct on the number of critical paths. The base case is the $Y$-shaped case we have already treated. Suppose that $F\colon\Gamma\to\RR$ admits an $H$-modification satisfying the above requirements. We attach a new critical path $w$ starting at a critical point $A$ and terminating at $B$. Note that $B$ can either be a cycle point or lie on an existing critical path. In any event, we may assume that new path is longer than all other critical paths. 

Observe that if $B$ does not lie on a minimal critical path, the inequalities between edge lengths force $AB$ to be \emph{very long} in the sense described above. Similarly, if $B$ lies in the interior of $A_1D$ or $A_2D$, the inequalities imply that $AB$ is very long. In these cases, as well as when $B$ lies in the interior of $DB_1$ but $AB$ is very long, we use the construction of Case I in the previous lemma to conclude.

The only remaining case is when $B$ lies in the interior of $DB_1$ but $AB$ is not very long. Then we have
\[
|A_3D| + |DB| < |AB| \le |A_3D| + 2|DB|,
\]
so there exists a point $E$ in the interior of $DB$ such that
\[
|AB| = |A_3D| + 2|DE| + |EB|.
\]

Let $\widehat F: \widehat \Gamma \to \Delta$, and define $T = \overline{\Delta \setminus \mathbb{R}}$. Let $S \subset \text{Im}(\widehat F)$ be the union of all rays attached to $\widehat F(E)$ that do not meet $\mathbb{R}$. Mark a point $F$ in the interior of $AB$ such that $|AF| = |A_3D| + 2|DE|$.

We define an intermediate map $\Gamma_1 \to \Delta$ as follows:

\begin{enumerate}
    \item \textbf{Source.} Start with $\widehat{\Gamma} \cup AB$, removing all critical paths or modifications attached to $EB_1$. Glue a copy of $T$ by identifying $\widehat F(A_3E)$ with $AF$, and glue a copy of $S$ to $E$ by identifying $\widehat F(E)$ with $E$. Denote the resulting graph by $\Gamma_1$.
    
    \item \textbf{The map.} Map $\widehat{\Gamma}$ with $EB_1$ (and all removed critical paths or modifications) via $\widehat F$. Map the copies of $T$ and $S$ identically with slope $1$. Contract $EB_1$, $FB$, and the cycle to the single point $\widehat F(E)$. This intermediate map is well-defined and harmonic.
\end{enumerate}

Applying Lemma~\ref{lemma:glueing}, and noting that $EB_1 \cup FB \cup$ the cycle forms the $Y$-shaped configuration in Proposition~\ref{construction: genus 1 with one Y-leg_type A}, produces a cycle-decomposing $H$-modification satisfying the claim. Finally, we reattach the previously deleted critical paths in order of increasing length; each satisfies the very long condition, so the argument above applies, completing the construction.
\end{proof}

We come to the final statement. 

\begin{theorem}
Let $F\colon\Gamma\to\RR$ be a balanced function on a tropical curve of genus $1$. Assume it contracts the cycle and satisfies the genericity conditions of Lemma~\ref{lem: generic-lengths}. Then $F$ admits a cycle decomposing $H$-modification. 
\end{theorem}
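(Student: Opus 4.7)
The plan is to proceed by induction on the number of critical paths, taking as base cases precisely the configurations worked out in Section 3.3 and as inductive step the attachment procedure of Lemma~\ref{lem: technical-1}, with the slope adjustment trick of Remark~\ref{rem: slope-adjustment} handling the cases that lemma does not directly cover. First, we apply Lemma~\ref{lem: generic-lengths} to reduce to the generic situation in which $\Gamma$ is trivalent away from at most one $4$-valent vertex located on the unique minimal critical path, there are at most two minimal critical paths, and all non-minimal critical paths have pairwise distinct lengths. Well-spacedness then splits into three mutually exclusive base cases: two minimal trivalent critical paths ($A_1B_1$ and $A_2B_2$), a unique minimal critical tree of $Y$-shape, or a unique minimal critical tree with a $4$-valent critical point (the $3$-flag case).

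For the base case, observe that when we have two minimal critical paths the construction of Proposition~\ref{prop:g1_speyer_two_legs} furnishes a cycle decomposing $H$-modification: the cycle decomposes after removing a small ball around $\hat F(B_1)$ together with $\hat F(A_1B_1)$, condition (iii) is built into the construction by making newly added and $F$-contracted edges have slope $1$, and condition (i) holds by construction. In the $Y$-shape case the construction of Proposition~\ref{construction: genus 1 with one Y-leg_type A} does the same, and the $3$-flag case is handled by the analogous construction in the proof of Proposition~\ref{lem: genus 1 with 4-valent-critical-point}. In the latter two cases, the preceding lemma of this subsection (stated for $Y$-shapes and valid verbatim for the $3$-flag case) already builds into the modification the additional scaffolding (the slope-$2$ segment $DB_1$ and the contracted attachments with prescribed length inequalities) that will make the induction go through.

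For the inductive step, order the non-minimal critical paths so that at each stage we attach a single new critical path $w$ with critical endpoint $A$ and attachment point $B$. If $w$ is very long in the sense introduced before Lemma~\ref{lem: technical-1}, that lemma directly produces a cycle decomposing $H$-modification of the enlarged graph while preserving the image of the minimal critical tree. Otherwise, if $B$ lies on a non-minimal critical path, we appeal instead to the second lemma of this subsection (Case~I there is subsumed by Lemma~\ref{lem: technical-1}, but Case for short attachments to $DB_1$ introduces the slope adjustment on a mid-point $E$ of $DB_1$); after this slope adjustment the remaining critical paths that had been removed during the bookkeeping step re-enter as very long attachments, and can be reattached by Lemma~\ref{lem: technical-1}. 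The cycle decomposing property is preserved at each step because (i) the image of the minimal critical tree is unchanged, (ii) only rays and fold-back segments are added to the target, and (iii) new $F$-contracted edges outside the minimal critical tree receive slope $1$ by construction.

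The main obstacle is the second bullet: when the new critical path is neither very long nor shares all its geometry with a previously handled piece. The idea to overcome this is exactly Remark~\ref{rem: slope-adjustment}, which shows that by choosing an interior point $E$ of $DB_1$, making the map slope $2$ on one side of $E$ and slope $1$ on the other, and absorbing the resulting harmonicity failure at $E$ by attaching an isomorphic copy of the relevant subtree of the target, one creates exactly the ``extra target length'' needed to fit $AB$. Verifying that this construction can be performed compatibly in the presence of other attached critical paths requires showing that the subtree attached at $E$ does not interfere with previously-chosen attachment data, and this is guaranteed by the fact that the cycle decomposing hypothesis ensures a uniform radius $r$ exists, on whose scale all these local constructions can be performed independently before gluing via Lemma~\ref{lemma:glueing}.
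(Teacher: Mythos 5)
Your proposal follows essentially the same route as the paper's own proof: reduce via Lemma~\ref{lem: generic-lengths}, take the explicit constructions of Section~3.3 (two disjoint minimal paths, the $Y$-shape, the $3$-flag case) as cycle decomposing base cases, attach the remaining critical paths in increasing order of length using Lemma~\ref{lem: technical-1} for very long attachments, and handle the short attachments via the slope adjustment trick of Remark~\ref{rem: slope-adjustment} together with the temporary detach/re-attach of the now-very-long paths and the gluing of Lemma~\ref{lemma:glueing}. The only quibble is a terminological slip (the short attachments treated by the unnamed $Y$-shape lemma land on the segment $DB_1$ of the \emph{minimal} critical tree, not on a non-minimal critical path, while attachments to non-minimal paths are automatically very long in your ordering), but the substance of the argument matches the paper's.
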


\begin{proof}
First, suppose there are two different critical trees each containing a critical path of minimal length. Then we apply the construction made in Proposition~\ref{prop:g1_speyer_two_legs}.
Viewing the rest of the critical paths as being attached successively in increasing order of length, they are all very long, and we're done by the previous lemma. 

Otherwise, there is a unique critical tree containing a critical path of minimal length. It can either be $Y$-shaped or the $3$-flag case; we'll treat the former and leave the latter to the reader. 

By Lemma~\ref{lem:Y_shape}, it suffices to consider attachments of new critical paths that are not attached to the $Y$-shaped tree. We proceed by induction on the number of such attachments, and attach them in increasing order of length. 

Otherwise, there is a unique critical tree $T$ containing a critical path of minimal length. 
This tree can be built upon either the $Y$-shaped configuration or the $3$-flag configuration; we treat the former explicitly here, leaving the latter to the reader.

We begin with the ``bare'' $Y$-shaped configuration $T$, removing all other critical path attachments and ordering them by increasing length. We then reattach these paths in this order. Suppose that the first $k-1$ paths (with $k \ge 1$) are attached to $T$, while the $k$-th path is the first one not attached. Consider the union of $T$ with the first $k-1$ attachments. Applying Proposition~\ref{lem:Y_shape} produces a cycle-decomposing $H$-modification with a distinguished critical vertex $A_3$ and a point $D$ on $CB_1$ having the desired properties.

If the $k$-th attachment does not exist (i.e., all paths attach to $T$), the proof is complete. Otherwise, let $AB$ denote the $k$-th attachment. If $AB$ is very long, we proceed exactly as in Case II of Lemma~\ref{lem: technical-1}.

Now assume $AB$ is not very long. Temporarily remove all critical paths attached to the interior of $DB_1$. Next, apply the slope adjustment technique (Remark~\ref{rem: slope-adjustment}): select a point $E \in DB_1$ and reduce the slope along the segment $DE$ from $2$ to $1$, choosing $E$ so that the slope-weighted length of $A_3B_1$ equals that of $AB$. With this adjustment, construct a cycle-decomposing $H$-modification using an argument analogous to Proposition~\ref{lem:Y_shape}, substituting the $Y$-shaped configuration in the gluing step with the construction from Proposition~\ref{prop:g1_speyer_two_legs}.

Finally, collect all previously removed paths—those from the interior of $DB_1$ and those removed at the beginning of the proof—and reattach them in order of increasing length. In this new configuration, every attachment satisfies the very long condition. The proof then concludes by applying Case I or Case II of Lemma~\ref{lem: technical-1} as appropriate.
\end{proof}


\section{Genus two results}\label{sec: genus 2}

In this section, we prove Theorems~\ref{thm: genus-2-A} and~\ref{thm: genus-2-B}, along with related results. We begin by establishing the existence of $H$-modifications in several base cases via explicit constructions. We then use moduli-theoretic considerations to explain why ``sufficiently long'' trees may be grafted onto these constructions, thereby obtaining the general results. 

\subsection{Explicit constructions}\label{sec: genus2-explicit-constructions}

Any abstract tropical curve of genus $2$ can be stabilized by pruning trees and removing bivalent vertices. In the trivalent case, the resulting graph is necessarily either a $\Theta$-graph or a dumbbell graph. The complexity in the two cases is analogous, so we focus primarily on the $\Theta$ case and include a representative result for the dumbbell case at the end of the section. 

We study balanced functions that contract a genus $2$ subgraph. A dimension count suggests that realizability should impose two conditions on the moduli of the domain curve. We exhibit several distinct ways in which these two conditions may arise. 

We begin with the genus $2$ analogue of Proposition~\ref{prop:g1_speyer_two_legs}. Here, the codimension-$2$ condition on moduli appears as a direct generalization of Speyer's condition. 

\begin{proposition}\label{prop: genus2-basic}
Consider the map $F\colon\Gamma\to\mathbb{R}$ described in Figure~\ref{fig: genus2-figure} below. 
\begin{figure}[h!]
\includegraphics{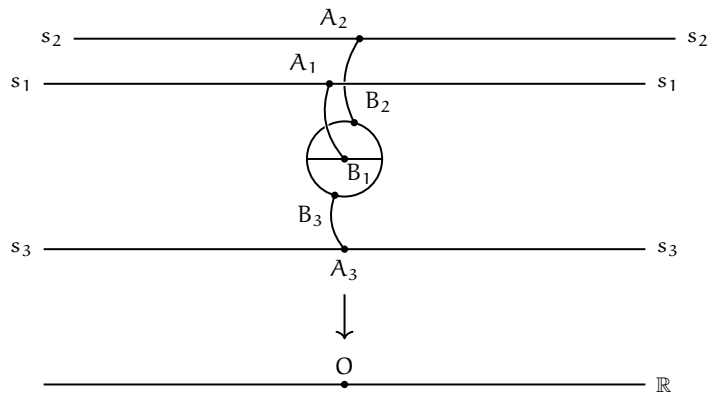}
\caption{The map depicted in this figure contracts the $\Theta$ subgraph, as well as the edges $A_iB_i$. It has slope $s_i$ along the labeled rays, in the indicated direction.}\label{fig: genus2-figure}
\end{figure}
Suppose the three edges labeled $A_iB_i$, attached to distinct edges of the $\Theta$, all have the same length $\ell$, and that these edges together with the $\Theta$ subgraph are contracted to $O$. Then $F$ admits an $H$-modification and is therefore realizable.
\end{proposition}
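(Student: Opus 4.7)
The approach is to explicitly construct an $H$-modification by adding a single ray to the target, generalizing Proposition~\ref{prop:g1_speyer_two_legs}.

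First I would modify the target by attaching a ray $r$ at $O$, with a distinguished point $N$ at distance $\ell$ from $O$. Each leg $A_iB_i$ is then mapped isomorphically onto $\overline{ON}$ with slope $1$, so that all three critical points $A_i$ share the common image $N$ and all three points $B_i$ share the common image $O$. The non-contracted rays at each $A_i$ are treated as in the genus-one base case: I break each such ray at a new vertex whose image is $O$, sending the portion between $A_i$ and this new vertex into $r$ with slope $1$ and the remaining unbounded ray into $\RR$ with slope $s_i$. Locally at $A_i$ the required Hurwitz cover is $z \mapsto z^{s_i}$ together with trivial covers for any auxiliary rays.

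The heart of the construction is folding the contracted $\Theta$ onto $\hat T$ so that $\hat F$ becomes finite and harmonic. I send both $\Theta$-vertices $V_1$ and $V_2$ to $N$, and fold each $\Theta$-edge $E_i$ onto $\overline{ON}$ as a piecewise-linear map of slope $\pm 1$, inserting turning points in the interior of the two half-edges $V_k B_i$ to accommodate the arbitrary length of $E_i$. At each turning point I attach auxiliary rays into the $r$-direction to restore harmonicity, and at each $B_i$ I attach further rays into the two tangent directions of the original $\RR$ at $O$ so that the local pure-degree condition at $O$ is satisfied in all three tangent directions. Because all three legs have the common length $\ell$, the three $B_i$'s share the single image $O$, so the folding of the three $\Theta$-edges can be made symmetrically and consistently.

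Local realizability is then verified vertex-by-vertex. At each $V_k$, each $B_i$, and each turning point, the local Hurwitz data specify a degree-$d$ cover of $\PP^1$ by $\PP^1$ whose branch profile consists of the prescribed slopes along each tangent direction; existence follows from Riemann--Hurwitz and standard Hurwitz-number arguments in genus zero. The main obstacle is the combinatorial bookkeeping of compensating rays at each $B_i$ and each turning point, since the slopes and multiplicities must be chosen consistently so that the three tangent directions at $O$ all accumulate the same local degree; the equal-length hypothesis is what makes a symmetric choice possible. If instead the three legs had distinct lengths, the $B_i$'s would map to three distinct points on $r$ and no symmetric folding could match the required degrees simultaneously --- this is precisely the codimension-two condition in moduli that the proposition captures, paralleling the excess dimension $2$ noted after Theorem~\ref{thm: genus-2-B}.
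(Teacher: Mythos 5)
Your construction, as written, does not produce an $H$-modification of $F$, because it reverses the roles of the endpoints of the legs. A modification must fit into the commutative square in the definition: composing $\hat F$ with the contraction $\hat T\to\RR$ must recover $F$ on all of $\Gamma$. Each critical point $A_i$ carries the two non-contracted rays of slope $s_i$, whose $F$-values are already pinned in $\RR$; since your $\hat T$ adds rays only at $O$, a point at distance $0<d<\ell$ from $A_i$ along such a ray has $F$-value $\pm s_i d\neq 0$, so its $\hat F$-image must be that very point of $\RR$, and by continuity $\hat F(A_i)=O$ (this is exactly the observation used in the proof of Proposition~\ref{prop:copying_paths}: critical points land in the original target). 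Your assignment sends $A_i$ to $N$ and re-routes an initial segment of each non-contracted ray down $r$, so those points are sent into the fiber over $O$ while their $F$-values are nonzero: the square does not commute. The same defect affects the auxiliary rays you attach at the $B_i$ ``into the two tangent directions of the original $\RR$'': rays added at an $F$-contracted point must map into the fiber over $O$, i.e.\ into the added tree, never into $\RR\setminus\{O\}$. Relatedly, your local picture at $A_i$ (three slope-$1$ flags all pointing toward $O$ and nothing pointing away) is not harmonic and is not realized by $z\mapsto z^{s_i}$; the correct bookkeeping, as in Proposition~\ref{prop:g1_speyer_two_legs} and Figure~\ref{fig: basic-Speyer}, keeps the two $s_i$-rays in $\RR$, sends $A_iB_i$ up the new ray with slope $1$, and adds $s_i-1$ rays at $A_i$ mapping up that ray to restore pure degree $s_i$.

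Even with the orientation corrected, folding the entire $\Theta$ back onto the segment $\overline{ON}$ with both trivalent vertices going to one point requires each half-edge from a trivalent vertex to a $B_i$ to have length at least $\ell$ (you fold with slopes $\pm1$), and the hypotheses place no lower bound on the $\Theta$-edge lengths -- only the legs have prescribed length. The paper's proof goes the other way: with all three $B_i$ mapping to the common point $P$ at height $\ell$ (this is where the equal-length hypothesis enters), one adds a \emph{second} ray at $P$ and maps the two components of $\Theta\setminus\{B_1,B_2,B_3\}$ onto the two rays emanating from $P$, so that each $B_i$ becomes a degree-one vertex with an identity cover; inside each tripod the shortest leg fixes the height of its trivalent vertex, and only the excess of the two longer legs is folded back, with turning points and matching points decorated by extra rays to restore harmonicity. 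If you correct the orientation and replace the fold-back of the $\Theta$ toward $O$ by this ``push outward onto two new rays'' device, your outline becomes the paper's argument; as it stands, the construction is not a modification of $F$ and is not harmonic at the vertices where it matters.
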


\begin{proof}
We make the following modifications: add a ray to $\mathbb{R}$ at $O$; mark the point $P$ on the ray which has distance $\ell$ from $O$, and add another ray at $P$. Each $A_i B_i$ is mapped to the edge with slope $1$, and the two path components of $\Theta\setminus\{B_1,B_2,B_3\}$ are mapped onto the two rays extending from $P$ respectively. This is depicted in Figure~\ref{fig: initial-genus2basic} below. 
\begin{figure}[h!]
\includegraphics[scale=0.75]{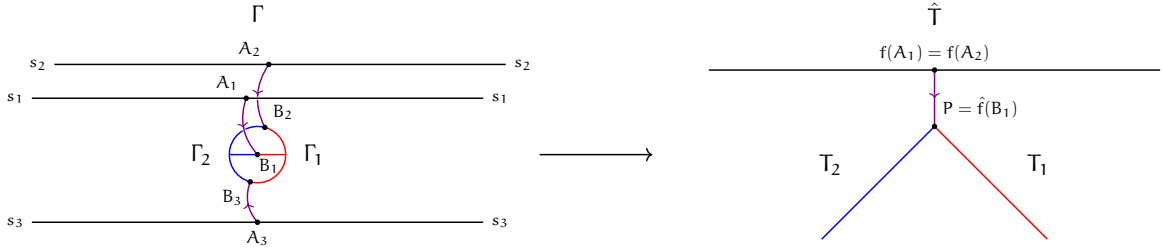}
\caption{The initial step in the modification required for Proposition~\ref{prop: genus2-basic}.}\label{fig: initial-genus2basic}
\end{figure}
Now consider one of the path components of $\Theta\setminus\{B_1,B_2,B_3\}$ and the ray in $T$ which it is mapped to. Let $Q$ be the trivalent point in the path component and choose the edge which has shortest length $l'$. At the point with distance $l'$ away from $P$ on the added ray in $T$, add another ray. We may assume the edge is $B_1 Q$. Mark the points $Q_2$ and $Q_3$ on the other edges which have distance $l'$ from $B_2$ and $B_3$ respectively, and add a ray at $Q_2$ and $Q_3$. At the midpoints of $Q Q_2$ and $Q Q_3$, add two rays. Map the edges and rays to $\hat{\Gamma}$ all with slope $1$. The local picture is depicted in Figure~\ref{fig: second-step-genus2-basic}.
\begin{figure}[h!]
\includegraphics[scale=0.75]{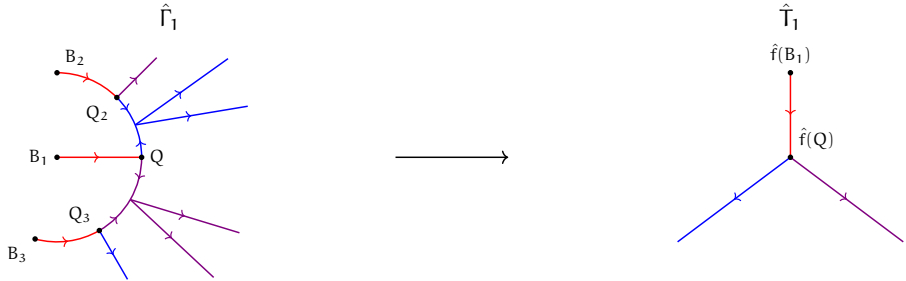}
\caption{The modifications required on the $\Theta$ subgraph.}\label{fig: second-step-genus2-basic}
\end{figure}
Repeat the same set of modifications for the other path component, and balance the slopes at $A_1,A_2,A_3$ the same way as in Proposition \ref{prop:g1_speyer_two_legs}. Then we obtain an $H$-modification of $F\colon\Gamma\to\RR$, and $F$ is realizable.
\end{proof}

Based on this construction, one might guess that ``three edge lengths equal'' is a perfect generalization to the genus $2$ case. But the next proposition shows that the picture is more subtle. 

A tropical curve of genus $2$ is equipped with a unique involution whose quotient is a tree -- the ``tropical hyperelliptic involution''. The fixed points of this involution are {\it Weierstrass points} and points that are swapped by the involution are called {\it conjugate}. For a $\Theta$ graph, the Weierstrass points are the midpoints of each of the three edges, and the conjugate points are pairs of points whose center of mass is the midpoint. 

\begin{proposition}\label{construction: theta with two symmetric T-legs}
Consider the map $F\colon\Gamma\to\mathbb{R}$ depicted in Figure~\ref{fig: genus2-figure-2}.
\begin{figure}[h!]
\includegraphics{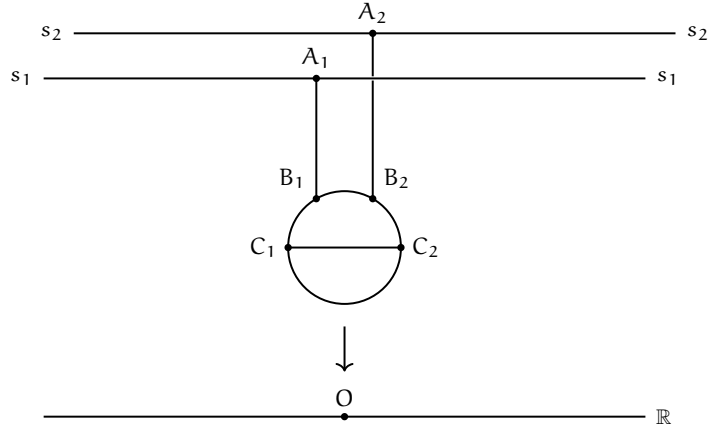}
\caption{The map in this figure again contracts the $\Theta$ subgraph, as well as the edges $A_iB_i$. It has slope $s_i$ along the labeled rays, in the indicated direction.}\label{fig: genus2-figure-2}
\end{figure}
Suppose the edges $A_1B_1$ and $A_2B_2$ have equal length, and further that $B_1$ and $B_2$ are conjugate points, i.e. the lengths $B_1 C_1=B_2 C_2$. There exists an $H$-modification of $F$, and so it is realizable.
\end{proposition}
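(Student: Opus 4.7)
The plan is to construct an explicit $H$-modification that exploits the hyperelliptic involution on the $\Theta$ graph, which exchanges $B_1$ and $B_2$. Because $B_1, B_2$ are conjugate, we can fold the edge $E$ of $\Theta$ containing them at its midpoint $W$, making $\hat F(B_1) = \hat F(B_2)$; this is exactly what the common length hypothesis on the attaching segments requires of the images of $A_1B_1$ and $A_2B_2$.

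First I would modify the target by adding a ray $r_1$ at $O$, marking the point $P$ on $r_1$ at distance $\ell$, and mapping each $A_iB_i$ onto $OP$ with slope $1$ so that $A_i \mapsto O$ and $B_i \mapsto P$; at each $A_i$, I add standard balancing rays ($s_i - 1$ copies parallel to $A_iB_i$ and one ray of slope $s_i$ in the opposite $\mathbb{R}$-direction). Let $V, V'$ denote the trivalent vertices of the $\Theta$ and $W, W', W''$ the midpoints of the three edges $E, E', E''$, with $E$ carrying $B_1, B_2$. I then add a ray $r_2$ at $P$ and fold the middle of $E$ by mapping both $B_1W$ and $B_2W$ onto $r_2$ with slope $1$; conjugacy $|B_1W| = |B_2W|$ makes the fold coherent, and $W$ becomes a turning point at the tip. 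The outer halves $VB_1$ and $V'B_2$ map back along $r_1$ with slope $1$, identifying $\hat F(V) = \hat F(V')$ at position $\ell - |VB_1|$ on $r_1$ (or, if $|VB_1| > \ell$, on $\mathbb{R}$ itself via a minor rerouting). For the remaining edges $E', E''$, which join $V$ and $V'$ with no attached legs, I add two further rays at $\hat F(V)$ in $\hat T$, and fold each edge at its midpoint onto its respective new ray with slope $1$ on each half. The lengths of $E', E''$ are unconstrained, so this step is unobstructed.

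After inserting the standard balancing rays at $V$, $V'$, at the three midpoints $W, W', W''$, and at $B_1, B_2$ to bring all local degrees up to $2$, I would glue the pieces using Lemma~\ref{lemma:glueing} and verify local realizability vertex-by-vertex. At each midpoint the local cover is $z \mapsto z^2$ on $\mathbb{P}^1$; at $V, V'$ each chart is degree $1$, but the two charts over $\hat F(V)$ together realize the global degree $2$ structure; at $B_1, B_2$ and at the auxiliary balancing vertices, the resulting genus-$0$ Hurwitz problems are solvable by explicit rational functions. The main obstacle is the combinatorial bookkeeping of the balancing rays so that all local degrees agree simultaneously, particularly at $\hat F(V)$ where three folded edges of the $\Theta$ meet. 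The conjugacy hypothesis is used precisely to ensure that the fold on $E$ is compatible with the attachment of both legs at the common point $P$; without it, $B_1$ and $B_2$ could not share an image, and no such modification could be glued together.
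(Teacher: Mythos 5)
Your construction is, at its core, the same as the paper's: exploit the hyperelliptic symmetry by folding the middle segment $B_1B_2$ at the midpoint of its edge onto a new ray at $P$, folding the two legs $E',E''$ at their midpoints onto new rays attached at the common image of the trivalent vertices, adding balancing rays, and checking local Hurwitz solvability before gluing via Lemma~\ref{lemma:glueing}. The one genuine difference is what you do with the outer halves $C_1B_1=VB_1$ and $C_2B_2=V'B_2$: the paper continues them \emph{outward} past $P$ with slope $1$, so that conjugacy ($|B_1C_1|=|B_2C_2|$) forces $\hat F(C_1)=\hat F(C_2)=Q$ at distance $\ell+|B_1C_1|$ from $O$, and the rays needed to fold $E',E''$ are attached at $Q$; you send them \emph{backward} toward $O$. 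Your variant can be repaired into a valid $H$-modification, but the outward choice buys simplifications that you must pay for by hand: (i) no case split when $|VB_1|\geq\ell$ (your ``minor rerouting'' onto $\RR$ is genuine extra construction, including a new vertex where $VB_1$ crosses $O$); (ii) with the backward fold, $B_1$ acquires two edges mapping toward $O$ and the fibre degree along $r_1$ jumps at $\hat F(V)$, so besides $r_3,r_4$ you are forced to add rays at $V,V'$ in the toward-$O$ direction and track where they cross $O$ --- in the paper $B_1$ and $C_1$ have pure local degree $1$ and need no repair at all; (iii) since $P$ and $\hat F(V)$ are vertices of $\hat T$, the interior points of $A_1B_1,A_2B_2$ lying over $\hat F(V)$, and the points of your straight auxiliary rays lying over $P$ and $\hat F(V)$, must also be decorated with rays onto $r_2,r_3,r_4$; the paper sidesteps this by attaching, at each $A_i$, $s_i-1$ copies of the entire modified target tree rather than straight rays. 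Two statements should also be corrected: adding ``one ray of slope $s_i$ in the opposite $\RR$-direction'' at $A_i$ is both unnecessary (the original graph already carries the two opposite rays of slope $s_i$) and not permitted, since a new ray contracting to $A_i$ must map into the part of $\hat T$ that contracts to $O$ for the modification square to commute; and your closing sentence misplaces the hypothesis --- $\hat F(B_1)=\hat F(B_2)=P$ comes from $|A_1B_1|=|A_2B_2|$, while conjugacy is what makes the slope-one fold of $B_1B_2$ close up at a single point and, crucially, makes $C_1$ and $C_2$ (hence the folds of $E',E''$) share an image.
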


\begin{proof}
Add a ray at $O$ in the target $\RR$, and rays on the added ray at distances $A_1B_1$ and $A_1B_1+B_1C_1$ from $O$ respectively. Do the same modification $s_1-1$ and $s_2-1$ times at $A_1,A_2\in\Gamma$. These are mapped naturally to $\hat{T}$ with slope $1$ on every edge. Also add two rays to the midpoint of $B_1 B_2$, add two rays to the midpoints of the two edges connecting $C_1$ and $C_2$. Map the edges and rays as shown in Figure~\ref{fig: genus2-weierstrass-proof}.
\begin{figure}
\includegraphics[scale=0.75]{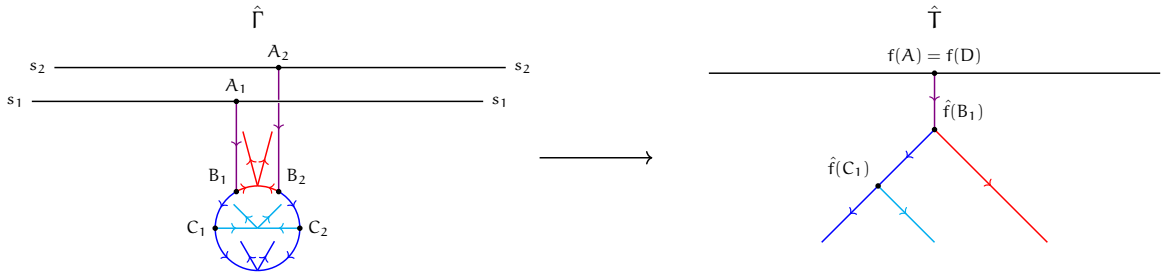}
\caption{The modifications required in the proof of Proposition~\ref{construction: theta with two symmetric T-legs}.}\label{fig: genus2-weierstrass-proof}
\end{figure}
The modification $\hat{F}$ is an $H$-modification.
\end{proof}

Realizability is a closed condition, the proposition also holds when both attaching points are a single Weierstrass point. However, at the Weierstrass points, new phenomena appear. 

\begin{proposition}\label{prop: genus-2-Weierstrass}
Consider the map $F\colon\Gamma\to\mathbb{R}$ depicted in Figure~\ref{fig: genus-2-Weierstrass}.
\begin{figure}[h!]
\includegraphics{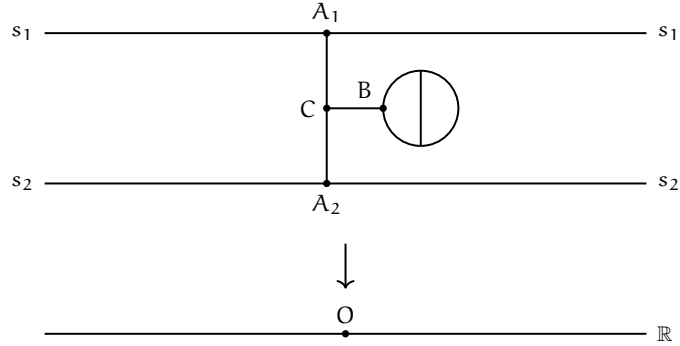}
\caption{The map $F$ contracts the $\Theta$ subgraph, as well as the edges $A_iC$ and $BC$.}\label{fig: genus-2-Weierstrass}
\end{figure}
Suppose $A_1 C$ and $A_2 C$ have equal lengths, and further that $B$ is Weierstrass, or in other words, self-conjugate. There exists an $H$-modification of $F$, and so it is realizable.
\end{proposition}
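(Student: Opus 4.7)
The plan is to exhibit an explicit $H$-modification $\hat F\colon\hat\Gamma\to\hat T$, adapting the Y-leg construction of Proposition~\ref{construction: genus 1 with one Y-leg_type A} and exploiting the tropical hyperelliptic involution $\sigma$ on $\Theta$ that fixes $B$. The key observation is that the quotient $\Theta/\sigma$ is a tripod with one leaf at $B$, and the quotient map $\Theta\to\Theta/\sigma$ is a finite harmonic map of pure degree $2$, with the two halves of each edge of $\Theta$ folding symmetrically onto the tripod. This quotient plays the role in genus~$2$ of the ``doubled loop'' appearing in Speyer's genus-$1$ picture.

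Concretely, I first modify $\mathbb R$ by adjoining a ray $\ell$ at $O$, marking $P$ on $\ell$ at distance $|A_1C|$ and $Q$ on $\ell$ at distance $|A_1C|+2|CB|$ from $O$, and attaching the tripod $\Theta/\sigma$ to $\hat T$ at $Q$ via its $B$-leaf, together with additional branching rays as in the genus-$1$ Y-leg construction. Then, using $|A_1C|=|A_2C|$, I fold the two Y-arms onto $OP$ with slope $1$, sending $A_i\mapsto O$ and $C\mapsto P$, and add $s_i-1$ auxiliary rays at each $A_i$ mapping to $\ell$ with slope~$1$, bringing the pure local degree at $A_i$ to $s_i$. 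Balance at $C$ forces slope~$2$ along $CB$, so $B\mapsto Q$, and on $\Theta$ I take $\hat F|_\Theta$ to be the hyperelliptic quotient map, with slope~$1$ on each half-edge. Pure local degree holds by construction at each vertex ($s_i$ at $A_i$, $2$ at $C$, $B$ and each Weierstrass midpoint, and $1$ at the two trivalent $\Theta$-vertices that map to the tripod center), and each local Hurwitz problem is solvable: the double cover $z\mapsto z^2$ realizes the fully ramified vertices, while any further unmarked branching required by Riemann--Hurwitz is unconstrained by Definition~\ref{def: locally-realizable}. The global $\hat F$ is assembled by repeated use of the gluing Lemma~\ref{lemma:glueing}.

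The main obstacle is verifying that the fold of $\Theta$ around $B$ is compatible with the trunk $CB$, i.e.\ that the two halves of the edge $e_1$ of $\Theta$ on which $B$ lies fold symmetrically onto the $B$-leg of the tripod so that $\hat F$ is genuinely harmonic at $B$. This is precisely where the Weierstrass hypothesis enters: since $B$ is the fixed point of $\sigma$ on $e_1$, both halves of $e_1$ share the common length $|e_1|/2$, and the fold succeeds. For any other position of $B$ on $\Theta$ the two halves would have unequal lengths and the hyperelliptic-quotient strategy would fail, so the Weierstrass condition is genuinely necessary for this construction. The remaining bookkeeping of slopes and auxiliary rays needed to balance global degrees across the various branches of $\hat T$ proceeds in direct analogy with the genus-$1$ Y-leg construction.
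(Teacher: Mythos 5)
Your overall strategy is the one the paper intends: run the genus-$1$ Y-leg construction of Proposition~\ref{construction: genus 1 with one Y-leg_type A} on the arms and trunk, and fold the $\Theta$ symmetrically at the point where the slope-$2$ trunk arrives; describing that fold as the hyperelliptic quotient $\Theta\to\Theta/\sigma$ is a clean repackaging of the specialization of Proposition~\ref{construction: theta with two symmetric T-legs} to the case where the two conjugate attaching points collide at the Weierstrass point. Your observation that unmarked Riemann--Hurwitz branching is not excluded by Definition~\ref{def: locally-realizable} is also fair, although the paper's own constructions prefer to mark it with an extra ray.

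There is, however, a concrete problem with your target. A tropical modification in the sense of Section~\ref{sec: modifications} only ever attaches copies of $\RR_{\geq 0}$, i.e.\ infinite rays, so you are not allowed to attach the \emph{compact} tripod $\Theta/\sigma$ at $Q$; your verification of pure local degree ($2$ at $B$ and at the Weierstrass midpoints, $1$ at the trivalent vertices) silently uses the facts that the tripod's leaves are $1$-valent and that $Q$ has no direction continuing along $\ell$, and neither holds for any admissible modification. Replacing the tripod by what a modification can produce -- an infinite ray $r$ at $Q$, with two further infinite rays attached at the point of $r$ at distance $|e_1|/2$ -- purity fails in the new ``onward'' directions: at $B$ (degree $0$ along $\ell$ past $Q$ against degree $2$ in the other two directions), at the two trivalent $\Theta$-vertices (a fourth direction along $r$ with degree $0$), and at the two remaining Weierstrass midpoints (degree $0$ past the fold). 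The cure is exactly the one the paper uses in Propositions~\ref{construction: genus 1 with one Y-leg_type A} and~\ref{construction: theta with two symmetric T-legs}: add a ray at $B$ mapping past $Q$ with slope $2$, a ray at each trivalent vertex mapping onward along $r$ with slope $1$ (or re-route one $\Theta$-edge to fold onto the continuation of $r$), and two rays at each of the other Weierstrass midpoints mapping onward past the fold point; likewise your ``$s_i-1$ auxiliary rays'' at the $A_i$ must be copies of the whole modified subtree above $O$, since a bare ray mapped with slope $1$ fails purity over the vertex $Q$. Your closing appeal to bookkeeping ``in direct analogy with the genus-$1$ construction'' gestures at these insertions, but as stated your purity claim is made against a disallowed target, so the extra rays are necessary rather than optional; once they are inserted, your construction coincides with the paper's, and the gluing via Lemma~\ref{lemma:glueing} goes through as you say.
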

\begin{proof}
    The proposition follows by combining the construction in Proposition~\ref{construction: genus 1 with one Y-leg_type A} with Proposition~\ref{construction: theta with two symmetric T-legs} above. The details are left to the reader. 
\end{proof}

We expect that there are analogues of the theorems above in the other combinatorial type of genus $2$ curve -- the dumbbell graph. These are building blocks of the now-famous ``chain of cycles'', appearing in~\cite{CDPR} and later developments in Brill--Noether theory. We include the following representative construction for the dumbbell. 

\begin{proposition}
Consider the map $F\colon\Gamma\to\RR$ depicted in Figure~\ref{fig: genus-2-chain-of-loops} below. 
\begin{figure}[h!]
\includegraphics{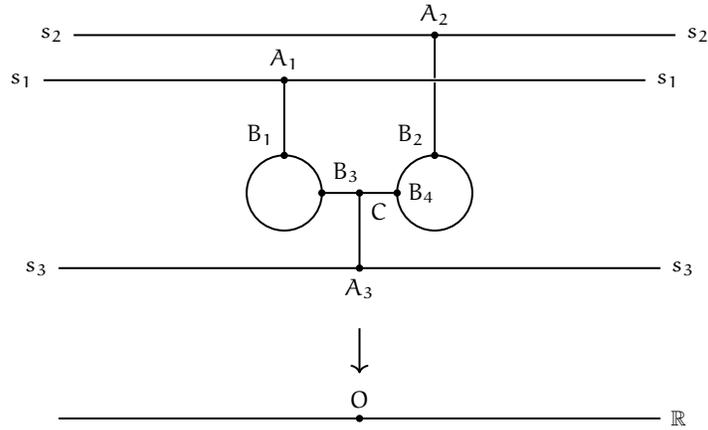}
\caption{The map $F$ contracts the genus $2$ subgraph, as well as all the edges incident to this subgraph.}\label{fig: genus-2-chain-of-loops}
\end{figure}
Suppose the distance between $A_1$ and $B_1$ is the same as the distance between $A_3$ and $B_3$, and the distance between $A_3$ and $B_4$ is the same as the distance between $A_2$ and $B_2$. There exists an $H$-modification of $F$, and so it is realizable. 
\end{proposition}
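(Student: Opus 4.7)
The plan is to reduce the dumbbell case to two copies of the well-spaced genus one picture from Proposition~\ref{prop:g1_speyer_two_legs}, glued at the shared critical point $A_3$. I would split $\Gamma$ along the bridge of the dumbbell into two halves $\Sigma_1, \Sigma_2$, where $\Sigma_1$ contains the cycle with the attachments $B_1, B_3$ and $\Sigma_2$ the cycle with $B_2, B_4$; the critical point $A_3$ lies in both halves. The hypotheses $|A_1 B_1| = |A_3 B_3|$ and $|A_3 B_4| = |A_2 B_2|$ are precisely the well-spacedness conditions on each cycle separately: each cycle now sees two critical paths of equal minimal length, one contributed by a dedicated critical point ($A_1$ or $A_2$) and one contributed by the shared critical point $A_3$.

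Next I would build the target modification by attaching two rays to $O \in \RR$, one per cycle, marking on each ray the appropriate point and further appending secondary rays that host the local Hurwitz covers, exactly as in Proposition~\ref{prop:g1_speyer_two_legs}. On each half $\Sigma_i$ I would then execute the construction of that proposition: the two critical paths to that cycle map with slope $1$ onto the added ray, the contracted cycle is unfolded over it, and auxiliary rays at the critical points handle their slopes. This yields harmonic, locally realizable modifications $\hat F_i\colon \hat\Sigma_i \to \hat T_i$. Crucially, Proposition~\ref{prop:g1_speyer_two_legs} gives slope $1$ on every edge that is either $F$-contracted or newly added, which is exactly the regularity hypothesis required by Lemma~\ref{lemma:glueing}.

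To finish, I would check compatibility at the shared point $A_3$ and invoke the gluing lemma. The two contracted branches $A_3 B_3$ and $A_3 B_4$ are carried by the two halves into \emph{distinct} rays of the combined target $\hat T = \hat T_1 \cup_O \hat T_2$, while the non-contracted flags at $A_3$ split cleanly between the two halves according to which cycle's rays they belong to. Hence the local Hurwitz problem at $A_3$ is a single $\PP^1$ with four distinguished points mapping to a star of rays in $\hat T$, and local realizability reduces to a routine Riemann--Hurwitz count once the slope data are read off from the two halves. With this compatibility in place, Lemma~\ref{lemma:glueing} applied to $\Gamma = \Sigma_1 \cup \Sigma_2$, whose intersection near $A_3$ contains no $F$-contracted edge needing extra treatment, produces the desired global $H$-modification $\hat F\colon \hat\Gamma \to \hat T$.

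The main obstacle is verifying that the two half-modifications genuinely agree in a neighborhood of $A_3$, which requires tracking exactly which tangent directions at $A_3$ are absorbed into which half and checking that the local Hurwitz data prescribed by each half are mutually consistent. The symmetric length hypotheses of the statement are designed precisely to make this agreement possible, but, as in every application of Lemma~\ref{lemma:glueing}, the coincidence of the two local modifications must be established by direct inspection rather than by generic appeal.
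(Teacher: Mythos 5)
The part of your argument that deals with the shared critical point $A_3$ is essentially fine (the glued local problem at $A_3$ is solved by $z\mapsto z^{s_3}$, fully ramified in the two non-contracted directions and unramified over the two new rays), but you have misidentified the main obstacle: the real difficulty in the dumbbell case is the \emph{bridge} joining the two cycles, and your proposal never accounts for it. The bridge lies in the contracted genus~$2$ subgraph, so it survives into any modification of the domain, and finiteness of an $H$-modification forces it to be mapped with nonzero slope. Consequently the images of the two cycles cannot be chosen independently: once you run the construction of Proposition~\ref{prop:g1_speyer_two_legs} on each half, the bridge endpoints $C_1$ and $C_2$ acquire prescribed images on the two unfolded cycle-rays, and the bridge must be mapped onto a path between these images whose metric length (weighted by integer slopes) matches the bridge length, with harmonicity restored at $C_1$, $C_2$ and at any turning or slope-change points. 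This coordination is precisely what the paper's construction supplies: it maps each cycle by the slope-adjustment/``there and back'' device of Remark~\ref{rem: slope-adjustment} and attaches trees at $C_1$ and $C_2$ to repair harmonicity. In your setup the two independently built half-modifications will in general \emph{not} agree in a neighbourhood of the cut point on the bridge (or, if you place the whole bridge in one half, that half is no longer an instance of Proposition~\ref{prop:g1_speyer_two_legs}, and the image of its dangling bridge end must be matched by hand with the image of $C_2$ under the other half's construction), so Lemma~\ref{lemma:glueing} cannot be invoked as stated; its hypothesis that the modifications coincide near the intersection is exactly what fails.

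Two smaller points. First, your statement that the non-contracted flags at $A_3$ ``split cleanly between the two halves'' would break the argument: if, say, the two rays of slope $\pm s_3$ are distributed one to each half, then neither restriction is balanced at $A_3$, so the pieces are not $F$-allowable and the genus one proposition does not apply; you should instead place all the rays at $A_3$ in both halves, which is permitted since Lemma~\ref{lemma:glueing} only forbids $F$-\emph{contracted} edges in the intersection. Second, if the shared leg has a common contracted stem before branching towards $B_3$ and $B_4$, a split at $A_3$ would force a contracted edge into $\Sigma_1\cap\Sigma_2$, again violating the hypotheses of the gluing lemma, so the decomposition itself needs more care than you give it. In short, the reduction to two genus one pictures is a reasonable starting point, but the bridge (and, to a lesser extent, the bookkeeping at $A_3$) is where the actual content of the paper's proof lies, and your proposal leaves that step unaddressed.
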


\begin{proof}
An $H$-modification can be obtained by similar constructions as ones we have already made. The modification is depicted below:
\[
\includegraphics[scale=0.75]{Figures/g2_dumbbell_shared_leg_modification.tex}
\]
Each cycle is mapped using the construction of Remark~\ref{rem: slope-adjustment} and the trees based at $C_1$ and $C_2$ are also analogous to that remark.  
\end{proof}

\subsection{Appending long trees}

Let $F\colon \Gamma \to \RR$ be a balanced function on a tropical curve. A subgraph $\Gamma_0 \subset \Gamma$ is called a \emph{frame} if $\Gamma \setminus \Gamma_0$ is a forest and each connected component of this forest is attached to the frame by an $F$-contracted edge. These contracted edges will be called \emph{connecting edges}. A frame is \emph{realizable} if it is realizable as a tropical map.

Our goal is to prove the following.

\begin{theorem}\label{thm: long-scaffold}
Let $F\colon \Gamma \to \RR$ be a balanced function with a realizable frame $\Gamma_0$. If the connecting edges are all sufficiently long, then $F$ is realizable.
\end{theorem}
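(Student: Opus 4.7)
The strategy would be constructive: build an $H$-modification of $F$ by grafting $H$-modifications of each tree onto a fixed $H$-modification of the realizable frame, using the length of the connecting edges as slack to guarantee compatibility at the gluing points. This fits the framework already developed in the paper, where realizability is established by modifiability and modifiability is proved by explicit combinatorial constructions.

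Since $\Gamma_0$ is realizable, I would fix an $H$-modification $\hat F_0\colon \hat\Gamma_0 \to \hat\Delta_0$. Label the trees of $\Gamma \setminus \Gamma_0$ by $T_1,\ldots,T_k$, with $T_i$ attached to $\Gamma_0$ at $P_i$ via the connecting edge $e_i$. Since each $T_i \cup e_i$ has genus $0$, Corollary~2.5.10 produces an $H$-modification $\hat F_i\colon \hat T_i \to \hat\Delta_i$ of $F|_{T_i \cup e_i}$. The goal is to attach each $\hat T_i$ to $\hat\Gamma_0$ along $P_i$, attach each $\hat\Delta_i$ to $\hat\Delta_0$ along $F(P_i)$, and invoke the gluing lemma (Lemma~\ref{lemma:glueing}) to produce a global $H$-modification of $F$.

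The main obstacle is that a naive gluing will in general fail harmonicity or local realizability at $P_i$: the tangent directions and local degrees at $P_i$ coming from $\hat F_0$ and from $\hat F_i$ need not match, and the target modification $\hat\Delta_i$ may be incompatible with the local structure of $\hat\Delta_0$ near $\hat F_0(P_i)$. The role of the long connecting edge is to absorb these incompatibilities, much in the spirit of the slope adjustment trick of Remark~\ref{rem: slope-adjustment}. Concretely, by choosing a subdivision point $M_i$ on $e_i$ sufficiently far from $P_i$, I would replace the original $\hat F_i$ with a modification that maps the initial segment $P_iM_i$ with slope $1$ onto a single fresh ray attached to $\hat\Delta_0$ at $\hat F_0(P_i)$, and only initiates the nontrivial portion of the tree modification beyond $M_i$. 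Provided $|e_i|$ exceeds a threshold $L_i$ depending on the behaviour of $\hat F_0$ near $\hat F_0(P_i)$ and on the target depth of $\hat F_i$, this surgery can be performed so that the rays added at $\hat F_0(P_i)$ for different $i$ do not collide.

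Having performed this surgery for each $i$, the hypotheses of Lemma~\ref{lemma:glueing} are satisfied: the subgraphs intersect only at the $P_i$, contain no additional contracted edges in the intersection, and the edges added by the modification incident to $P_i$ have slope $1$. Local realizability along the slope-$1$ transition segments reduces to a trivial Hurwitz problem (an isomorphism $\PP^1 \to \PP^1$), and elsewhere is inherited from $\hat F_0$ or from the $\hat F_i$. The delicate point of the write-up, and the step I expect to require the most care, is the quantification of the threshold $L_i$ and the verification that the choices at distinct attaching points can be made independently; this is a spacing argument internal to $\hat\Delta_0$, and its non-constructive flavour matches the informal commentary about ``compactified tropicalizations'' in the Outlook.
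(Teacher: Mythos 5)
Your construction has the grafting pointed in the wrong direction, and this is not a repairable detail but the crux of the theorem. You propose to map the initial segment $P_iM_i$ of the connecting edge, and then the whole tree modification beyond it, onto a fresh ray attached to $\hat\Delta_0$ at $\hat F_0(P_i)$ -- that is, \emph{away} from the copy of $\RR$ inside $\hat\Delta_0$. But the trees $T_i$ contain critical points of $F$, i.e.\ points carrying edges or rays on which $F$ has nonzero slope, and the commutative square in the definition of a modification forces every such point to map into the original $\RR\subset\hat\Delta$ (indeed, exactly to $F(P_i)$, since the path from $P_i$ to the critical point is $F$-contracted); this is precisely the observation the paper itself uses in the proof of Proposition~\ref{prop:copying_paths}. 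If the tree's critical points were placed at positive distance up a fresh ray, then points just beyond them on the non-contracted rays would have $\hat F$-images still inside an added tree retracting to the single value $F(P_i)$, while $F$ already takes different values there -- the square cannot commute. Consequently the image of the connecting edge must \emph{descend} through the existing structure of $\hat\Delta_0$ from $\hat F_0(P_i)$ down to $F(P_i)\in\RR$; the ``sufficiently long'' hypothesis is exactly what makes this descent possible (the path must be at least as long as the image distance to the base), and harmonicity at the landing forces the turning-point, copying-paths and slope-adjustment analysis of Section~\ref{sec: genus 1} (compare Lemma~\ref{lem: technical-1} and Remark~\ref{rem: slope-adjustment}), none of which your sketch addresses. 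As a sanity check, your construction as written never genuinely uses the length of $e_i$, yet in genus $1$ appending a short critical path to a well-spaced frame destroys realizability by Theorem~\ref{proposition: speyer contracted 1}, so any length-independent gluing argument must be wrong.

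It is also worth noting that the paper's proof of Theorem~\ref{thm: long-scaffold} is of a completely different nature and deliberately avoids constructing any $H$-modification: it tropicalizes the double ramification locus ${\sf D}_g(A)$, realizes the point at infinity of the compactified tropicalization (all connecting edge lengths set to $\infty$) by an explicit nodal curve obtained from gluing a realization of the frame to realizations of the trees, and then concludes formally, from the structure of the compactified cone complex, that points with sufficiently long finite edges lie in the realizable locus. That is why the threshold is non-explicit. A constructive proof along your lines would require detailed control of $\hat F_0$ near the attaching points and a general version of the ``very long path'' machinery; this is substantially harder than the problem you reduce to, and as it stands the reduction itself fails for the reason above.
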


In the situations of interest, the frames will be the realizable tropical maps constructed in Section~\ref{sec: genus2-explicit-constructions}. However, we present the argument in a way that allows any independently constructed realizable tropical maps to serve as frames.

Together with the constructions of Section~\ref{sec: genus2-explicit-constructions}, this theorem immediately implies Theorems~\ref{thm: genus-2-A} and~\ref{thm: genus-2-B}.

\subsubsection{Preparations: tropicalization for the double ramification locus}

We deduce the theorem from formal properties of tropical compactifications~\cite{Tev07,U15} and extended tropicalizations~\cite{ACP}, applied to the double ramification locus in $\mathcal M_{g,n}$.

Fix a vector $A = (a_i) \in \mathbb Z^n$ with vanishing sum. The double ramification locus
\[
{\sf D}_g(A) \subset \mathcal M_{g,n}
\]
parameterizes pointed curves $[C,p_1,\ldots,p_n]$ such that $\mathcal O_C\!\left(\sum a_i p_i\right)$ is trivial.

Let $K$ be a non-archimedean valued field. By results of Abramovich--Caporaso--Payne\footnote{The results there are stated in the language of Berkovich spaces. We require substantially less here and therefore avoid this more technical perspective.}, there is a natural \emph{tropicalization} map
\[
\mathsf{trop}\colon \mathcal M_{g,n}(K)\to \mathcal M_{g,n}^{\sf trop},
\]
from the $K$-points of the moduli space of smooth pointed curves to its tropicalization. This map sends a curve over $K$ to the metrized dual graph of its stable model, as described in Section~\ref{sec: tropicalizations}.

Composing with the inclusion
\[
{\sf D}_g(A)(K)\hookrightarrow \mathcal M_{g,n}(K),
\]
we obtain a map whose domain parameterizes exactly those curves that admit a map to $\mathbb P^1$ with ramification orders prescribed by $A$ at the marked points. Such a map, when it exists, is unique up to scalar.

The image of this map is precisely the locus of tropical curves that admit a balanced function with slope $a_i$ along the $i^{\rm th}$ ray. Again, such tropical maps are unique up to translation when they exist. Thus, this image is exactly the realizable locus of interest.

Since this construction works for any non-archimedean valued field, we choose $K$ whose valuation is surjective onto $\mathbb R$. The image of ${\sf D}_g(A)$ under $\mathsf{trop}$ is then called its tropicalization. The structure theory of tropicalizations implies the following~\cite{ACP,U15}.

\begin{proposition}
The tropicalization of ${\sf D}_g(A)$ is the support of a cone complex in $\mathcal M_{g,n}^{\trop}$. 
\end{proposition}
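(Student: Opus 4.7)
The plan is to identify $\trop({\sf D}_g(A))$ with the image of the cone complex of a proper, log smooth compactification of the Hurwitz space under the forgetful morphism down to $\mathcal M_{g,n}^{\trop}$, and then to appeal to general polyhedral geometry.

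First I would form the compactification $\overline{\mathcal H}_g(A)$ given by the moduli space of logarithmic stable maps, or equivalently admissible Hurwitz covers with branch points marked, from genus $g$ curves to $\PP^1$ with ramification profile $A$~\cite{ACV,Che10,HM82,Li01}. This is a proper, logarithmically smooth Deligne--Mumford stack. It carries a natural forgetful morphism
\[
\pi\colon \overline{\mathcal H}_g(A)\to \overline{\mathcal M}_{g,n}
\]
that retains only the source pointed curve (and forgets the extra branch-point markings). Since a map $C\to\PP^1$ with prescribed ramification at the $n$ marked points of $C$ is unique up to the $\Gm$-action on the target, the image of $\pi$ restricted to the smooth-source locus is exactly ${\sf D}_g(A)$.

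Applying functoriality of tropicalization for toroidal Deligne--Mumford stacks~\cite{ACP,U15}, I obtain a commutative square
\[
\begin{tikzcd}
\overline{\mathcal H}_g(A)(K)\arrow{r}{\trop}\arrow[swap]{d}{\pi} & \Sigma(\overline{\mathcal H}_g(A))\arrow{d}{\Sigma(\pi)}\\
\overline{\mathcal M}_{g,n}(K)\arrow{r}{\trop} & \mathcal M_{g,n}^{\trop}.
\end{tikzcd}
\]
For $K$ a complete non-archimedean field with value group all of $\RR$, both horizontal arrows are surjective: this is the content of the semistable reduction theorems recalled in Section~\ref{sec: tropicalizations}. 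Given a point $\sigma$ of the cone complex $\Sigma(\overline{\mathcal H}_g(A))$, I can lift it to an $R$-point of $\overline{\mathcal H}_g(A)$ whose generic fiber has smooth source (one can always perturb the relevant deformation parameters to units while preserving the specified valuations), and the image of this $K$-point of ${\sf D}_g(A)$ in $\mathcal M_{g,n}^{\trop}$ is precisely $\Sigma(\pi)(\sigma)$. Thus $\trop({\sf D}_g(A))$ equals the set-theoretic image of $\Sigma(\pi)$.

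The remaining step is a statement about polyhedral geometry: the image of a generalized cone complex under an integer-linear, cone-wise map is the support of a cone complex, since each cone maps to a rational polyhedral cone and there are finitely many cones. The main technical obstacle is that $\Sigma(\pi)$ need not be injective on cones, nor send cones isomorphically onto their images; but since we claim only that the \emph{support} of the image is a cone complex, this is absorbed by taking a common refinement of the images of faces. A subsidiary obstacle is ensuring that the ``smooth-source'' lifting in the previous paragraph is always possible; this is exactly where log smoothness of $\overline{\mathcal H}_g(A)$ enters, allowing smoothings of any combinatorial type to actual Hurwitz covers over $R$ with smooth generic fiber.
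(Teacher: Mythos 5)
Your route is genuinely different from the paper's. The paper disposes of this proposition in one line by citing the general structure theory of tropicalizations of subvarieties of toroidal embeddings~\cite{U15}: ${\sf D}_g(A)$ is a closed subvariety of $\mathcal M_{g,n}$, and the extended Bieri--Groves-type theorem there says directly that its image under $\mathsf{trop}$ is the support of a rational cone complex, with no need to exhibit any particular polyhedral structure. You instead realize $\trop({\sf D}_g(A))$ as the image of the cone complex of a proper, logarithmically smooth cover space under the tropicalized forgetful map, and then quote elementary polyhedral geometry. This is more work but is closer in spirit to how the paper actually uses the space later (the ``appending long trees'' argument and the modifiability theorem both trade on properness and log smoothness of admissible covers), and it gives slightly more: an identification of $\trop({\sf D}_g(A))$ with the image of the tropical admissible-cover space, not just abstract polyhedrality.

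Two points need repair before the argument is complete. First, you write ``logarithmic stable maps, or equivalently admissible Hurwitz covers'' and then invoke logarithmic smoothness; but these two compactifications are not equivalent, and only the admissible-cover space (with all branch points marked, as in~\cite{ACV,HM82}) is log smooth --- the failure of log smoothness of the relative/log stable map space is precisely the source of the realizability problem this paper studies. Your lifting step (surjectivity of $K$-points of the open locus onto the cone complex, with smooth generic source) genuinely requires log smoothness, so you must run the argument with admissible covers; as stated, the ``equivalently'' hides the key hypothesis. Second, the image of the smooth-source locus of a single admissible-cover space, with a fixed auxiliary branching profile (say simple branching away from $0,\infty$), need not be all of ${\sf D}_g(A)$: the rational function with divisor $\sum a_ip_i$ on a given pointed curve is unique up to scalar, and its extra branch points may be non-simple, so such curves are missed. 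The fix is cheap --- Riemann--Hurwitz bounds the auxiliary profiles, so take the finite union of the admissible-cover spaces over all of them; a finite union of images of rational cones under integer-linear maps still supports a cone complex after a common refinement --- but it should be said. With these two adjustments your proof is correct.
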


\begin{proof}
Formal from~\cite{U15}. 
\end{proof}

Denote the tropicalization by ${\sf D}_g^{\sf trop}(A)$ and choose any cone complex structure on it. By toroidal geometry, this subcomplex of $\mathcal M_{g,n}^{\sf trop}$ determines an open subset of a toroidal modification of of the moduli space of curves $\Mbar_{g,A}$. By the basic theorems of tropical compactifications~\cite{Tev07,U15}, we have the following. 

\begin{proposition}
The closure of ${\sf D}_g(A)$ in  $\mathcal M_{g,A}$ is proper. Moreover, this closure meets the toroidal strata of $\mathcal M_{g,A}$ in the expected dimension. 
\end{proposition}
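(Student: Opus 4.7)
The plan is to deduce both claims directly from the general theory of tropical compactifications developed by Tevelev~\cite{Tev07}, adapted to toroidal embeddings by Ulirsch~\cite{U15}. The critical input is that the cone complex structure on $\mathcal M_{g,n}^{\sf trop}$ was chosen precisely so that ${\sf D}_g^{\sf trop}(A)$ is a union of its cones (not merely a closed subset of the support), and the modification $\mathcal M_{g,A}$ is the open subset corresponding to those cones that meet the tropicalization.

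For properness, I would appeal to the tropical compactification theorem, which says that if $Y$ is a locally closed subvariety of a toric variety (or, in Ulirsch's generalization, of a toroidal embedding) and one passes to a subdivision whose support contains $\trop(Y)$, then the closure of $Y$ in the open toroidal subscheme cut out by the cones meeting $\trop(Y)$ is proper over the base, even though the ambient open subscheme need not be. Applied to $Y = {\sf D}_g(A)$ in the toroidal embedding $\mathcal M_{g,n} \subset \Mbar_{g,n}$, with the subdivision and open subset $\mathcal M_{g,A}$ fixed in the previous discussion, the hypotheses are exactly what the preceding proposition provides; properness of the closure then follows formally.

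For the expected-dimension claim, I would invoke the proper-intersection aspect of the same body of results: for each cone $\sigma$ of $\mathcal M_{g,n}^{\sf trop}$ lying in ${\sf D}_g^{\sf trop}(A)$, the closure of ${\sf D}_g(A)$ meets the corresponding toroidal stratum $O_\sigma$ in the expected codimension $\dim \sigma$. Combinatorially, the cones of ${\sf D}_g^{\sf trop}(A)$ parametrize types of realizable balanced functions with ramification profile $A$, and the output is that each toroidal stratum of $\mathcal M_{g,A}$ meets the closure in the dimension predicted by this cone complex.

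The main technical obstacle I anticipate is translating Tevelev's and Ulirsch's arguments from varieties and schemes to the Deligne--Mumford stack $\Mbar_{g,n}$. Because the relevant modifications produce toroidal DM stacks and the tropical compactification theorems are essentially \'etale-local assertions about toric compactifications, this extension should proceed without change of substance; still, some care is needed to ensure that automorphisms of pointed curves and the orbifold structure do not introduce new subtleties along the boundary.
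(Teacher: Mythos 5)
Your proposal is correct and follows essentially the same route as the paper, which deduces the statement directly from the standard tropical compactification results of Tevelev~\cite{Tev07} and Ulirsch~\cite{U15}, applied after choosing a cone complex structure so that ${\sf D}_g^{\sf trop}(A)$ is a subcomplex and $\mathcal M_{g,A}$ is the corresponding toroidal modification. Your additional remarks on the stack-theoretic extension are reasonable but not treated separately in the paper, which regards the application as formal.
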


We denote the closure by $\overline {\sf D}_g(A)$. 

We consider compactifications on the tropical side. Every cone complex $\Sigma$ admits a canonical compactification~\cite{ACP}. Every cone $\sigma$ can be expressed as $\Hom(P,\RR_{\geq 0})$ where $P$ is a finitely generated monoid. By replacing $\RR_{\geq 0}$ with $\RR_{\geq 0}\cup\{\infty\}$ with the obvious monoid structure, one compactifies the cone. Gluing the compactified cones together, we obtain a compactification
\[
\Sigma\hookrightarrow\overline\Sigma. 
\]
The complement of $\Sigma$ decomposes into a disjoint union of cone complexes, called the {\it infinite faces}. See~\cite[Section~2]{ACP} for further details. 

Now, just as $K$-points of ${\sf D}_g(A)$ map onto the cone complex ${\sf D}_g^{\sf trop}(A)$, the $K$-points of the compactification space $\overline{\sf D}_g(A)$ map onto a compactification $\overline{\sf D}_g^{\sf trop}(A)$, so we have:
\[
\begin{tikzcd}
 {\sf D}_g(A)(K)\arrow[hookrightarrow]{r}\arrow{d}& \overline {\sf D}_g(A)(K)\arrow{d}\\
 {\sf D}_g^{\sf trop}(A)\arrow[hookrightarrow]{r}& \overline{\sf D}_g^{\sf trop}(A).
\end{tikzcd}
\]

We will use this diagram to complete the proof of our genus $2$ results as follows. First, we will argue that certain points at infinity lie in the image of the right vertical map. We will then deduce the existence of points in the interior formally. 

\begin{remark}
One can also deduce the results by using properties of tropicalization over rank $2$ valued fields, but this relies on results that we are unable to locate appropriate references for. 
\end{remark}

\subsubsection{Realizability in the infinite faces}
We now prove Theorem~\ref{thm: long-scaffold}.

\begin{proof}
Let $F\colon \Gamma\to\RR$ be a balanced function and let $\Gamma_0$ be an allowable frame. Let $e_1,\ldots,e_m$ be the contracted edges connecting the frame to the forest, and let $T_1,\ldots,T_m$ be the trees comprising this forest. Viewing the half-edge of $e_i$ that is incident to $T_i$, we obtain a balanced map $T_i\to\RR$ in which this ray is contracted.

Consider the restriction $\Gamma_0\to \RR$ and shrink all bounded edges of $\Gamma_0$ to length $0$. The result is a graph $\Gamma_0^\star$ consisting of a unique vertex, which we label with genus equal to the genus of $\Gamma_0$. Attach all the contracted connecting edges to this vertex. The resulting tropical map is realizable, for example by closedness of the realizable locus.

By varying the lengths of $e_1,\ldots,e_m$, we obtain a cone of realizable tropical curves and thereby a cone in $\overline{\sf D}_g^{\sf trop}(A)$. Since the latter is closed, the point on the face at infinity corresponding to setting all these edge lengths equal to infinity is also realizable. Consider the map\footnote{Strictly speaking, we should allow the target to be compactified as well. However, since the infinite edges are all contracted, this is unnecessary.} 
\[
\Gamma^\infty \to \RR
\]
obtained by setting the lengths of the $e_i$ equal to infinity.

We claim that $\Gamma^\infty\to\RR$ is realizable, in the sense that it lies in the image of the tropicalization map above. Recall from~\cite{ACP,R16} that infinite edges correspond algebraically to tropicalizations of families of maps over valued fields with nodal domains: the nodes of the domain curve appearing in the generic fibers correspond to the infinite edges.

To construct a realization of $\Gamma^\infty\to\RR$, begin with a realization $C_0\to\PP^1$ of $\Gamma_0\to\RR$. Choose realizations $C_i\to\PP^1$ of the trees $T_i\to\RR$ connecting to $\Gamma_0$, with distinguished marked points $q'_i$ corresponding to the edges $e_i$. For each $i$, choose a point $q_i$ on $C_0$ such that under the tropicalization map
\[
C_0(K)\to\Gamma_0,
\]
the point $q_i$ maps to the attaching point of $e_i$ on the frame $\Gamma_0$.

Form a nodal curve $C$ by identifying $q_i$ with $q'_i$ for each $i$. After possibly scaling the maps $C_i\to\PP^1$, the points $q_i$ and $q'_i$ map to the same point of $\PP^1$. This yields an induced map
\[
C\to\PP^1,
\]
which by construction realizes $\Gamma^\infty\to\RR$.

We have therefore shown that the point of $\overline{\sf D}_g^{\sf trop}(A)$ corresponding to $\Gamma^\infty\to\RR$ is realized. Since $\overline{\sf D}_g^{\sf trop}(A)$ is a compactified cone complex, there must exist a cone in ${\sf D}_g^{\sf trop}(A)$ whose closure contains this face at infinity. The theorem follows.
\end{proof}

\section{Generalizations via dimensional reduction}\label{sec: bootstrap}

We have thus far examined maps $f\colon \Gamma\to\RR$. However, one is often interested in maps to higher-dimensional vector spaces. There is a simple bootstrapping technique that allows one to deduce statements about maps to $\mathbb R^r$ from statements about balanced functions. 

\subsection{Setup}
We recall some basic terminology. Let $\Gamma$ be a tropical curve. A {\it tropical map} $\Gamma\to\RR^r$ is a map such that for any linear projection $\RR^r\to\RR$, the induced map
\[
\Gamma\to\RR^r\to\RR
\]
is balanced. It is sufficient to check this for the coordinate projections with respect to any coordinate system. 

Recall that a balanced map $f\colon \Gamma\to \RR^r$ has a {\it type}. Loosely, this is the data obtained by discarding the metric data but retaining all other discrete data associated to $f$. Formally, it consists of the following:
\begin{enumerate}[(i)]
\item The marked graph $G$ underlying $\Gamma$,
\item For each directed edge $e$ of $G$, the {\it edge direction} in $\RR^r$, or equivalently, for every linear projection $\RR^r\to \RR$, the slope of the composite along $e$,
\item For each marked ray $\ell$ of $G$, its {\it edge direction}, or equivalently, for every linear projection $\RR^r\to \RR$, the slope of the composite along $\ell$, directed away from the root of the ray. 
\end{enumerate}

If we fix the type $\Theta$, there is a space parameterizing tropical curves with a balanced map to $\RR^r$ having this type. It is naturally isomorphic to $\sigma^\circ\times \RR^r$, where $\sigma$ is a rational polyhedral cone and $\sigma^\circ$ is its interior. The cone $\sigma$ should be viewed as a conical subset
\[
\sigma\subset\RR_{\geq 0}^E,
\]
where $E$ is the edge set of $G$, consisting of those assignments of edge lengths that support a continuous piecewise linear map of type $\Theta$. In the interior, there is the additional condition that the edge lengths are positive. 

It is slightly more convenient to work with the space of maps {\it up to translation}. Given a tropical curve $\Gamma$ with underlying graph $G$ and a compatible type $\Theta$, the graph $\Gamma$ may or may not admit a map to $\RR^r$ of type $\Theta$, and if it does, there is an $\mathbb R^r$-torsor worth of such maps. 

It follows from the main result of~\cite{R16} that there is a closed conical subset $\mathsf M^{\sf real}_\Theta$ parameterizing those maps (again, up to translation) that are realizable. That is, they arise from tropicalizing a map
\[
\mathcal C\dashrightarrow \mathbb G_m^r
\]
that is regular outside a marked set of points $p_1,\ldots,p_n$ and defined over a non-archimedean field $K$. The tropicalization procedure is identical to the one presented in Section~\ref{sec: trop-rsm}, applied coordinatewise.

\subsection{The dimensional reduction} We now assume that the marked graph $G$ underlying $\Theta$ is trivalent with genus $0$ vertices -- the {\it maximally degenerate} case. Fix a linear projection $\chi\colon \mathbb R^r\to \mathbb R$. There is an associated type $(\Theta,\chi)$, which is the type of a map to $\RR$, obtained from a map of type $\Theta$ by projecting. We therefore obtain a natural realizable subset
\[
\mathsf M^{\sf real}_{\Theta,\chi}\subset \RR_{\geq 0}^E
\]
comprising those prescriptions of edge lengths such that the resulting map is realizable. 

Fix a splitting of the torus $\mathbb G_m^r$ corresponding to basis $\chi_1,\ldots,\chi_r$ of the character lattice. Since we can compose a realization of $[F\colon \Gamma\to\RR^r]$ with the character $\chi_i$ to produce a realization of $[\chi_i\circ F]$ there is a containment:
\[
\mathsf M^{\sf real}_{\Theta} \subset \bigcap_{i=1}^r \mathsf M^{\sf real}_{\Theta,\chi_i}.
\]
The following statement allows us to conclude the reverse containment under some circumstances. The result is, in some sense, implicit in~\cite{JR17}. 

Recall that $g$ denotes the genus of the marked graph $G$, which by assumption is equal to the first Betti number of $G$.

\begin{theorem}
Let $p$ be a point of $\bigcap_{i=1}^r \mathsf M^{\sf real}_{\Theta,\chi_i}$ and assume that $\bigcap_{i=1}^r \mathsf M^{\sf real}_{\Theta,\chi_i}$ has codimension $rg$ in a neighborhood of $p$. Then $p$ lies in  $\mathsf M^{\sf real}_{\Theta}$.
\end{theorem}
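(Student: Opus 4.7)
The plan is to reformulate the theorem via the double ramification locus on $\mathcal M_{g,n}$, and then apply a lifting principle for proper tropical intersections. For each character $\chi_i$, extract from the type $\Theta$ a ramification vector $A_i \in \mathbb Z^n$ of slopes along the marked rays, necessarily of vanishing sum. A smooth pointed curve $(C,p_1,\dots,p_n)$ over a non-archimedean valued field $K$ admits a rational map $C \dashrightarrow \mathbb G_m^r$ whose tropicalization has type $\Theta$ if and only if $(C,p_1,\dots,p_n)$ lies in the multi-double ramification locus $\bigcap_{i=1}^r \mathsf D_g(A_i) \subset \mathcal M_{g,n}$. Through the tropicalization map of Section~\ref{sec: tropicalizations}, this identifies $\mathsf M^{\sf real}_{\Theta,\chi_i}$ with the portion of $\mathsf{trop}(\mathsf D_g(A_i))$ lying over the cone of $\Theta$, and $\mathsf M^{\sf real}_\Theta$ with the corresponding portion of $\mathsf{trop}\bigl(\bigcap_i \mathsf D_g(A_i)\bigr)$.

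With this translation, the tautological inclusion
\[
\mathsf{trop}\Bigl(\bigcap_{i=1}^r \mathsf D_g(A_i)\Bigr) \;\subseteq\; \bigcap_{i=1}^r \mathsf{trop}\bigl(\mathsf D_g(A_i)\bigr)
\]
recovers $\mathsf M^{\sf real}_\Theta \subseteq \bigcap_i \mathsf M^{\sf real}_{\Theta,\chi_i}$ already noted in the setup. Each algebraic factor $\mathsf D_g(A_i)$ is equidimensional of codimension $g$ in $\mathcal M_{g,n}$, so by Bieri--Groves each $\mathsf{trop}(\mathsf D_g(A_i))$ has tropical codimension $g$. The hypothesis that the right-hand side has codimension exactly $rg$ at $p$ then says that the $r$ tropical factors meet \emph{properly} at $p$: their codimensions add to the codimension of their intersection.

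The decisive step is a lifting principle: whenever tropicalizations of algebraic subvarieties meet properly at a point, that point lies in the tropicalization of the algebraic intersection. In the torus setting this is the theorem of Osserman--Payne; in the toroidal setting of $\overline{\mathcal M}_{g,n}$, I would argue exactly as in Section~\ref{sec: genus 2}: compactify the ambient cone inside $\overline{\mathcal M}_{g,n}^{\sf trop}$, pass to a toroidal modification of $\overline{\mathcal M}_{g,n}$ refining the tropical intersection, and apply the transversality and surjectivity statements of Tevelev and Ulirsch~\cite{Tev07,U15}. Proper tropical intersection at $p$ translates into the strict transforms of the $\mathsf D_g(A_i)$ meeting the toroidal stratum over $p$ in the expected codimension, and a $K$-point of their intersection then tropicalizes to $p$, putting $p$ in $\mathsf M^{\sf real}_\Theta$.

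The main obstacle is making the last step precise. A naive lift of a point from the tropical intersection to an algebraic one is not automatic, because algebraic excess intersection among the $\mathsf D_g(A_i)$ could in principle leave some tropical points unrealized. The codimension $rg$ hypothesis is exactly what rules this out: tropical properness forces the algebraic intersection to also achieve the expected codimension at any lift of $p$, which in turn lets the toroidal compactification machinery produce the lift by the same argument used to place points at infinity in Section~\ref{sec: genus 2}. This is the only technically delicate part of the proof and is the strategy implicit in~\cite{JR17}.
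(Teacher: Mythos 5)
Your proposal follows essentially the same route as the paper: identify $\mathsf M^{\sf real}_{\Theta,\chi_i}$ and $\mathsf M^{\sf real}_{\Theta}$ with tropicalizations of the double ramification loci $\mathsf D_g(A_i)$ and of $\mathsf D_g(A)=\bigcap_i\mathsf D_g(A_i)$, note that each $\mathsf D_g(A_i)$ has codimension $g$ by Riemann--Hurwitz, and conclude from the codimension-$rg$ hypothesis via a lifting theorem for proper tropical intersections. The only difference is one of emphasis: where you sketch a toroidal-modification argument to justify lifting outside the torus setting, the paper simply invokes the lifting results of Osserman--Payne and related references, so your extra care on that point is compatible with, and if anything more explicit than, the paper's treatment.
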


\begin{proof}
We have fixed $\Theta$. Let $A = (c_{ij})$ be the $r\times n$ matrix recording the slopes $c_{ij}$ of the characters $\chi_i$ along the marked rays $p_j$. As discussed in the previous section, there is an associated locus
\[
\mathsf{D}_g(A)\subset\mathcal M_{g,n}
\]
in the moduli space of smooth pointed curves comprising those curves that admit a rational map to $\mathbb G_m^r$, regular away from the marked points, such that the vanishing or pole order of the character $\chi_i$ along the marked point $p_j$ is exactly $c_{ij}$. Note that if a curve admits such a map, it admits a unique one up to $\mathbb G_m^r$-translation. 

Now, if $K$ is a valued field, there are natural maps on $K$-points:
\[
\mathsf{D}_g(A)(K)\subset\mathcal M_{g,n}(K)\xrightarrow{\sf trop} \mathcal M_{g,n}^{\sf trop},
\]
sending a map $[f\colon \mathcal C\dashrightarrow\mathbb G_m^r]$ to the domain curve of its tropicalization.\footnote{Since both algebraic and tropical maps of this form are unique up to the appropriate translation when they exist, there is no loss of information in remembering only the underlying curve.}

There is also a moduli map $\mathbb R_{\geq0}^E\to \mathcal M_{g,n}^{\sf trop}$. The locus $\mathsf M^{\sf real}_{\Theta}$ is the preimage under this map of the tropicalization ${\sf trop}(\mathsf{D}_g(A)(K))$. As usual, we compress the notation to ${\sf trop}(\mathsf{D}_g(A))$, since the precise choice of field $K$ will not play a significant role. 

Let $A_i$ be the matrix obtained from $A$ via the projection $\chi_i\colon\RR^r\to\RR$. Analogously, we may identify $\mathsf M^{\sf real}_{\Theta,\chi_i}$ with the tropicalization of $\mathsf{D}_g(A_i)$. It is certainly true that
\[
\mathsf{D}_g(A) = \bigcap_{i=1}^r \mathsf{D}_g(A_i).
\]
Thus the proposition becomes a statement about when tropicalization commutes with intersection. By general results on tropical intersections, these two operations commute whenever the intersection is locally of the expected dimension~\cite{OP}; see also~\cite{He19,OR,R12}. 

The statement of the theorem then follows from a Riemann--Hurwitz computation: the spaces $\mathsf{D}_g(A_i)$ have codimension $g$ in $\mathcal M_{g,n}$.
\end{proof}

For example, by applying the theorem above, the $r = 1$ case of the realizability of Speyer's theorem, proved in Section~\ref{sec: genus 1}, implies the general case of well-spacedness for trivalent domains. Similarly, the genus $2$ results for $r=1$ imply results in higher dimensional targets. However, one should note that there exist more exotic forms of non-realizability for tropical curves of higher genus. See recent work of Koyama~\cite{Koy23}. 

It should be possible to modify the theorem to work without the maximal degeneracy hypothesis. However, our goal is to show the utility of the methods, we do not go into this here.

\bibliographystyle{siam} 
\bibliography{HurwitzRealizability} 

\end{document}